\documentclass[11pt]{article}
\usepackage[T1]{fontenc}
\usepackage{lmodern,amsmath,amssymb,amsthm,mathtools}
\usepackage[margin=1in]{geometry}
\usepackage{booktabs,array,longtable,microtype,enumitem,xurl,needspace}
\usepackage[colorlinks=true,linkcolor=blue!45!black,citecolor=blue!45!black,urlcolor=blue!45!black]{hyperref}
\usepackage{xcolor}
\hypersetup{pdftitle={Tiling without a subspace complement: a sharp counterexample in dimension nine over the field with two elements},pdfauthor={Samuel J. Ferguson and Joshua Iosevich},pdfsubject={Spectral tiles without subspace complements and the sharp dimension threshold}}
\setlist{itemsep=3pt,topsep=4pt}
\numberwithin{equation}{section}
\newtheorem{theorem}{Theorem}[section]
\newtheorem{lemma}[theorem]{Lemma}
\newtheorem{proposition}[theorem]{Proposition}
\newtheorem{corollary}[theorem]{Corollary}
\theoremstyle{definition}
\newtheorem{definition}[theorem]{Definition}

\newtheorem{question}[theorem]{Question}
\theoremstyle{remark}
\newcommand{\F}{\mathbb F_2}
\newcommand{\wt}{\operatorname{wt}}
\newcommand{\supp}{\operatorname{supp}}
\newcommand{\spanof}{\operatorname{span}}
\newcommand{\W}{\mathcal W}
\newcommand{\wsp}{w_{\mathrm{sp}}}
\newcommand{\one}{\mathbf1}
\title{\Large\bfseries Tiling without a subspace complement: a sharp counterexample\\
in dimension nine over the field with two elements}
\author{Samuel J. Ferguson \and Joshua Iosevich}
\date{29 October 2026}
\begin{document}
\maketitle
\begin{abstract}
We construct an explicit 64-point subset of $\mathbb F_2^9$ which tiles by translation but has no linear-subspace tiling complement. Together with our earlier dimension-eight theorem and a product argument, this shows that every nonempty tile in $\mathbb F_2^d$ has a linear-subspace complement precisely when $d\le8$. The example is spectral, so it is not a counterexample to Fuglede's conjecture. Its construction uses six three-dimensional subspaces of $\mathbb F_2^6$, specified by a small table. We classify all its tiling complements and spectra: there are exactly 512 complements and 64 spectra, with one translation class of each. Every spectrum has normalized Wiener norm $15/8$. An appendix proves that Goyal's related construction is affinely inequivalent but has the same tiling complements and spectra. The construction and both classifications are proved from first principles. We also give elementary proofs of the positive assertion in dimensions at most seven and for all tile sizes other than sixteen in dimension eight. The remaining case uses our previously established theorem based on the classification of perfect binary codes of length fifteen.
\end{abstract}
\setcounter{tocdepth}{1}{\small\tableofcontents}
\clearpage

\section{Introduction}
A subset of a finite group tiles if its translates partition the group. A particularly simple way for this to happen in a vector space is to choose one representative from each coset of a subspace. The subspace is then a tiling complement. Must every tile arise in this way, perhaps using a different complement from the one first supplied?

This paper gives a concrete negative answer in dimension nine over the field with two elements. We exhibit a set which has many tiling complements, but none is a subspace. More strongly, every complement is a translate of one fixed eight-point configuration. We also determine every spectrum of the set, where a spectrum is an orthogonal basis of sign-valued functions obtained from linear functionals. These notions will be defined before they enter the proofs.

The example separates two questions. The finite version of Fuglede's conjecture asks whether tiling and spectrality are equivalent. It originates in Fuglede's 1974 work on orthogonal exponential functions and translation tilings~\cite{Fuglede}. Our example has \emph{both} properties. It disproves the stronger requirement that a tile, or a spectral set, must admit a linear-subspace complement. It therefore does not settle the spectral-to-tiling implication in dimension nine.

The precise dimension statement is the following.
\begin{theorem}[Sharp dimension threshold]\label{thm:threshold}
For a positive integer $d$, every nonempty translational tile in $\F^d$ has a linear-subspace tiling complement if and only if $d\le8$.
\end{theorem}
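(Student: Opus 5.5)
The proof has two directions, each assembled from pieces the abstract announces.

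For the ``only if'' direction, I would first take the explicit 64-point set $A\subset\F^9$ from the main construction. Its tiling complements will be classified: there are exactly 512 of them, all translates of one fixed eight-point configuration $B_0$. A translate $B_0+t$ can be a linear subspace only if it contains $0$ and is closed under addition. It therefore suffices to check that $B_0$ is not an affine subspace, i.e.\ that $B_0$ is not closed under the ternary sum $x+y+z$. That settles $d=9$.

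To pass to $d\ge10$, I would use a product argument. Take $A'=A\times\{0\}\subset\F^9\times\F^{d-9}$, which tiles with complement $B\times\F^{d-9}$. Now suppose $A'\oplus W=\F^d$ for some linear subspace $W$. Let $\pi\colon\F^d\to\F^{d-9}$ be the projection onto the last coordinates, and put $W_0=W\cap\ker\pi$, viewed as a subspace of $\F^9$. Because $A'\subset\ker\pi$, every element of $\ker\pi$ is written uniquely as $a+w$ with $w\in W\cap\ker\pi$. Hence $A\oplus W_0=\F^9$, and $W_0$ would be a linear complement of $A$, which is impossible. The same argument works for any tile $A$ without a linear complement, so non-existence propagates upward in dimension.

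For the ``if'' direction ($d\le8$), the positive result is the earlier dimension-eight theorem. Any tile $A\subset\F^d$ with $d\le8$ can be embedded as $A\times\{0\}\subset\F^8$; it tiles there with complement $B\times\F^{8-d}$. The theorem then gives a linear complement $W\subset\F^8$. Restricting to $\F^d\times\{0\}$ exactly as above yields a linear complement $W\cap(\F^d\times 0)$ of $A$ in $\F^d$. So the whole positive side reduces to dimension eight.

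In dimension eight I would split by $|A|=2^k$. The cases $k\le3$ and $5\le k\le 8$ get elementary arguments. For $|A|\le 4$ or $|B|\le4$, small tiles are handled by hand, using the duality $A\oplus B$, where either factor may be put into a normal form. The case $|A|=16$ in $\F^8$ is the genuinely hard one, and I would simply cite our previously established theorem for it, which rests on the classification of perfect binary codes of length fifteen.

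The main obstacle is the $d=9$ counterexample itself. The difficulty is establishing that every complement of $A$ is a translate of $B_0$, or at least that none is a linear subspace. I would attack this through the Fourier characterization. A set $B$ with $|B|=8$ tiles with $A$ if and only if the zero sets of $\widehat{1_A}$ and $\widehat{1_B}$ cover $\F^9\setminus\{0\}$. First I would compute $Z(\widehat{1_A})$ from the six three-dimensional subspaces of $\F^6$. Then I would show that no $3$-dimensional subspace $W$ has $W^\perp\setminus\{0\}\subset Z(\widehat{1_A})$. This last condition is exactly the criterion for a linear complement, because $\widehat{1_W}$ vanishes precisely off $W^\perp$.
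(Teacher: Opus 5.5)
Your reductions are sound, and in places simpler than the paper's. Padding to $A\times\{0\}$ and intersecting a linear complement with $\F^9\times\{0\}$ (the restriction step of Lemma~\ref{lem:small-tile-elementary}) is a valid replacement for Lemma~\ref{lem:product}, which instead uses $E\times\F^{d-9}$ and projects the complement onto the first factor. The same step correctly reduces $d\le7$ to $d=8$, as the paper notes after Lemma~\ref{lem:product}. It does, however, send for example sixteen-point tiles of $\F^7$ through the external sixteen-point theorem, which Corollary~\ref{cor:elementary-positive-range} avoids. Relying on the classification of complements of the example is also what the paper's proof does, so your Fourier plan is not needed. The paper in any case works on the difference side: $G\setminus(E+E)=(S\setminus\{0\})\times\{0\}$ confines every complement to $H\times\{0\}$.

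The gap is in dimension eight. You claim that $k\le3$ and $5\le k\le8$ ``get elementary arguments'', but what you describe only covers $|A|\le4$ or $|B|\le4$, i.e.\ $k\le2$ or $k\ge6$. The remaining cases $|A|=8$ and $|A|=32$ carry nearly all the content of the positive direction.

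For $|A|=8$ you need the fact that every perfect one-error-correcting code of length seven is affine. Lift $A=\{0,a_1,\ldots,a_7\}$ by $F(e_i)=a_i$, show the lifted complement is a coset of a four-dimensional $L\supseteq\ker F$, and push $L$ forward (Lemmas~\ref{lem:perfect-seven-elementary} and~\ref{lem:small-tile-elementary}).

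For an eight-point complement $T$, the bound $d\le8$ must be used essentially, because the nine-dimensional example is itself a tile with an eight-point complement and no linear one. Proposition~\ref{prop:small-partner-elementary} splits by $r=\dim\langle T\rangle$:
\begin{itemize}
\item For $r\le5$, an $(r-3)$-dimensional subspace of $\supp W_T$ (computed on $V=\langle T\rangle$) lifts to a subspace spectrum, using Lemma~\ref{lem:support-plane-elementary} at $r=5$.
\item For $r=6$, either explicit subspaces work or $T\cong C$. In the latter case the at most $2^{8-6}=4$ slices of the tile are affine three-spaces whose nonzero directions lie in $Z$, so they cover at most $28<35$ points of $Z$. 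A missed $z$ then gives a three-space $K$ with $K\cap Z=\{z\}$ that complements every slice.
\item For $r=7$, there are at most two slices, and Lemma~\ref{lem:common-complement} gives a common complement.
\end{itemize}
Without these arguments, or a citation of the full dimension-eight theorem of the earlier preprint rather than only its sixteen-point case, your ``if'' direction is incomplete.
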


The negative direction of Theorem~\ref{thm:threshold} follows from the construction and a product argument. For the positive direction, our earlier preprint~\cite[Theorem~1.1]{FI} proves that tiling, spectrality, and the existence of a linear-subspace complement are equivalent in dimensions seven and eight. Here we give elementary proofs of the tiling implication in all dimensions at most seven and for every tile size other than sixteen in dimension eight. The remaining sixteen-point case is the sole external input to our sharp-threshold proof: Theorem~\ref{thm:external16}, taken from Ferguson--Iosevich~\cite[Lemma~6.5]{FI}, depends on the classification of perfect binary codes of length fifteen. This input completes the low-dimensional positive statement; the dimension-nine construction, its tiling and spectrality, the absence of a subspace complement, and both complete classifications are proved within this paper without it. Apart from that explicitly cited input, the sharp-threshold proof is elementary and self-contained relative to the prerequisites below. Understanding the statement and role of the input does not require reconstructing the classification of perfect codes.

We assume modular arithmetic; finite abelian groups written additively, including subgroups and cosets; and finite-dimensional linear algebra over fields, including vector spaces, linear combinations, bases, dimension, linear maps, kernels, rank--nullity, matrix multiplication, transposes, inverses, and real or complex orthogonality. The counting arguments use finite sums, bijections, parity, and binomial coefficients, together with elementary proof techniques such as contradiction. Relevant algebraic facts are recalled before use. No prior knowledge of Fourier analysis, spectral sets, Fuglede's conjecture, coding theory, or graph theory is required for the construction and classification arguments. We introduce the necessary definitions and prove the Fourier and combinatorial facts as they arise; this is the sense in which those arguments proceed from first principles.

The eight-point configuration underlying the example, and the fact that its pair-sums contain no three-dimensional subspace, come from our earlier example~\cite[Proposition~7.1]{FI}. In that example the domain still has a subspace complement. Our construction combines several subspaces in different layers so that \emph{every} tiling complement is forced into that configuration. The edge argument used there also gives the complete classification here.

For context, Ferguson and Sothanaphan~\cite{FS} studied the finite-field conjecture and established, among other results, a counterexample in $\F^{10}$. Kadowaki's preprint~\cite{Kadowaki} establishes the dimension-seven equivalence and introduces a common-spectrum construction for complements of small sets. Our broader preliminary preprint~\cite{FIpreliminary} contains further dimension-nine reductions. None of these general results is needed to verify the explicit example below. We make no priority claim for the construction or its classification.

Goyal~\cite{Goyal} gives a related spectral tile without a periodic
complement, classifies its tiling complements, and supplies a Lean
formalization. Appendix~\ref{app:comparison} compares the two explicit
domains. They are affinely inequivalent, although their complete families
of tiling complements and spectra coincide in the displayed coordinates.
We prove the spectrum classification for a common class of layer
constructions, which includes both examples. The periodic tiling
consequence for our domain is recorded after its spectral analysis.

We introduce the set in Section~\ref{sec:construction}, establish its tiling and exact difference set in Section~\ref{sec:differences}, and classify its complements in Section~\ref{sec:complements}. Only then do we develop the Fourier facts needed to classify its spectra. The low-dimensional positive results appear after the counterexample has been completely analyzed. A final product argument proves failure in every larger dimension.

\section{The set: six small spaces in eight layers}\label{sec:construction}
\subsection{Vectors over the field with two elements and tiling}
We write $\F=\mathbb Z/2\mathbb Z=\{0,1\}$. Coordinates are added modulo two, so $x+x=0$ and $x-y=x+y$. For example,
\[
 (1,0,1)+(0,1,1)=(1,1,0).
\]
A vector space of dimension $r$ over $\F$ has $2^r$ elements: each vector is a unique sum of a subset of the $r$ vectors in a basis. A \emph{linear subspace} contains zero and is closed under addition. An \emph{affine subspace} is a translate $a+K=\{a+k:k\in K\}$ of a linear subspace. For instance, in $\F^2$, $\{00,10\}$ is a linear subspace and $\{01,11\}$ is an affine subspace not containing zero. Such a translate is also called a \emph{coset} of $K$.

For subsets $A,B$ of a vector space over $\F$, their \emph{sumset} is
\[
 A+B=\{a+b:a\in A,\ b\in B\}.
\]
It records the resulting vectors, without their multiplicities. Thus $A+A$ is also the difference set of $A$. For $A=\{00,10\}$ we have $A+A=A$.

\begin{definition}
Nonempty sets $A,T\subseteq G=\F^d$ \emph{tile by translation}, written $A\oplus T=G$, if each $g\in G$ has exactly one representation $g=a+t$ with $a\in A$ and $t\in T$. We call $T$ a \emph{tiling complement} of $A$.
\end{definition}
The symbol $\oplus$ here asserts unique representation; neither set need be a subspace. For example,
\[
 \{00,10\}\oplus\{00,01\}=\F^2.
\]
Counting the pairs immediately gives $|A||T|=2^d$.

The following criterion is the basic way we will check tiling. A collision between two representations produces a common difference.
\begin{lemma}[Disjoint differences]\label{lem:tiling}
For nonempty $A,T\subseteq\F^d$,
\[
 A\oplus T=\F^d
 \quad\Longleftrightarrow\quad
 |A||T|=2^d\ \text{ and }\ (A+A)\cap(T+T)=\{0\}.
\]
In particular, if $|A|=2^{d-k}$ and $K$ is a $k$-dimensional subspace, then $A\oplus K=\F^d$ exactly when $(A+A)\cap K=\{0\}$.
\end{lemma}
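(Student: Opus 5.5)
The plan is to study the addition map $\sigma\colon A\times T\to\F^d$, $(a,t)\mapsto a+t$, and to prove the two directions separately. The statement $A\oplus T=\F^d$ says exactly that $\sigma$ is a bijection. Since the domain has $|A||T|$ elements and the codomain has $2^d$, being a bijection amounts to $|A||T|=2^d$ together with injectivity of $\sigma$. So the proof reduces to showing that injectivity of $\sigma$ is equivalent to $(A+A)\cap(T+T)=\{0\}$. The finite-set fact used here is that an injection between sets of equal size is a bijection.

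For the forward direction, assume $A\oplus T=\F^d$. The counting remark after the definition already gives $|A||T|=2^d$. Take $v\in(A+A)\cap(T+T)$ and write $v=a+a'=t+t'$. Adding $a'+t$ to both sides and using $x+x=0$ gives $a+t=a'+t'$. Uniqueness of representation then forces $a=a'$ and $t=t'$, so $v=0$. The reverse inclusion $\{0\}\subseteq(A+A)\cap(T+T)$ holds because $A$ and $T$ are nonempty: $0=a+a=t+t$.

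For the reverse direction, assume the two conditions and suppose $a+t=a'+t'$. Then $a+a'=t+t'$ lies in $(A+A)\cap(T+T)=\{0\}$, so $a=a'$ and $t=t'$. Hence $\sigma$ is injective, and by the cardinality hypothesis it is a bijection. This means every $g$ has exactly one representation $g=a+t$.

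For the final clause, take $T=K$. Then $|A||K|=2^{d-k}2^k=2^d$ holds automatically. Because $K$ is a subspace, it contains $0$ and is closed under addition, so $K+K=K$. The criterion therefore becomes $(A+A)\cap K=\{0\}$.

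No step is a real obstacle. The only points needing care are the use of $x-y=x+y$ over $\F$, which lets a collision be rewritten as a common difference, and the observation that $0$ always lies in the intersection because both sets are nonempty.
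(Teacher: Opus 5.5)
Your proof is correct and follows essentially the same route as the paper: you rewrite a collision $a+t=a'+t'$ as a common difference $a+a'=t+t'$, use injectivity together with the count $|A||T|=2^d$ to get a bijection, and specialize to $K+K=K$ with $|K|=2^k$ for the final clause. You also state explicitly that $0$ lies in $(A+A)\cap(T+T)$ because $A$ and $T$ are nonempty, which the paper leaves implicit.
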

\begin{proof}
An equality $a+t=a'+t'$ is equivalent to $a+a'=t+t'$. If the common vector must be zero, then $a=a'$ and $t=t'$, so all $|A||T|$ sums are distinct. When that number is $2^d$, they exhaust the group. Conversely, unique representation forbids any nonzero common difference, and gives the stated count. Finally, a subspace satisfies $K+K=K$ and $|K|=2^k$.
\end{proof}

\subsection{The eight-point configuration}
Put $H=\F^6$, let $e_1,\ldots,e_6$ be its coordinate basis, and put $j=e_1+\cdots+e_6$. Define
\begin{equation}\label{eq:C}
 C=\{0,e_1,e_2,e_3,e_4,e_5,e_6,j\},\qquad S=C+C.
\end{equation}
The \emph{Hamming weight} $\wt(x)$ of a vector over $\F$ is the number of its coordinates equal to one. For example, $\wt(1,0,1)=2$. Direct addition of the points of $C$ gives
\begin{equation}\label{eq:SZ}
 S=\{x\in H:\wt(x)\in\{0,1,2,5,6\}\},\qquad
 Z=H\setminus S=\{x:\wt(x)\in\{3,4\}\}.
\end{equation}
Indeed, its nonzero pair-sums are $e_i$, $e_i+e_k$ for $i\ne k$, $j+e_i$, and $j$. The number of vectors of weight $k$ is $\binom6k$, because one chooses which $k$ coordinates are one. Hence
\[
 |S|=1+6+15+6+1=29,\qquad |Z|=20+15=35.
\]

We use short integer labels for vectors in the next table. Precisely, the integer $\sum_{i=1}^6x_i2^{i-1}$ denotes $(x_1,\ldots,x_6)$. Thus $7=e_1+e_2+e_3$, $28=e_3+e_4+e_5$, and $63=j$. Addition of these labels means coordinatewise addition modulo two, \emph{not ordinary integer addition}: for example $7+28=27$ in this notation.

\subsection{A complete description of the domain}
Table~\ref{tab:spaces} specifies six subspaces of $H$. In each row the first three entries are a basis $a,b,c$; the next four entries give all the remaining nonzero vectors. Thus the table contains every vector needed to check the construction by hand.
\begin{table}[ht]
\centering
\begin{tabular}{c|rrr|rrr|r}
\toprule
Space & $a$&$b$&$c$&$a+b$&$a+c$&$b+c$&$a+b+c$\\\midrule
$A_0$&7&28&42&27&45&54&49\\
$A_1$&11&28&37&23&46&57&50\\
$A_2$&21&26&35&15&54&57&44\\
$A_3$&22&25&37&15&51&60&42\\
$D_0$&14&19&41&29&39&58&52\\
$D_1$&13&19&38&30&43&53&56\\\bottomrule
\end{tabular}
\caption{Six three-dimensional subspaces, using the integer labels defined above. Add zero to each row to obtain the whole subspace.}\label{tab:spaces}
\end{table}
Each row consists of seven distinct nonzero vectors, so its three proposed basis vectors are independent. Indeed, a dependence would make one of their seven nonempty sums zero. The first three and the last entries in every row have weight three; the middle three entries have weight four.

Let $Y=\F^3$, whose vectors are labeled $0,\ldots,7$ in the same way. In $G=H\times Y=\F^9$, define
\begin{equation}\label{eq:layers}
 (B_0,B_1,B_2,B_3,B_4,B_5,B_6,B_7)
 =(A_0,A_1,A_2,D_0,A_3,D_1,A_0,D_0)
\end{equation}
and
\begin{equation}\label{eq:E}
 \boxed{\displaystyle E=\bigcup_{y\in Y}(B_y\times\{y\}).}
\end{equation}
There are eight disjoint layers, with eight points in each, so $|E|=64$. For integer encoding in nine dimensions, $(x,y)$ is the integer $x+64y$, where this particular expression is ordinary integer arithmetic.

Here are the conclusions to be proved before we discuss lower dimensions.
\Needspace{12\baselineskip}
\begin{theorem}[The explicit example]\label{thm:example}
For the set $E$ in \eqref{eq:E}, the tiling complements are exactly
\[
 (C\times\{0\})+g\qquad(g\in G).
\]
There are 512 distinct such complements, none a linear or affine subspace. The spectra of $E$ are exactly
\[
 (C+a)\times Y\qquad(a\in H).
\]
There are 64 distinct spectra. In each classification there is one translation class and precisely eight sets containing zero. In particular, $E$ is both a tile and a spectral set, but it has no linear-subspace tiling complement.
\end{theorem}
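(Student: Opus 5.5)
The plan is to reduce both classifications to one extremal statement in $H=\F^6$: the only $8$-point sets $P\subseteq H$ with $P+P\subseteq S$ are the translates $C+a$. Tiling will be handled with Lemma~\ref{lem:tiling} and a computation of $E+E$. Spectrality will be handled by computing the Fourier transform of $\one_E$ layer by layer.

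\textbf{Step 1: the difference set.} Write $\pi$ for the projection $G\to Y$. The layer-$0$ part of $E+E$ is $\bigcup_y (B_y+B_y)\times\{0\}$, which is the union of the six subspaces in Table~\ref{tab:spaces}. Reading off the table, the nonzero entries are exactly the $35$ vectors of weight $3$ or $4$: the $42$ entries contain $7$ repetitions such as $15,54,57,42$. Hence this part is $(Z\cup\{0\})\times\{0\}$. For $v\ne0$ in $Y$, the fiber of $E+E$ over $v$ is $\bigcup_{y+y'=v}(B_y+B_{y'})$. I would check from the layer assignment \eqref{eq:layers} that each such fiber is all of $H$. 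The point is that for two distinct $3$-dimensional subspaces $B,B'$, the sumset $B+B'$ is a subspace of dimension at least $4$, and the four pairs over each $v$ should together cover $H$. The conclusion is
\[
E+E=\bigl((Z\cup\{0\})\times\{0\}\bigr)\cup\bigl(H\times(Y\setminus\{0\})\bigr).
\]

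\textbf{Step 2: tiling complements.} By Lemma~\ref{lem:tiling}, $T$ is a complement exactly when $|T|=8$ and $T+T$ meets $E+E$ only in $0$. By Step 1, this forces $T+T\subseteq S\times\{0\}$. So $T$ lies in a single layer, and after translation $T=P\times\{0\}$ with $P+P\subseteq S$.

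The combinatorial core is then to classify such $P$. Translate so $0\in P$; the other seven points then have weights in $\{1,2,5,6\}$. Adding $j$ swaps weights $1,2$ with $5,4$, so after possibly shifting I would normalise to points of weight $1$ or $2$, together with at most one point of weight $5$ or $6$. I would then view the weight-$2$ points as edges on $\{1,\dots,6\}$. The condition that sums avoid weights $3,4$ forces these edges to be pairwise intersecting and to avoid the singletons present. Counting then shows that seven points are reached only by $\{e_1,\dots,e_6,j\}$ or one of its translates. This is the ``edge argument'' cited from~\cite{FI}.

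The count $512=64\cdot8$ follows because $C+C\ne\{0\}$ has no nontrivial stabiliser, so $C+a=C$ forces $a=0$. The sets containing zero are the eight translates $C+c$ with $c\in C$. Finally, none is affine, since $C$ is not closed under sums ($e_1+e_2\notin C$).

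\textbf{Step 3: spectra.} I would introduce characters $\chi_{(\xi,\eta)}(x,y)=(-1)^{\xi\cdot x+\eta\cdot y}$. A short computation gives
\[
\widehat{\one_E}(\xi,\eta)=8\sum_y(-1)^{\eta\cdot y}[\xi\in B_y^\perp].
\]
A set $\Lambda$ with $|\Lambda|=64$ is a spectrum exactly when its nonzero differences lie in the zero set of $\widehat{\one_E}$. I would verify from the table that each annihilator $B_y^\perp\setminus\{0\}$ lies in $Z$. Then for $\xi\in S\setminus\{0\}$ every term vanishes, and at $\xi=0$ the sum vanishes for every $\eta\ne0$. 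This proves $(C+a)\times Y$ is a spectrum.

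For the converse, let $P=\pi_H(\Lambda)$. For each $\xi\in Z$, I would tabulate which $\eta$ give $\widehat{\one_E}(\xi,\eta)\ne0$; such $\eta$ exist, because the annihilators cover $Z$ in a specific pattern. The aim is to show that two points of $\Lambda$ whose $H$-coordinates differ by an element of $Z$ cannot coexist once one accounts for fiber sizes. Fibers have size at most $8$, so $|P|\ge8$. The Step 2 classification then forces $P=C+a$ with all fibers full, so $\Lambda=(C+a)\times Y$.

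\textbf{Main obstacle.} The main obstacle is this last step. Where $\widehat{\one_E}(\xi,\cdot)$ vanishes for some but not all $\eta$, a set with larger $P$ and sparser fibers might a priori survive. I would first try to show that for every $\xi\in Z$ the nonvanishing $\eta$ form a set that meets every difference set of a fiber of size at least $2$. Failing that, I would run a direct case split on the fiber sizes.
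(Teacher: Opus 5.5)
Your tiling half follows the paper's outline, but the completeness half of the spectrum classification has a genuine gap, which you flag yourself. You try to exclude horizontal differences in $Z$ using only the fact that $W_E$ vanishes at differences of $\Lambda$, and this cannot work pair by pair. For instance, $56=J(7)$ lies in $A_0^\perp$ and in none of the other five annihilators, and $A_0$ occupies layers $0$ and $6$. So $W_E(56,\eta)=8\bigl(1+(-1)^{\eta\cdot 6}\bigr)$ vanishes for four of the eight $\eta$. Your concluding step (``the Step~2 classification then forces $P=C+a$'') therefore has no input, because neither $|P|=8$ nor $P+P\subseteq S$ has been established.

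The missing idea is to use the other orthogonality. The $64\times64$ character matrix is square, so orthogonality of its columns implies orthogonality of its rows. Thus $E$ is also a spectrum of $\Lambda$, and $W_\Lambda$ vanishes on $(E+E)\setminus\{0\}$. By your Step~1 this gives $\supp W_\Lambda\subseteq S\times\{0\}\subseteq H\times\{0\}$. A function with Fourier support in $H\times\{0\}$ is invariant under $\{0\}\times Y$ (Lemma~\ref{lem:period}), so $\Lambda=Q\times Y$ with $|Q|=8$, and no case split on fiber sizes is needed.

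Now for distinct $q,q'\in Q$, \emph{every} $(q+q',v)$ with $v\in Y$ is a difference in $\Lambda$. Hence $\sum_y(-1)^{v\cdot y}W_{B_y}(q+q')=0$ for all $v$, and inversion on $Y$ gives $W_{B_y}(q+q')=0$ for each $y$, that is, $q+q'\notin\bigcup_yB_y^\perp$. To conclude $q+q'\in S$ you need this union to be \emph{all} of $Z\cup\{0\}$, not merely contained in it. The paper gets both inclusions from $B^\perp=J(B)$ with $Jx=x+p(x)j$, which maps $Z$ onto $Z$. Then $Q+Q\subseteq S$, and the eight-point classification finishes the proof.

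Some smaller steps also need repair.
\begin{itemize}
\item In Step~1, ``dimension at least $4$'' does not show that each fiber over $v\ne0$ is all of $H$, and some pairs over $v$ are equal subspaces (e.g.\ $B_0=B_6=A_0$). The paper observes $A_i\cap D_k=\{0\}$, so $A_i+D_k=H$. Since the $A$-labels $\{0,1,2,4,6\}$ form a set of odd size, every nonzero $v$ pairs some $A$-layer with some $D$-layer.
\item In Step~2, the normalization to ``at most one point of weight $5$ or $6$'' is unjustified as stated: $C+j$ contains $0$, $j$ and six points of weight $5$. A direct case analysis on weights does go through without it. The paper's cleaner device labels each nonzero element of $S$ by the unique edge of the complete graph on the eight points of $C$ whose endpoints sum to it. It then shows that two labels sum into $S$ exactly when their edges meet, and seven pairwise-meeting edges must form a star.
\item ``$C+C\ne\{0\}$'' does not show that $C$ has no nonzero period. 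Instead, a period $a\ne0$ would split $C$ into four pairs each summing to $a$, contradicting the uniqueness of pair-sums.
\end{itemize}
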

The term spectrum is defined in Section~\ref{sec:fourier}; it plays no role in verifying the tiling or excluding subspace complements.

\section{Why the set tiles, and its exact differences}\label{sec:differences}
The construction has two features. Within each layer, the eight points of $C$ provide a complement. Between suitable different layers, every horizontal difference occurs. This leaves very few possible differences for any tiling complement of $E$.

\begin{lemma}\label{lem:table}
The six spaces in Table~\ref{tab:spaces} satisfy:
\begin{enumerate}
\item each has no nonzero point in $S$;
\item their union is $Z\cup\{0\}$;
\item $A_i\cap D_k=\{0\}$ for $0\le i\le3$ and $0\le k\le1$.
\end{enumerate}
\end{lemma}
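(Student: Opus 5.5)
The proof is a finite verification. I would organize it so that each of the three assertions reduces to comparing short lists of integer labels, and so that the only arithmetic is reading off binary expansions and XORing labels.

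\emph{Step 0 (the table is correct).} I first confirm that each row of Table~\ref{tab:spaces} really lists a subspace. I XOR the labels $a,b,c$ to check that the entries in the columns $a+b$, $a+c$, $b+c$ and $a+b+c$ are the stated sums; for example, in row $A_0$, $7+28=27$ and $7+42=45$. Then each row together with $0$ is the span $\spanof\{a,b,c\}$. Independence was already observed after the table. Next I record the Hamming weight of every label from its binary expansion. This confirms the remark made after the table: the columns $a,b,c,a+b+c$ have weight three and the three middle columns have weight four.

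\emph{Part 1.} This is now immediate from \eqref{eq:SZ}. Every nonzero vector of each space has weight $3$ or $4$, and these weights are excluded from $S$. Hence each nonzero vector lies in $Z$.

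\emph{Part 2.} By Part 1 the union is contained in $Z\cup\{0\}$, so it suffices to show that every element of $Z$ occurs. I would count separately by weight.
\begin{itemize}
\item The weight-four entries are $27,45,54,23,46,57,15,51,60,29,39,58,30,43,53$ once repeats are removed. These are $15=\binom64$ distinct vectors, hence all of weight four.
\item The weight-three entries are $7,28,42,49,11,37,50,21,26,35,44,22,25,14,19,41,52,13,38,56$ once repeats are removed. These are $20=\binom63$ distinct vectors, hence all of weight three.
\end{itemize}
Together they give all $35$ elements of $Z$.

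\emph{Part 3.} The $A_i$ and $D_k$ are subspaces, so a nonzero common vector would appear as a label in both rows. The nonzero labels of $D_0\cup D_1$ are
\[
\{14,19,41,29,39,58,52,13,38,30,43,53,56\}.
\]
I check that none of these appears in rows $A_0$ through $A_3$.

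\emph{Main obstacle.} There is no conceptual obstacle. The only risk is a bookkeeping slip in Step 0 or in the duplicate-removal of Part 2. Repeats do occur, such as $28$, $37$, $42$, $15$, $54$, $57$ among the $A_i$ and $19$ between $D_0$ and $D_1$. So I would sort the labels by weight before counting, so that a missing or doubled vector shows up at once.
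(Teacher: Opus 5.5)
Your proposal is correct and is essentially the paper's proof: a direct check of the table. The paper gets the 35 distinct vectors of weight three or four by removing the seven repeated labels $15,19,28,37,42,54,57$ from the 42 nonzero entries, whereas you sort by weight to exhibit all $20$ weight-three and all $15$ weight-four vectors; Part 3 is the same observation that no $D$-row entry occurs in an $A$-row, your lists and repeats agree with the table, and your Step 0 (re-checking the XOR sums) is an extra verification the paper leaves implicit.
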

\begin{proof}
All their nonzero vectors have weight three or four, proving the first assertion. Among the 42 nonzero entries of the table, exactly the seven numbers
\[
 15,19,28,37,42,54,57
\]
appear twice; every other entry appears once. This gives $42-7=35$ distinct vectors, all in the 35-element set $Z$, proving the second assertion. Finally, none of the entries in either $D$ row occurs in any $A$ row, proving the third assertion. These checks require only the displayed table and the binary expansions of its entries.
\end{proof}

\begin{proposition}\label{prop:tile}
The set $T_0=C\times\{0\}$ is a tiling complement of $E$.
\end{proposition}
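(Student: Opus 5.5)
We will verify the tiling directly with Lemma~\ref{lem:tiling} in $G=\F^9$. The cardinality condition is immediate: $|E|=64$ and $|T_0|=|C|=8$, so $|E||T_0|=512=2^9$. It therefore suffices to show that
\[
 (E+E)\cap(T_0+T_0)=\{0\}.
\]
Since $T_0+T_0=(C+C)\times\{0\}=S\times\{0\}$, only those differences of $E$ whose $Y$-coordinate is zero can meet $T_0+T_0$.

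The first step is to identify these differences. Write two points of $E$ as $(x,y)$ and $(x',y')$ with $x\in B_y$ and $x'\in B_{y'}$. Their sum is $(x+x',y+y')$. This has $Y$-component zero exactly when $y=y'$, because in $Y$ we have $y+y'=0$ only if $y=y'$. In that case both points lie in the same layer, so $x,x'\in B_y$. Each $B_y$ is one of the linear subspaces $A_0,\dots,A_3,D_0,D_1$, so $x+x'\in B_y$. Hence
\[
 (E+E)\cap(H\times\{0\})=\bigcup_{y\in Y}B_y\times\{0\}.
\]

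The second step is to apply Lemma~\ref{lem:table}(1). Each $B_y$ has no nonzero point in $S$, so $B_y\cap S=\{0\}$ for every $y$. Combining this with the first step gives
\[
 (E+E)\cap(S\times\{0\})=\{0\},
\]
and Lemma~\ref{lem:tiling} then yields $E\oplus T_0=G$.

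There is no real obstacle here. The only point needing care is the observation that cross-layer differences have nonzero $Y$-component, so they cannot lie in $T_0+T_0$. Everything else has already been packaged in Lemma~\ref{lem:tiling} and Lemma~\ref{lem:table}(1). Parts (2) and (3) of Lemma~\ref{lem:table} are not needed for this proposition; presumably they enter later, in controlling cross-layer differences for the classification of complements.
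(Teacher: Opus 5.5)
Your proof is correct and is essentially the paper's argument: both rest on Lemma~\ref{lem:tiling} together with the first part of Lemma~\ref{lem:table}. The only difference is bookkeeping. The paper applies the difference criterion inside each layer to get $B_y\oplus C=H$ and then assembles $(x,y)=(b,y)+(c,0)$. You apply the criterion once in $G$, after noting that cross-layer differences have nonzero vertical part. The paper itself argues your way later, in the proof of Theorem~\ref{thm:common-layers}.
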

\begin{proof}
For each $y$, $B_y+B_y=B_y$ because $B_y$ is a subspace. The first assertion of Lemma~\ref{lem:table} gives
\[
 (B_y+B_y)\cap(C+C)=\{0\}.
\]
Both sets have eight elements, so Lemma~\ref{lem:tiling} gives $B_y\oplus C=H$. For $(x,y)\in G$, this determines a unique $b\in B_y$ and $c\in C$ with $x=b+c$, and hence a unique representation $(x,y)=(b,y)+(c,0)$.
\end{proof}

\begin{proposition}[All differences of the domain]\label{prop:diff}
We have
\begin{equation}\label{eq:difference}
 E+E=((Z\cup\{0\})\times\{0\})
       \ \cup\ (H\times(Y\setminus\{0\})).
\end{equation}
Equivalently,
\begin{equation}\label{eq:missing}
 G\setminus(E+E)=(S\setminus\{0\})\times\{0\}.
\end{equation}
In particular, $|E+E|=36+64\cdot7=484$.
\end{proposition}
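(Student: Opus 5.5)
We split $E+E$ by the vertical coordinate. A difference $(x,y)+(x',y')$ of two points of $E$ has vertical part $y+y'$. It lies in layer $0$ exactly when $y=y'$, and otherwise it lies in layer $y+y'\neq 0$. We treat these two cases separately and then count.

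\emph{Layer zero.} Differences with $y=y'$ form $\bigcup_y (B_y+B_y)\times\{0\}$. Since each $B_y$ is a subspace, $B_y+B_y=B_y$, so this set is $\bigl(\bigcup_y B_y\bigr)\times\{0\}$. The layer assignment \eqref{eq:layers} uses all six spaces $A_0,A_1,A_2,A_3,D_0,D_1$. By Lemma~\ref{lem:table}(2) their union is $Z\cup\{0\}$, which gives the first piece of \eqref{eq:difference}.

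\emph{Nonzero layers.} Fix $v\in Y\setminus\{0\}$. The horizontal parts of differences in layer $v$ form
\[
\bigcup_{y\in Y}(B_y+B_{y+v}).
\]
We must show this union is all of $H$. The key observation is that if $B_y=A_i$ and $B_{y+v}=D_k$, then $A_i\cap D_k=\{0\}$ by Lemma~\ref{lem:table}(3). Two three-dimensional subspaces of $H=\F^6$ meeting trivially have a sum of dimension $6$, so $A_i+D_k=H$.

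The $D$-layers are $y=3$ and $y=7$, and all other layers are $A$-spaces. For a given $v\neq 0$, take $y=3$. Then $y+v\neq 3$, and $y+v=7$ only when $v=4$. So for $v\neq 4$ we get $B_3+B_{3+v}=D_0+A_i=H$. For $v=4$, take $y=7$ instead: $7+4=3$, which pairs $D_0$ with $D_0$ and is useless. The case $v=4$ therefore needs a different pair.

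This is the main obstacle. For $v=4$ the pairs $\{y,y+4\}$ are $\{0,4\},\{1,5\},\{2,6\},\{3,7\}$. The pair $\{1,5\}$ gives $A_1+D_1=H$, by Lemma~\ref{lem:table}(3) again. The main risk is a layer shift where every $D$-layer lands on a $D$-layer; the argument must check that every $v$ has at least one $A$--$D$ pair. The case check is finite:
\begin{itemize}
\item $v=4$: the pair $\{1,5\}$ works.
\item Other $v$ with $3+v\neq 7$: the pair $\{3,3+v\}$ works.
\item Alternatively, use the pair $\{5,5+v\}$ whenever $5+v\notin\{3,7\}$.
\end{itemize}
Combining the two cases gives \eqref{eq:difference}. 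Then \eqref{eq:missing} follows by complementation, since $H\setminus(Z\cup\{0\})=S\setminus\{0\}$. For the count, $|Z\cup\{0\}|=36$ and each of the $7$ nonzero layers contributes all $64$ vectors of $H$, so $|E+E|=36+64\cdot 7=484$.
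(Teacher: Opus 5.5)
Your strategy matches the paper's. Layer zero comes from $B_y+B_y=B_y$ and the union identity in Lemma~\ref{lem:table}. Each nonzero layer $v$ is filled by finding one pair $\{y,y+v\}$ with $B_y=A_i$ and $B_{y+v}=D_k$. The gap is in your finite case check. You state that the $D$-layers are $y=3$ and $y=7$, but by \eqref{eq:layers} also $B_5=D_1$, so the $D$-labels are $\{3,5,7\}$. As a result your second bullet is false for $v=6$. In the vector labeling $3+6=5$, so the pair $\{3,3+v\}$ is $\{3,5\}$, which pairs $D_0$ with $D_1$. Both of these spaces contain the vector $19$, so $D_0+D_1$ has dimension $5$ and is only a hyperplane of $H$. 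Your alternative bullet does not cover this case either, since $5+6=3$ is excluded there, and your first bullet concerns only $v=4$. As written, nothing shows that the layer $v=6$ of $E+E$ is all of $H\times\{6\}$.

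The repair is immediate. For $v=6$ use the pair $\{1,7\}$, since $1+6=7$; this gives $A_1+D_0=H$ by Lemma~\ref{lem:table}(3). The paper avoids a case-by-case check altogether. The $A$-labels $P=\{0,1,2,4,6\}$ form a set of odd size. Translation by a nonzero $v$ splits $Y$ into two-element orbits $\{y,y+v\}$, so it cannot preserve $P$. Hence some $y\in P$ has $y+v\notin P$, and this supplies the needed $A$--$D$ pair for every $v$ at once. The parity argument is immune to exactly the bookkeeping slip in your list.
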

\begin{proof}
A sum of two points of $E$ has vertical coordinate zero exactly when the two points have the same label $y$. Its horizontal part then runs through $B_y+B_y=B_y$. The union identity in Lemma~\ref{lem:table} gives the first part of \eqref{eq:difference}.

Fix a nonzero $z\in Y$. The labels of the $A$ layers form
\[
 P=\{0,1,2,4,6\};
\]
the other three labels have $D$ layers. Translation by $z$ groups the eight labels into pairs $\{y,y+z\}$. If it preserved $P$, then $P$ would be a union of two-element pairs and would have even size. Since $|P|=5$, there is a $y\in P$ with $y+z\notin P$. For these labels, $B_y=A_i$ and $B_{y+z}=D_k$ for some $i,k$.

By Lemma~\ref{lem:table}, these two spaces intersect only at zero. All 64 sums $a+d$ with $a\in A_i$ and $d\in D_k$ are distinct: an equality would give $a+a'=d+d'$ in the intersection. Therefore $A_i+D_k=H$. Their two layers already contribute all of $H\times\{z\}$ to $E+E$. This works for every nonzero $z$ and proves the formula.
\end{proof}

\section{Every tiling complement}\label{sec:complements}
We first solve an eight-point problem in $H$. Its solution will also be used later for spectra.

\subsection{Pair-sums as edges}
An \emph{edge} on a set of vertices means an unordered pair of distinct vertices. A complete graph consists of all such pairs. For example, three vertices $a,b,c$ have the three edges $\{a,b\},\{a,c\},\{b,c\}$, a triangle. We use the eight points of $C$ as vertices and label an edge $\{c,c'\}$ by the vector $c+c'$.

\begin{lemma}\label{lem:relation}
The only nonempty even-sized subset of $C$ whose sum is zero is $C$ itself. Consequently each nonzero point of $S$ has a unique expression as the sum of an unordered pair of distinct points of $C$.
\end{lemma}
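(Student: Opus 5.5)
The plan is to split a subset $R\subseteq C$ according to whether it contains $0$ and $j$, and to read off its sum coordinatewise. Write $R=R_0\cup R_1\cup R_j$. Here $R_0=R\cap\{0\}$, $R_j=R\cap\{j\}$, and $R_1=R\cap\{e_1,\ldots,e_6\}$. The vector $0$ contributes nothing to the sum, so only $R_1$ and $R_j$ matter.

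If $j\notin R$, then $\sum R=\sum_{e_i\in R_1}e_i$. This vector has weight $|R_1|$, because distinct coordinate vectors add to a vector with ones exactly in their positions. The sum is therefore zero only when $R_1=\emptyset$, which forces $R\subseteq\{0\}$. An even-sized nonempty set cannot lie inside $\{0\}$. If instead $j\in R$, then $\sum R=j+\sum_{e_i\in R_1}e_i$. This vector has ones exactly in the coordinates not indexed by $R_1$, so it is zero exactly when $R_1$ consists of all six $e_i$. Then $R\supseteq\{e_1,\ldots,e_6,j\}$, which has seven elements. Evenness forces $0\in R$ as well, so $R=C$. This proves the first assertion.

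For the consequence, existence is already recorded in \eqref{eq:C}: every nonzero point of $S$ is a sum $c+c'$ with $c\ne c'$ in $C$. For uniqueness, suppose $c+c'=d+d'$ with $\{c,c'\}\ne\{d,d'\}$ both pairs of distinct points of $C$. Take the symmetric difference $R=\{c,c'\}\,\triangle\,\{d,d'\}$. Then $\sum R=(c+c')+(d+d')=0$, since shared elements cancel in characteristic two. The set $R$ has even size ($2$ or $4$) and is nonempty because the pairs differ. By the first part, $R=C$, but $|C|=8>4$, a contradiction.

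The only mildly delicate point is the coordinatewise weight computation in the case $j\in R$; everything else is bookkeeping, so I expect no real obstacle. As a cross-check, the uniqueness also follows from counting: $\binom82=28=|S\setminus\{0\}|$, so the surjective map from unordered pairs onto $S\setminus\{0\}$ is a bijection. I would mention this as a remark rather than rely on it.
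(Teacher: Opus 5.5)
Your proof is correct and follows the paper's argument. Your case split on whether $j\in R$ is the paper's coordinatewise observation that $e_i$ is chosen exactly when $j$ is, and your symmetric-difference step is the paper's cancellation of common vertices (your counting cross-check $\binom82=28=|S\setminus\{0\}|$ is also valid, but unnecessary).
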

\begin{proof}
In a zero sum chosen from $e_1,\ldots,e_6,j$, compare coordinate $i$. The presence of $e_i$ must equal the presence of $j$. Thus either none of these seven vectors is chosen or all seven are chosen. Adding the zero vector does not change the sum. In the first case the only nonempty choice is $\{0\}$, of odd size; in the second case an even choice must include zero and is exactly $C$.

If two different unordered pairs had the same sum, cancel their common vertices. The remaining vertices would form a nonempty even zero-sum subset of size at most four. The first assertion forbids this. Existence of a pair representation follows from the definition of $S$.
\end{proof}

\begin{lemma}\label{lem:edges}
The labels of two distinct edges add to an element of $S$ if and only if the two edges share a vertex.
\end{lemma}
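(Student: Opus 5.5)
Two distinct edges $\{c_1,c_2\}$ and $\{c_3,c_4\}$ have labels summing to $c_1+c_2+c_3+c_4$. The plan is to split into the two cases (shared vertex or not) and use Lemma~\ref{lem:relation} to control which sums can land in $S$.

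\emph{Shared vertex.} If the edges share a vertex, say $c_2=c_3$, then the sum collapses to $c_1+c_4$. Since the edges are distinct, $c_1\neq c_4$. So the sum is the label of the edge $\{c_1,c_4\}$, which is a nonzero element of $S=C+C$. This direction is immediate.

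\emph{Disjoint edges.} For the converse, suppose the edges are disjoint, so $c_1,c_2,c_3,c_4$ are four distinct points of $C$, and assume their sum lies in $S$. There are two possibilities.
\begin{itemize}
\item If the sum is zero, then $\{c_1,c_2,c_3,c_4\}$ is a nonempty even-sized zero-sum subset of $C$ different from $C$. Lemma~\ref{lem:relation} forbids this.
\item If the sum is nonzero, Lemma~\ref{lem:relation} writes it as $c_5+c_6$ for distinct $c_5,c_6\in C$. Then the multiset $\{c_1,\dots,c_6\}$ sums to zero. Cancelling repeated elements in pairs leaves a subset of $C$ of even size at most six with zero sum.
\end{itemize}

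The cancelled set in the second case is nonempty: at most two of the four distinct points $c_1,\dots,c_4$ can be cancelled against $c_5,c_6$, so at least two survive. Since $|C|=8>6$, this subset is not $C$ itself. It therefore contradicts Lemma~\ref{lem:relation}.

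An alternative route is a direct weight check. Using the explicit description \eqref{eq:SZ}, a sum of four distinct points of $C$ always has weight $3$ or $4$. For instance, $0+e_i+e_k+j$ has weight $4$, and $e_i+e_k+e_l+j$ has weight $3$. Either way the sum lies in $Z$ rather than $S$. The parity/cancellation argument above is cleaner, so that is the route I would write up.

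The only delicate step is the bookkeeping in the cancellation: verifying that the surviving set is nonempty and not all of $C$. The rest of the argument is routine.
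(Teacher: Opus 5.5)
Your proof is correct and follows the paper's argument. The shared-vertex case collapses to a single label. In the disjoint case, a sum in $S$ would, after cancelling common vertices against a two-point representation, give a nonempty even zero-sum subset of $C$ of size at most six, which contradicts Lemma~\ref{lem:relation}. The only cosmetic difference is how the zero sum is excluded: the paper notes that the two labels are distinct, whereas you apply the first assertion of Lemma~\ref{lem:relation} directly to the four-point set, and the two are equivalent.
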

\begin{proof}
If the edges are $\{a,b\}$ and $\{a,c\}$, their labels add to $b+c\in S$. If they are disjoint, their labels add to the sum of four distinct vertices. The labels are distinct by Lemma~\ref{lem:relation}, so their sum is nonzero. Were that vector in $S$, it would therefore equal the sum of two distinct vertices. Canceling any common vertices between the four-element set and this pair gives a nonempty even zero-sum subset of $C$ of size at most six. It is nonempty because a four-element set is not a two-element set. Lemma~\ref{lem:relation} gives a contradiction.
\end{proof}

\begin{lemma}[The eight-point classification]\label{lem:Q}
If $Q\subseteq H$, $|Q|=8$, and $Q+Q\subseteq S$, then $Q$ is a translate of $C$.
\end{lemma}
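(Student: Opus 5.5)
We first translate so that $0\in Q$. Then we recognize the seven nonzero points of $Q$ as edges of the complete graph on $C$, and use Lemma~\ref{lem:edges} to force these edges to form a star.

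\emph{Normalization.} Translating $Q$ does not change $Q+Q$, so we may replace $Q$ by $Q+q_0$ for any $q_0\in Q$ and assume $0\in Q$. Then every $q\in Q$ satisfies $q=q+0\in Q+Q\subseteq S$. So the seven nonzero points of $Q$ lie in $S\setminus\{0\}$. By Lemma~\ref{lem:relation}, each such point is the label of a unique edge $\{c,c'\}$ of the complete graph on the vertex set $C$. Since the seven points are distinct, we obtain seven distinct edges $\epsilon_1,\ldots,\epsilon_7$.

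\emph{The edges pairwise meet.} For distinct nonzero $q,q'\in Q$ we have $q+q'\in Q+Q\subseteq S$. By Lemma~\ref{lem:edges}, the corresponding two edges therefore share a vertex. Thus $\epsilon_1,\ldots,\epsilon_7$ is a family of pairwise intersecting edges.

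\emph{Pairwise intersecting edges form a star.} We show that such a family with more than three members has a common vertex. Suppose there is no common vertex, and take two edges $\{a,b\}$ and $\{a,c\}$ of the family, which meet at $a$. Some edge of the family avoids $a$; it must meet both $\{a,b\}$ and $\{a,c\}$, so it is $\{b,c\}$. Any further edge must meet each of $\{a,b\}$, $\{a,c\}$ and $\{b,c\}$. It therefore contains at least two of $a,b,c$, so it is one of these three edges. Hence such a family has at most three edges. Since we have seven, all $\epsilon_i$ contain a common vertex $c_0\in C$. A vertex lies in exactly seven edges, so the $\epsilon_i$ are precisely the edges $\{c_0,c\}$ with $c\in C\setminus\{c_0\}$.

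\emph{Conclusion.} The nonzero points of $Q$ are therefore exactly the labels $c_0+c$ for $c\in C\setminus\{c_0\}$. Adjoining $0=c_0+c_0$ gives
\[
Q=\{c_0+c:c\in C\}=C+c_0,
\]
so the original $Q$ is a translate of $C$.

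The only step requiring real care is the reduction to edges, that is, checking that Lemmas~\ref{lem:relation} and~\ref{lem:edges} apply to the normalized $Q$. Once that is done, the argument is the elementary fact that many pairwise intersecting edges must form a star.
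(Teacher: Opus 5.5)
Your proposal is correct and follows the paper's proof essentially step for step. You normalize so that $0\in Q$, use Lemma~\ref{lem:relation} to identify the seven nonzero points with distinct edges, and use Lemma~\ref{lem:edges} to make those edges pairwise intersecting; the same triangle argument then shows that more than three such edges share a vertex, so they form a star and $Q=C+c_0$.
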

\begin{proof}
Translate $Q$ by one of its points so that $0\in Q$; this does not change $Q+Q$, since the translating vector occurs twice in every sum. The seven nonzero points of $Q$ label seven distinct edges. The sum of any two belongs to $S$, so Lemma~\ref{lem:edges} says that these edges pairwise intersect.

Any family of at least four distinct pairwise intersecting edges has a common vertex. To prove this, take two edges $\{a,b\},\{a,c\}$. If some edge avoids $a$, it must be $\{b,c\}$ to meet both. An edge meeting all three either contains $a$, in which case it must be one of the first two, or avoids $a$, in which case it must be $\{b,c\}$. There can be no fourth distinct edge. Thus in our seven-edge family every edge contains the same vertex $c$.

There are exactly seven edges at $c$, one to each other point of $C$. Such a collection is called a \emph{star}. Its seven labels, together with zero, are $c+C$. This proves the assertion for the normalized $Q$, and undoing its initial translation proves the general statement.
\end{proof}

\subsection{Applying the classification}
\begin{proposition}\label{prop:allT}
Every tiling complement of $E$ is a translate of $T_0=C\times\{0\}$. There are exactly 512 of them, precisely eight contain zero, and none is an affine subspace.
\end{proposition}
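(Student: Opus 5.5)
Let $T$ be any tiling complement of $E$. By Lemma~\ref{lem:tiling}, $|T|=2^9/64=8$ and $(E+E)\cap(T+T)=\{0\}$. By Proposition~\ref{prop:diff}, every nonzero difference of $T$ then lies in $(S\setminus\{0\})\times\{0\}$, so $T+T\subseteq S\times\{0\}$. In particular all points of $T$ share one vertical coordinate $y_0$, and we may write $T=Q\times\{y_0\}$ with $Q\subseteq H$, $|Q|=8$ and $Q+Q\subseteq S$. Lemma~\ref{lem:Q} then gives $Q=C+h$ for some $h\in H$, so $T=T_0+(h,y_0)$. Conversely, every translate of a tiling complement is again a tiling complement, since translation preserves unique representation; with Proposition~\ref{prop:tile} this shows the complements are exactly the translates $T_0+g$, $g\in G$.

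To count them, I would compute the stabilizer of $T_0$ under translation. If $T_0+g=T_0$, then $g$ is a difference of two points of $T_0$, so $g=(s,0)$ with $s\in S$, and $C+s=C$. Summing all points of $C+s$ gives $\sum_{c\in C}c+8s=\sum_{c\in C}c$, which carries no information, so I argue directly instead. If $s\neq0$, then $s=c_1+c_2$ is a unique pair sum by Lemma~\ref{lem:relation}, and $C+s=C$ would force translation by $s$ to pair up all eight points of $C$ into four pairs each summing to $s$. That gives four distinct edges with the same label, contradicting the uniqueness in Lemma~\ref{lem:relation}. Hence the stabilizer is trivial and there are $|G|=512$ distinct complements. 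Those containing zero are $T_0+g$ with $-g=g\in T_0$, which gives exactly eight.

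Finally, no complement is an affine subspace. An affine subspace of size $8$ is a coset $a+K$ with $\dim K=3$, and then $(a+K)+(a+K)=K$ has $8$ elements. But $T+T=(C+C)\times\{0\}=S\times\{0\}$ has $29$ elements, so $T$ is not affine, and in particular not linear.

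The only step needing care is the stabilizer computation. The edge-uniqueness argument handles it cleanly, and the rest follows directly from Proposition~\ref{prop:diff} and Lemma~\ref{lem:Q}.
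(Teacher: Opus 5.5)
Your proof is correct and follows the paper's argument: the missing-difference formula forces $T=Q\times\{y_0\}$ with $Q+Q\subseteq S$, Lemma~\ref{lem:Q} identifies $Q$ as a translate of $C$, and the four-equal-labels contradiction with Lemma~\ref{lem:relation} shows that the $512$ translates are distinct. The only variation is your non-affineness step, which compares $|T+T|=|S|=29$ with the $8$ differences of a three-dimensional coset, whereas the paper notes that an affine $C$ containing $0$ and every $e_i$ would have to be all of $H$; both arguments are immediate.
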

\begin{proof}
Let $T$ be any complement. It has $512/64=8$ points. Translating it by a point of itself makes $0\in T$ and preserves the tiling property: translating all the sums translates the whole group. Every nonzero point of this normalized $T$ lies in $T+T$ and must avoid $E+E$. Formula~\eqref{eq:missing} forces it into $(S\setminus\{0\})\times\{0\}$. All differences of points of $T$ have the same restriction. Hence $T=Q\times\{0\}$ where $|Q|=8$ and $Q+Q\subseteq S$. Lemma~\ref{lem:Q} gives the asserted form after undoing the translation. Conversely, Proposition~\ref{prop:tile} shows that all translates of $T_0$ are complements.

We check that these translates are distinct. If $C+a=C$ with $a\ne0$, translation by $a$ would partition $C$ into four disjoint pairs, each having sum $a$. This contradicts uniqueness of pair-sums in Lemma~\ref{lem:relation}. Thus $C$ has no nonzero translation preserving it. The same holds for $T_0$: a translation preserving it must have zero vertical coordinate and then must preserve $C$. Therefore its 512 translates are distinct. A translate $T_0+g$ contains zero exactly when $g\in T_0$, giving eight normalized complements.

Finally, were $C$ an affine subspace, the fact that $0\in C$ would make it linear. Since it contains all six coordinate vectors, it would then equal $H$, contradicting $|C|=8$. If any translate of $T_0$ were affine, then so would be $T_0$, and its horizontal projection would make $C$ affine. This rules out every affine, hence every linear, complement.
\end{proof}
At this point the failure of the subspace-complement assertion in dimension nine has been proved without Fourier analysis. The remaining work identifies the spectra, rather than merely supplying one of them.

\section{Finite Fourier analysis from linear algebra}\label{sec:fourier}
We now develop the sign functions used to measure spectrality. All sums remain finite.

\subsection{Dot products, characters, and inversion}
For $x,\xi\in\F^n$, set $\xi\cdot x=\sum_i\xi_ix_i\in\F$. This is a bilinear form, but it is not a positive inner product: $(1,1)\cdot(1,1)=0$. It is \emph{nondegenerate}: if $x\ne0$, a coordinate vector $e_i$ with $x_i=1$ satisfies $e_i\cdot x=1$.

A linear functional is a linear map $\F^n\to\F$. It is determined by its values on the coordinate basis, so every such functional has the form $x\mapsto\xi\cdot x$ for a unique $\xi$. The associated \emph{character} is the real-valued function
\[
 \chi_\xi(x)=(-1)^{\xi\cdot x}.
\]
Its exponent is calculated modulo two and its value is the ordinary real number $1$ or $-1$. Bilinearity gives $\chi_\xi(x+x')=\chi_\xi(x)\chi_\xi(x')$ and $\chi_\xi\chi_\eta=\chi_{\xi+\eta}$. On $\F$, the two characters are the constant function $(1,1)$ and the function $(1,-1)$.

For a real- or complex-valued function $f$ on $\F^n$, define
\[
 W_f(\xi)=\sum_{x\in\F^n}f(x)(-1)^{\xi\cdot x},
 \qquad \widehat f(\xi)=2^{-n}W_f(\xi).
\]
These are the unnormalized and normalized \emph{Fourier transforms}, also called Walsh transforms in this setting. For a set $A$, its indicator $\one_A$ equals one on $A$ and zero elsewhere; we abbreviate $W_{\one_A}$ to $W_A$. The \emph{support} of a function is the set where it is nonzero. For example, on $\F$, $W_{\{0\}}=(1,1)$ whereas $W_{\F}=(2,0)$.

\begin{lemma}[Character orthogonality and inversion]\label{lem:inversion}
For $u\in\F^n$,
\[
 \sum_{x\in\F^n}(-1)^{u\cdot x}
 =\begin{cases}2^n,&u=0,\\0,&u\ne0.\end{cases}
\]
Consequently
\begin{equation}\label{eq:inversion}
 f(x)=2^{-n}\sum_{\xi\in\F^n}W_f(\xi)(-1)^{\xi\cdot x}.
\end{equation}
In particular, the transform determines the function uniquely.
\end{lemma}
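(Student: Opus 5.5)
The plan is to prove the orthogonality sum first and then substitute the definition of $W_f$ into the right-hand side of \eqref{eq:inversion}.

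For the orthogonality sum, the case $u=0$ is immediate: every term equals $(-1)^0=1$, and there are $2^n$ terms. For $u\ne0$, nondegeneracy supplies a vector $v$ with $u\cdot v=1$; the coordinate vector $e_i$ with $u_i=1$ will do. The map $x\mapsto x+v$ is a bijection of $\F^n$, so reindexing the sum by it leaves the sum unchanged. On the other hand, bilinearity gives $(-1)^{u\cdot(x+v)}=-(-1)^{u\cdot x}$, so the reindexed sum is also the negative of the original. A real number equal to its own negative is zero. Alternatively, one can group the terms into the pairs $\{x,x+v\}$, each of which contributes $1-1=0$.

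For the inversion formula, I substitute $W_f(\xi)=\sum_{y}f(y)(-1)^{\xi\cdot y}$ into the right-hand side of \eqref{eq:inversion}. Both sums are finite, so they can be exchanged, which gives
\[
 2^{-n}\sum_{y}f(y)\sum_{\xi}(-1)^{\xi\cdot(x+y)}.
\]
Here I use $(-1)^{\xi\cdot y}(-1)^{\xi\cdot x}=(-1)^{\xi\cdot(x+y)}$, which holds by linearity in the first argument. Applying the first part with $u=x+y$, now summing over $\xi$, the inner sum equals $2^n$ when $y=x$ and $0$ otherwise. The orthogonality statement is symmetric in which variable is summed, since $\xi\cdot u=u\cdot\xi$. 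Only the term $y=x$ survives, and the right-hand side collapses to $f(x)$.

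Uniqueness then follows directly, because \eqref{eq:inversion} expresses $f$ explicitly in terms of $W_f$. Equivalently, if $W_f=W_g$, then linearity of the transform gives $W_{f-g}=0$, and inversion yields $f-g=0$.

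No step is a real obstacle. The only point that needs care is the sign-flip step for $u\ne0$, where one must remember that the exponent is computed in $\F$ while the value of the character is a real number.
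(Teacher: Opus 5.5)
Your proposal is correct and follows essentially the same route as the paper: you prove orthogonality by pairing $x$ with $x+e_i$ for a coordinate where $u_i=1$, then obtain inversion by substituting the definition of $W_f$, exchanging the finite sums, and collapsing the inner sum with the first identity. Uniqueness then follows directly from the inversion formula, as in the paper.
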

\begin{proof}
If $u\ne0$, choose $i$ with $u_i=1$. Pair $x$ with $x+e_i$; their signs are opposite, so the sum is zero. For $u=0$ every term is one. To prove inversion, substitute the definition of $W_f$ into its proposed right side and interchange the finite sums. The coefficient of $f(t)$ becomes
\[
 2^{-n}\sum_\xi(-1)^{\xi\cdot(t+x)},
\]
which is one if $t=x$ and zero otherwise by the first identity.
\end{proof}

For a subspace $B\le\F^n$, its \emph{orthogonal complement}, also called its annihilator, is
\[
 B^\perp=\{\xi:\xi\cdot b=0\text{ for every }b\in B\}.
\]
For example, in $\F^2$ the line spanned by $(1,1)$ is its own orthogonal complement. In general $\dim B^\perp=n-\dim B$. Indeed, extend a basis of $B$ to a basis of the whole space. Every linear functional on $B$ extends by assigning arbitrary values on the added basis vectors. The restriction map from all linear functionals to those on $B$ is therefore onto, and its kernel is $B^\perp$. The dimension formula follows from rank-nullity. Nondegeneracy also gives $(B^\perp)^\perp=B$, by inclusion and equal dimensions.

\begin{lemma}\label{lem:subtransform}
For a subspace $B$ and a vector $a$,
\[
 W_B(\xi)=\begin{cases}|B|,&\xi\in B^\perp,\\0,&\xi\notin B^\perp,\end{cases}
 \qquad W_{a+B}(\xi)=(-1)^{\xi\cdot a}W_B(\xi).
\]
More generally, translation of a function multiplies its transform by the same character factor.
\end{lemma}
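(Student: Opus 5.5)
The plan is to compute $W_B$ directly from the definition, splitting into the two cases $\xi\in B^\perp$ and $\xi\notin B^\perp$, and then to handle translation by a change of summation variable.

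\emph{The subspace case.} By definition,
\[
 W_B(\xi)=\sum_{b\in B}(-1)^{\xi\cdot b}.
\]
If $\xi\in B^\perp$, every exponent is zero, so every term is one and the sum is $|B|$.

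If $\xi\notin B^\perp$, choose $b_0\in B$ with $\xi\cdot b_0=1$. Because $B$ is closed under addition, the map $b\mapsto b+b_0$ is a bijection of $B$ onto itself. By bilinearity,
\[
 (-1)^{\xi\cdot(b+b_0)}=-(-1)^{\xi\cdot b}.
\]
Reindexing the sum by this bijection therefore gives $W_B(\xi)=-W_B(\xi)$, so $W_B(\xi)=0$. This is the same pairing trick used in the proof of Lemma~\ref{lem:inversion}, now applied inside $B$. Alternatively, one could pair $b$ with $b+b_0$: since $b+b_0+b_0=b$, this is a fixed-point-free involution on $B$ whose two paired terms have opposite signs.

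\emph{The translation statement.} I will prove the general claim first and then specialize. For a function $f$ and a vector $a$, define the translate $f_a(x)=f(x+a)$; since $x-a=x+a$ over $\F$, the sign convention does not matter. Substituting $x=y+a$ in the finite sum gives
\[
 W_{f_a}(\xi)=\sum_y f(y)(-1)^{\xi\cdot(y+a)}=(-1)^{\xi\cdot a}W_f(\xi).
\]
The indicator of $a+B$ is exactly $\one_B$ translated by $a$, so the formula for $W_{a+B}$ follows at once.

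No step is a real obstacle. The only point needing care is the vanishing case, where one must use a witness $b_0$ lying in $B$ itself, not merely in $\F^n$. The closure of $B$ under addition is what makes the reindexing a bijection of $B$.
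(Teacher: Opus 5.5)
Your proof is correct and follows the same approach as the paper: the constant-one case when $\xi\in B^\perp$, pairing $b$ with $b+b_0$ for a witness $b_0\in B$ otherwise, and the substitution $x=a+b$ for translates. Your reindexing via $f_a(x)=f(x+a)$ makes the paper's one-line ``same substitution'' remark explicit.
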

\begin{proof}
On $B$, a character annihilating $B$ is constantly one. Otherwise choose $b_0\in B$ with $\xi\cdot b_0=1$ and pair $b$ with $b+b_0$ to get zero. For a translated set, substitute $x=a+b$ in the defining sum. The same substitution proves the function statement.
\end{proof}

\begin{lemma}[Support and translation invariance]\label{lem:period}
For a subspace $U\le\F^n$, a function $f$ has Fourier support contained in $U$ if and only if
\[
 f(x+k)=f(x)\quad\text{for all }x\in\F^n,\ k\in U^\perp.
\]
In particular, a function on $H\times Y$ with Fourier support in $H\times\{0\}$ is independent of its $Y$ coordinate.
\end{lemma}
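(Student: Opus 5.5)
We prove the two implications separately, using the inversion formula \eqref{eq:inversion} of Lemma~\ref{lem:inversion} together with Lemma~\ref{lem:inversion} itself, applied this time to the subspace $U^\perp$. In both directions we compare $f$ with its translate $f_k(x)=f(x+k)$. Lemma~\ref{lem:subtransform}, in its general form for functions, gives the key relation
\[
 W_{f_k}(\xi)=(-1)^{\xi\cdot k}W_f(\xi).
\]
This follows by substituting $x\mapsto x+k$ in the defining sum.

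\emph{Support in $U$ implies invariance.} Suppose $W_f(\xi)=0$ for every $\xi\notin U$, and fix $k\in U^\perp$. For each $\xi\in U$ we have $\xi\cdot k=0$, because $k$ annihilates $U$. Hence $W_{f_k}(\xi)=W_f(\xi)$ for every $\xi\in U$. Off $U$ both transforms vanish, since each is $W_f$ multiplied by a sign. The two transforms are therefore equal everywhere, and the uniqueness statement in Lemma~\ref{lem:inversion} gives $f_k=f$. Alternatively, one can see this directly from \eqref{eq:inversion}: every surviving character $(-1)^{\xi\cdot x}$ with $\xi\in U$ is unchanged under $x\mapsto x+k$.

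\emph{Invariance implies support in $U$.} Suppose $f_k=f$ for every $k\in U^\perp$. Then for every such $k$ and every $\xi$,
\[
 \bigl(1-(-1)^{\xi\cdot k}\bigr)W_f(\xi)=0.
\]
Now take $\xi\notin U$. By the double-complement identity $(U^\perp)^\perp=U$ proved above, $\xi$ fails to annihilate $U^\perp$. So there is some $k\in U^\perp$ with $\xi\cdot k=1$, which forces $W_f(\xi)=0$. Thus the Fourier support lies in $U$. This step is the only place where any subtlety enters, namely the use of $(U^\perp)^\perp=U$ for the degenerate-looking form $\xi\cdot x$. That identity was already established from nondegeneracy and dimension counting, so I expect no real obstacle.

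\emph{The particular case.} Take $n=9$ with coordinates split as $H\times Y$, and let $U=H\times\{0\}$. Its annihilator is $U^\perp=\{0\}\times Y$. Indeed, $(\eta,\zeta)\cdot(x,0)=\eta\cdot x$ vanishes for all $x$ exactly when $\eta=0$, by nondegeneracy on $H$. Invariance under translation by all of $\{0\}\times Y$ says precisely that $f(x,y)=f(x,0)$ for every $y$. In other words, $f$ is independent of its $Y$ coordinate.
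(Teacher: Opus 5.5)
Your proof is correct and follows essentially the paper's argument. The converse uses the translation relation $W_{f_k}(\xi)=(-1)^{\xi\cdot k}W_f(\xi)$ with $(U^\perp)^\perp=U$ exactly as the paper does; your forward direction via uniqueness of the transform is equivalent to the paper's observation that each surviving term of the inversion formula is unchanged by $k\in U^\perp$, and the product case $(H\times\{0\})^\perp=\{0\}\times Y$ matches the paper.
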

\begin{proof}
If the support is contained in $U$, every nonzero term in the inversion formula is unchanged by adding $k\in U^\perp$, proving invariance. Conversely, invariance and the translation formula give $W_f(\xi)=(-1)^{\xi\cdot k}W_f(\xi)$ for every such $k$. If $\xi\notin U=(U^\perp)^\perp$, some $k$ has $\xi\cdot k=1$, forcing $W_f(\xi)=0$. In the product case, $(H\times\{0\})^\perp=\{0\}\times Y$.
\end{proof}

\subsection{Spectra and their symmetric matrix criterion}
The complex-valued functions on a finite nonempty set $A$ form a vector space of dimension $|A|$. Use the inner product $\langle f,g\rangle=\sum_{a\in A}f(a)\overline{g(a)}$.
\begin{definition}
A set $\Lambda\subseteq\F^n$ is a \emph{spectrum} of $A$ if the restrictions of $\chi_\lambda$, $\lambda\in\Lambda$, form an orthogonal basis of this function space. A set admitting a spectrum is \emph{spectral}.
\end{definition}
For example, $\{00,10\}\subseteq\F^2$ has spectrum $\{00,10\}$: its two character columns are $(1,1)$ and $(1,-1)$. These spectra need not be unique; $\{00,11\}$ gives the same two columns on that domain.

\begin{lemma}[The character matrix]\label{lem:spectral}
Let $|A|=|\Lambda|=m$ and form the real matrix $H_{a,\lambda}=(-1)^{a\cdot\lambda}$. The following are equivalent:
\begin{enumerate}
\item $\Lambda$ is a spectrum of $A$;
\item $H^{\mathsf T}H=mI$;
\item $W_A(\lambda+\lambda')=0$ for distinct $\lambda,\lambda'\in\Lambda$.
\end{enumerate}
They also imply $HH^{\mathsf T}=mI$, so $A$ is a spectrum of $\Lambda$ and $W_\Lambda(a+a')=0$ for distinct $a,a'\in A$.
\end{lemma}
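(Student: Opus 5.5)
The plan is to read the entries of $H^{\mathsf T}H$ directly and then use the fact that a square real matrix with $H^{\mathsf T}H=mI$ is invertible with inverse $m^{-1}H^{\mathsf T}$.

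\emph{Step 1: the Gram matrix.} For $\lambda,\lambda'\in\Lambda$, the $(\lambda,\lambda')$ entry of $H^{\mathsf T}H$ is
\[
 \sum_{a\in A}(-1)^{a\cdot\lambda}(-1)^{a\cdot\lambda'}
 =\sum_{a\in A}(-1)^{a\cdot(\lambda+\lambda')}
 =W_A(\lambda+\lambda').
\]
This uses the character identity $\chi_\lambda\chi_{\lambda'}=\chi_{\lambda+\lambda'}$ from Section~\ref{sec:fourier}. Since the characters are real, this entry is also exactly $\langle\chi_\lambda,\chi_{\lambda'}\rangle$ on $A$. The diagonal entries are $W_A(0)=|A|=m$. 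Hence (ii) and (iii) are equivalent.

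\emph{Step 2: (i) and (iii).} Condition (iii) says precisely that the $m$ restricted characters are pairwise orthogonal. Each has norm squared $m$, so none is zero. Nonzero pairwise orthogonal vectors are linearly independent: take the inner product of a vanishing linear combination with each vector. Thus $m$ of them form a basis of the $m$-dimensional function space on $A$, so (iii) implies (i). Conversely, (i) includes orthogonality, which is (iii). One point needs care: distinct $\lambda\ne\lambda'$ must give distinct columns. This is automatic under (iii), since equal columns would have inner product $m\neq0$.

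\emph{Step 3: the dual statement.} Since $H$ is square and $H^{\mathsf T}H=mI$, we have $H^{-1}=m^{-1}H^{\mathsf T}$. A one-sided inverse of a square matrix is two-sided, so $HH^{\mathsf T}=mI$. The matrix for the swapped pair $(\Lambda,A)$ is $H^{\mathsf T}$, because $a\cdot\lambda=\lambda\cdot a$. Applying Steps 1 and 2 to it shows that $A$ is a spectrum of $\Lambda$, and that the off-diagonal entries $W_\Lambda(a+a')$ vanish for distinct $a,a'$.

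The only mildly delicate point is the two-sidedness of the inverse. I would justify it by rank--nullity: $H^{\mathsf T}H=mI$ forces $H$ to be injective, hence invertible. Everything else is bookkeeping.
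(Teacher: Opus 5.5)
Your proposal is correct and follows the paper's proof essentially step for step. Both identify the Gram entries as $W_A(\lambda+\lambda')$ with diagonal $m$, use the independence of nonzero orthogonal columns to get a basis, and use the two-sided inverse $m^{-1}H^{\mathsf T}$ of the square matrix $H$ to obtain $HH^{\mathsf T}=mI$ and the dual statements. Your remarks on distinct columns and the rank--nullity justification of invertibility only spell out what the paper leaves implicit.
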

\begin{proof}
The $(\lambda,\lambda')$ entry of $H^{\mathsf T}H$ is $W_A(\lambda+\lambda')$. Each diagonal entry is $m$. Nonzero orthogonal columns are linearly independent, and $m$ such columns form a basis in dimension $m$, proving the equivalences. The equation gives the square real matrix $H$ a left inverse $m^{-1}H^{\mathsf T}$. A square matrix with a left inverse is invertible, so this is also its right inverse and $HH^{\mathsf T}=mI$. Its entries give the last assertions. This matrix multiplication is over the real numbers, not over $\F$.
\end{proof}

\begin{lemma}[Coordinate changes]\label{lem:affine}
If $M$ is an invertible linear map on $\F^n$, then $A\oplus T=\F^n$ implies $(MA+a)\oplus(MT+t)=\F^n$ for any $a,t$. If $\Lambda$ is a spectrum of $A$, then $M^{-\mathsf T}\Lambda+\eta$ is a spectrum of $MA+a$ for every $a,\eta$.
\end{lemma}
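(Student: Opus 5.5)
The tiling claim is a direct verification through the criterion of Lemma~\ref{lem:tiling}. Since $M$ is an invertible linear map, $|MA+a|=|A|$ and $|MT+t|=|T|$, so the product of the sizes is still $2^n$. Translation leaves difference sets unchanged, and linearity gives $(MA+a)+(MA+a)=M(A+A)$, and likewise $(MT+t)+(MT+t)=M(T+T)$. Because $M$ is injective, $M(A+A)\cap M(T+T)=M\bigl((A+A)\cap(T+T)\bigr)=M\{0\}=\{0\}$. Lemma~\ref{lem:tiling} then gives the tiling.

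For the spectral claim I will use criterion (3) of Lemma~\ref{lem:spectral}, which asks that $W_A$ vanish at differences of distinct spectrum points. The first step is to compute how $W$ transforms under the affine change of variables, substituting $x=My+a$ with $y\in A$:
\[
 W_{MA+a}(\xi)=\sum_{y\in A}(-1)^{\xi\cdot(My+a)}=(-1)^{\xi\cdot a}\sum_{y\in A}(-1)^{(M^{\mathsf T}\xi)\cdot y}=(-1)^{\xi\cdot a}\,W_A(M^{\mathsf T}\xi).
\]
This rests on the identity $\xi\cdot My=(M^{\mathsf T}\xi)\cdot y$ over $\F$, which holds because both sides equal $\sum_{i,k}\xi_iM_{ik}y_k$. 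I also need that the transpose of an invertible matrix is invertible, with $(M^{\mathsf T})^{-1}=(M^{-1})^{\mathsf T}$; this follows from transposing $MM^{-1}=I$.

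Now let $\Lambda'=M^{-\mathsf T}\Lambda+\eta$. Its size equals $|\Lambda|=|A|=|MA+a|$, since the map $\lambda\mapsto M^{-\mathsf T}\lambda+\eta$ is a bijection. Take distinct points $M^{-\mathsf T}\lambda+\eta$ and $M^{-\mathsf T}\lambda'+\eta$ of $\Lambda'$, so that $\lambda\ne\lambda'$. Their sum is $\xi=M^{-\mathsf T}(\lambda+\lambda')$, because the two copies of $\eta$ cancel. Then $M^{\mathsf T}\xi=\lambda+\lambda'$, and the formula above gives $W_{MA+a}(\xi)=\pm W_A(\lambda+\lambda')=0$ by Lemma~\ref{lem:spectral}. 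Applying that lemma in the reverse direction shows $\Lambda'$ is a spectrum of $MA+a$.

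No step is a genuine obstacle. The only point needing care is the transpose bookkeeping: one must check that $M^{-\mathsf T}$, rather than $M$ or $M^{-1}$, is the map carrying differences of the spectrum to the points where $W_A$ is evaluated.
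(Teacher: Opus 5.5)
Your proof is correct and is essentially the paper's argument. Both rest on the identity $\xi\cdot My=(M^{\mathsf T}\xi)\cdot y$: you package it as the rule $W_{MA+a}(\xi)=(-1)^{\xi\cdot a}W_A(M^{\mathsf T}\xi)$ and apply criterion (3) of Lemma~\ref{lem:spectral}, whereas the paper expands $(Mx+a)\cdot(M^{-\mathsf T}\lambda+\eta)$ and notes that the character matrix only changes by sign flips of rows and columns; similarly, you check tiling via the difference criterion of Lemma~\ref{lem:tiling}, while the paper observes that all sums are carried by the bijection $x\mapsto Mx+a+t$.
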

\begin{proof}
The tiling sums are the image under the bijection $x\mapsto Mx+a+t$ of the original sums. For spectrality,
\[
 (Mx+a)\cdot(M^{-\mathsf T}\lambda+\eta)
 =x\cdot\lambda+x\cdot M^{\mathsf T}\eta+a\cdot M^{-\mathsf T}\lambda+a\cdot\eta.
\]
Besides $x\cdot\lambda$, the terms depend only on the row, only on the column, or are constant. Thus the new character matrix is obtained by multiplying rows and columns by signs, which preserves their orthogonality. In particular, translating a domain or a spectrum is harmless. The inverse transpose, rather than $M$ itself, is essential for frequencies.
\end{proof}

\section{Every spectrum of the example}\label{sec:spectra}
We now prove the second half of Theorem~\ref{thm:example}. The argument has two steps: the difference set forces a product shape, and the horizontal layers restrict the remaining eight-point factor.

\subsection{An explicit spectrum}
The transform of the eight-point set $C$ is especially simple. For $u\in H$ of weight $w$,
\begin{equation}\label{eq:WC}
 W_C(u)=1+\sum_{i=1}^6(-1)^{u_i}+(-1)^{\sum_i u_i}
       =7-2w+(-1)^w.
\end{equation}
Its values for $w=0,1,\ldots,6$ are respectively
\[
 8,\ 4,\ 4,\ 0,\ 0,\ -4,\ -4.
\]
Consequently $\supp W_C=S$. For $\Lambda_0=C\times Y$, character orthogonality in $Y$ gives
\begin{equation}\label{eq:productW}
 W_{\Lambda_0}(u,v)=W_C(u)\sum_{y\in Y}(-1)^{v\cdot y}
 =\begin{cases}8W_C(u),&v=0,\\0,&v\ne0.\end{cases}
\end{equation}
By \eqref{eq:difference}, this vanishes at every nonzero difference of points of $E$. The matrix criterion, using row orthogonality, proves that $\Lambda_0$ is a spectrum of $E$. Both sets have 64 points, so the square-matrix hypothesis is satisfied.

\subsection{The annihilators of the six spaces}
To restrict an arbitrary spectrum, we need the union of $B_y^\perp$. A short linear map supplies it without a second table. Set $p(x)=\sum_i x_i\in\F$ and define
\[
 Jx=x+p(x)j\qquad(x\in H).
\]
Since $p(j)=0$ in dimension six, $p(Jx)=p(x)$ and $J^2x=x$. Thus $J$ is an invertible linear map. It fixes weight-four vectors and complements the six bits of a weight-three vector, again giving weight three. Therefore $J(Z)=Z$.

\begin{lemma}\label{lem:J}
If $B\le H$ has dimension three and every nonzero element of $B$ has weight three or four, then $B^\perp=J(B)$.
\end{lemma}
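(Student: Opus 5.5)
Since $J$ is an invertible linear map, $J(B)$ is a three-dimensional subspace, and $\dim B^\perp=6-3=3$ by the dimension formula recalled before Lemma~\ref{lem:subtransform}. So it suffices to prove the inclusion $J(B)\subseteq B^\perp$. In other words, we must show that $Jb\cdot b'=0$ for all $b,b'\in B$. Equality then follows from equal dimensions.

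Next we compute the form $Jx\cdot y$. We have
\[
 Jx\cdot y=x\cdot y+p(x)\,(j\cdot y)=x\cdot y+p(x)p(y).
\]
This is a symmetric bilinear form, and its diagonal value is $Jx\cdot x=p(x)+p(x)^2=0$, since $x\cdot x=\sum_i x_i^2=p(x)$. Hence the form is alternating. For $b,b'\in B$, expand $J(b+b')\cdot(b+b')=0$. The diagonal terms vanish, and symmetry leaves $2\,Jb\cdot b'$, which is zero in characteristic two. So the alternating property alone gives nothing, and we must use the weight hypothesis.

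Over $\F$, the number $x\cdot y \bmod 2$ is the parity of $|\supp x\cap\supp y|$. We use the identity
\[
 \wt(x+y)=\wt(x)+\wt(y)-2|\supp x\cap\supp y|,
\]
so $|\supp x\cap\supp y|=(\wt x+\wt y-\wt(x+y))/2$. Take distinct nonzero $b,b'\in B$; the case $b=b'$ or a zero vector is trivial, by the diagonal computation above. All three of $b,b',b+b'$ have weights in $\{3,4\}$. The parity of $x\cdot y$ is then determined by these weights modulo four, and $p(x)\equiv\wt(x)\pmod 2$. It remains to run through the possible weight triples $(\wt b,\wt b',\wt(b+b'))$ in $\{3,4\}^3$.

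A sum of two vectors has even weight exactly when their weights have equal parity. So the possible triples are $(3,3,4)$, $(3,4,3)$, $(4,3,3)$ and $(4,4,4)$. For $(3,3,4)$ the intersection size is $(3+3-4)/2=1$, so $b\cdot b'=1$, and $p(b)p(b')=1$; the sum is $0$. For $(3,4,3)$ the intersection size is $(3+4-3)/2=2$, so $b\cdot b'=0$, and $p(b)p(b')=0$; the sum is $0$. The case $(4,3,3)$ is symmetric. For $(4,4,4)$ the intersection size is $2$, so $b\cdot b'=0$, and $p(b)p(b')=0$; the sum is $0$. Hence $Jb\cdot b'=0$ in every case, which proves the inclusion and therefore $B^\perp=J(B)$. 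The only step needing care is this short case analysis on weights, which is where the hypothesis that all nonzero weights lie in $\{3,4\}$ is used.
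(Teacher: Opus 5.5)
Your proof is correct and follows the paper's argument. Both reduce to $J(B)\subseteq B^\perp$ by a dimension count, expand $Jb\cdot b'=b\cdot b'+p(b)p(b')$, and check $b\cdot b'=p(b)p(b')$ using $\wt(b+b')=\wt(b)+\wt(b')-2r$ in the same three weight cases. The alternating-form detour is harmless but, as you note, unnecessary.
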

\begin{proof}
We claim that $b\cdot b'=p(b)p(b')$ for $b,b'\in B$. If one is zero this is immediate. If $b=b'$, both sides equal $p(b)$, since in $\F$ each coordinate and each parity bit equals its own square.

For distinct nonzero $b,b'$, their sum is again nonzero in $B$. The integer identity
\[
 \wt(b+b')=\wt(b)+\wt(b')-2r
\]
holds, where $r$ is the number of coordinates in which both are one. The dot product is $r$ modulo two. If both weights are three, their sum has even weight and hence weight four; thus $r=1$. If the weights are three and four, the sum has odd weight and hence weight three; thus $r=2$. If both are four, the sum has weight four and again $r=2$. These are exactly the parities $p(b)p(b')$.

It follows that $(Jb)\cdot b'=b\cdot b'+p(b)(j\cdot b')=0$. Hence $J(B)\subseteq B^\perp$. Both spaces have dimension three, giving equality.
\end{proof}
Apply $J$ to the union in Lemma~\ref{lem:table}. Because the repeated layers do not change that union, we obtain
\begin{equation}\label{eq:dualunion}
 \bigcup_{y\in Y}B_y^\perp=J\left(\bigcup_{y\in Y}B_y\right)=Z\cup\{0\}.
\end{equation}

\subsection{Completeness of the spectrum list}
\begin{proposition}\label{prop:allL}
Every spectrum of $E$ is $(C+a)\times Y$ for some $a\in H$. There are 64 distinct such sets and precisely eight contain zero.
\end{proposition}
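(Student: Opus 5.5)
The plan is to combine the two halves of the matrix criterion, Lemma~\ref{lem:spectral}. The dual half forces a product shape $\Lambda=Q\times Y$. The original half then forces $Q+Q\subseteq S$, so that Lemma~\ref{lem:Q} applies.

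Let $\Lambda$ be a spectrum of $E$, so $|\Lambda|=64$. By the last assertion of Lemma~\ref{lem:spectral}, $E$ is a spectrum of $\Lambda$, and hence $W_\Lambda(e+e')=0$ for all distinct $e,e'\in E$. By Proposition~\ref{prop:diff} these nonzero differences are all of $G$ except $(S\setminus\{0\})\times\{0\}$. Therefore $\supp W_\Lambda\subseteq S\times\{0\}\subseteq H\times\{0\}$. Lemma~\ref{lem:period}, applied to $f=\one_\Lambda$, then says that $\one_\Lambda$ is independent of the $Y$ coordinate. So $\Lambda=Q\times Y$ for some $Q\subseteq H$, and counting gives $|Q|=8$.

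Next use the condition $W_E(\lambda+\lambda')=0$ for pairs in the same $Y$-fibre, $\lambda=(q,y)$ and $\lambda'=(q',y)$ with $q\ne q'$. Their sum is $(q+q',0)$. Lemma~\ref{lem:subtransform} gives, layer by layer,
\[
 W_E(u,0)=\sum_{y\in Y}W_{B_y}(u)=8\,\#\{y\in Y:u\in B_y^\perp\}.
\]
This is a sum of nonnegative terms. It is positive exactly when $u\in\bigcup_yB_y^\perp$, which equals $Z\cup\{0\}$ by \eqref{eq:dualunion}. Hence every nonzero $q+q'$ must avoid $Z$, that is, $Q+Q\subseteq S$. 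Lemma~\ref{lem:Q} now gives $Q=C+a$, so $\Lambda=(C+a)\times Y$.

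I expect the product-shape step to be the main obstacle. It works cleanly only because the zero set of $W_\Lambda$ coming from Proposition~\ref{prop:diff} covers every point with nonzero vertical coordinate, which is what lets Lemma~\ref{lem:period} apply. The sign-free positivity of $W_E(u,0)$ also matters: it is what makes the second step immediate, since no cancellation between layers can occur.

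For the converse, $\Lambda_0=C\times Y$ is a spectrum by \eqref{eq:productW}. Lemma~\ref{lem:affine} with $M=I$ and $\eta=(a,0)$ then shows that every $(C+a)\times Y$ is a spectrum. For the count, the proof of Proposition~\ref{prop:allT} shows that $C+a=C$ forces $a=0$, so the 64 translates $C+a$ are distinct, and hence so are the sets $(C+a)\times Y$. Finally, $0\in(C+a)\times Y$ exactly when $a\in C$, which gives precisely eight spectra containing zero.
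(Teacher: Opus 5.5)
Your proof is correct, and its overall structure matches the paper's. In both, the vanishing of $W_\Lambda$ on $(E+E)\setminus\{0\}$ forces $\Lambda=Q\times Y$ via Lemma~\ref{lem:period}, then one shows $Q+Q\subseteq S$ and applies Lemma~\ref{lem:Q}. The converse and the counting are handled the same way.

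The one genuine difference is how you obtain $Q+Q\subseteq S$. The paper uses the pairs $(q,0),(q',v)$ for all eight $v\in Y$. This makes every Fourier coefficient of the function $y\mapsto W_{B_y}(u)$ on $Y$ vanish, and Fourier inversion on $Y$ then gives $W_{B_y}(u)=0$ for each $y$. You use only same-fibre pairs ($v=0$). You then observe that $W_{B_y}(u)\in\{0,8\}$, so $W_E(u,0)=8\,\#\{y:u\in B_y^\perp\}$ admits no cancellation.

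Your route is more economical: it needs a single vertical difference and no inversion on $Y$. However, it depends on the layers being linear subspaces, whose transforms are nonnegative. The paper's inversion argument ignores signs, and for arbitrary layer sets it would still show that $u$ lies outside every layer's Fourier support. So your remark that positivity ``matters'' holds for your argument but is not essential to the result. Since the layers here, and in Theorem~\ref{thm:common-layers}, are subspaces, either argument suffices.
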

\begin{proof}
Let $\Gamma$ be any spectrum of $E$. It has 64 points. The row-orthogonality conclusion of Lemma~\ref{lem:spectral} gives
\[
 W_\Gamma(d)=0\qquad(d\in(E+E)\setminus\{0\}).
\]
Using \eqref{eq:missing}, we get $\supp W_\Gamma\subseteq S\times\{0\}\subseteq H\times\{0\}$. By Lemma~\ref{lem:period}, $\one_\Gamma$ is independent of its vertical coordinate. Thus
\[
 \Gamma=Q\times Y\quad\text{with }|Q|=64/8=8.
\]
This product conclusion applies to every spectrum, irrespective of how it was obtained.

Take distinct $q,q'\in Q$ and put $u=q+q'\ne0$. For every $v\in Y$, $(u,v)$ is the difference of the two points $(q,0),(q',v)$ in $\Gamma$. Column orthogonality gives
\[
 0=W_E(u,v)=\sum_{y\in Y}(-1)^{v\cdot y}W_{B_y}(u)
 \qquad(v\in Y).
\]
These are all eight Fourier coefficients of the function $y\mapsto W_{B_y}(u)$ on $Y$. Inversion makes that function zero at every $y$. By Lemma~\ref{lem:subtransform}, $u\notin B_y^\perp$ for every $y$. Formula~\eqref{eq:dualunion} now forces $u\in S\setminus\{0\}$. Including the zero differences, $Q+Q\subseteq S$.

Lemma~\ref{lem:Q} gives $Q=C+a$. Conversely, every such product is a translate of the already verified spectrum $\Lambda_0$, so Lemma~\ref{lem:affine} proves that it is a spectrum. The 64 translates $C+a$ are distinct by the no-period argument in Proposition~\ref{prop:allT}. The product contains zero precisely when $a\in C$, yielding eight normalized spectra.
\end{proof}
Together with Proposition~\ref{prop:allT}, this proves Theorem~\ref{thm:example}. In particular, the spectrum classification is not a search through a sample of spectra: it excludes every other possibility analytically.

\subsection{An exact norm corollary}
For a subset $A\subseteq\F^n$, define its normalized \emph{Wiener norm} by
\[
 \W(A)=\sum_{\xi\in\F^n}|\widehat{\one_A}(\xi)|
       =2^{-n}\sum_\xi|W_A(\xi)|.
\]
This definition uses only a finite sum of absolute values. For a spectral set $A$, let $\wsp(A)$ be the minimum of $\W(\Lambda)$ over all its spectra. There are finitely many subsets of the ambient group, so this minimum exists.

\begin{corollary}\label{cor:norm}
Every spectrum of the explicit set $E$ has Wiener norm $15/8$. Consequently $\wsp(E)=15/8$.
\end{corollary}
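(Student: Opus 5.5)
The plan is to compute $\W$ for the single spectrum $\Lambda_0=C\times Y$ and transfer the value to all the others. Proposition~\ref{prop:allL} shows that every spectrum of $E$ has the form $(C+a)\times Y=\Lambda_0+(a,0)$. By Lemma~\ref{lem:subtransform}, translating a function multiplies its transform by a character, which takes the values $\pm1$. Hence $|W_{\Lambda_0+(a,0)}(\xi)|=|W_{\Lambda_0}(\xi)|$ for every $\xi\in G$. So all 64 spectra have the same Wiener norm, and the minimum $\wsp(E)$ equals that common value.

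For $\Lambda_0$, formula~\eqref{eq:productW} gives $W_{\Lambda_0}(u,v)=8W_C(u)$ when $v=0$ and $W_{\Lambda_0}(u,v)=0$ otherwise. Therefore
\[
 \sum_{\xi\in G}|W_{\Lambda_0}(\xi)|=8\sum_{u\in H}|W_C(u)|.
\]
By \eqref{eq:WC}, $|W_C(u)|$ depends only on $\wt(u)$. For weights $0,1,\dots,6$ its values are $8,4,4,0,0,4,4$. Grouping by weight, with $\binom6w$ vectors of weight $w$, gives
\[
 \sum_{u\in H}|W_C(u)|=8+6\cdot4+15\cdot4+6\cdot4+1\cdot4=120.
\]
Thus $\sum_\xi|W_{\Lambda_0}(\xi)|=960$. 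Dividing by $2^9=512$ gives $\W(\Lambda_0)=960/512=15/8$.

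No step is a real obstacle. The only points needing care are the normalization, since the ambient group is $\F^9$ so the factor is $2^{-9}$, and the use of the completeness in Proposition~\ref{prop:allL}. That completeness is what lets the computation for one spectrum determine $\wsp(E)$ exactly, rather than only bound it from above.
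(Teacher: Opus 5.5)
Your proposal is correct and matches the paper's own proof. Both arguments reduce to $\Lambda_0$, using the fact that translation changes transform values only by signs together with the completeness in Proposition~\ref{prop:allL}. Both then evaluate $\sum_\xi|W_{\Lambda_0}(\xi)|=960$; the paper writes this sum as $64+28\cdot32$ over the support $S\times\{0\}$ instead of grouping by weight, which is the same count.
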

\begin{proof}
Translation multiplies transform values by signs and hence preserves their absolute values. It suffices to use $\Lambda_0$. Equations~\eqref{eq:WC}--\eqref{eq:productW} give one transform value of magnitude 64, at zero, and 28 values of magnitude 32; all remaining values vanish. Therefore
\[
 \W(\Lambda_0)=\frac{64+28\cdot32}{512}=\frac{15}{8}.
\]
Proposition~\ref{prop:allL} classifies all spectra, so this calculation is the exact global minimum, not merely an upper bound from a selected spectrum. It is the norm of spectra, not the norm of the domain, that is being minimized.
\end{proof}

\subsection{Aperiodicity and the periodic tiling property}\label{sec:periodic}
The complement classification also excludes a weaker kind of algebraic
structure. A \emph{period} of a set $A$ is a vector $p$ with $A+p=A$.
The set is \emph{periodic} if it has a nonzero period, and
\emph{aperiodic} otherwise. For example, $\{00,10\}\subset\F^2$
has period $10$, whereas a singleton is aperiodic. Every nontrivial
linear subspace is periodic, since each of its vectors preserves it.

Following Fan and Zhang~\cite{FZ}, a finite abelian group has the
\emph{periodic tiling property} if every aperiodic tile has a periodic
tiling complement. This concerns all possible complements of a tile;
an aperiodic complement in one particular tiling does not disprove it.
Goyal~\cite{Goyal} emphasizes this property for a related example,
compared with ours in Appendix~\ref{app:comparison}.

\begin{corollary}\label{cor:PT}
The set $E$ and every tiling complement of $E$ are aperiodic.
Consequently $\F^9$ does not have the periodic tiling property.
\end{corollary}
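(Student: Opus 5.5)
The argument has two parts: aperiodicity of the complements, which follows from the complement classification, and aperiodicity of $E$ itself, which follows from the layer structure. The failure of the periodic tiling property is then immediate.

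\emph{Complements.} By Proposition~\ref{prop:allT}, every tiling complement of $E$ is $T_0+g$. A period of $T_0+g$ is a period of $T_0$, since $T_0+g+p=T_0+g$ is equivalent to $T_0+p=T_0$. The proof of Proposition~\ref{prop:allT} already shows that $T_0$ has no nonzero period. Such a period must have vertical coordinate zero, because $T_0$ lies in the single layer $H\times\{0\}$. It must then preserve $C$, and by Lemma~\ref{lem:relation} $C$ has no nonzero period, since its translation would split $C$ into four pairs with equal sums. Hence every complement is aperiodic.

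\emph{The domain.} Suppose $E+p=E$ with $p=(u,v)$ and $p\neq0$. Translation by $p$ carries the layer $B_y\times\{y\}$ onto $(B_y+u)\times\{y+v\}$, so we need $B_y+u=B_{y+v}$ for every $y$. Both sides have eight points, one is a coset of $B_y$ and the other is the subspace $B_{y+v}$. A coset equals a subspace only when it contains zero, which gives $u\in B_y$ and $B_y=B_{y+v}$ for all $y$.

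If $v=0$, then $u\in\bigcap_y B_y$. This intersection is zero, because $A_0\cap D_0=\{0\}$ by Lemma~\ref{lem:table}. So $p=0$, a contradiction. If $v\neq0$, the condition $B_y=B_{y+v}$ says that translation by $v$ preserves the set $P=\{0,1,2,4,6\}$ of $A$-labels. This is impossible because $|P|$ is odd, exactly as in the proof of Proposition~\ref{prop:diff}. Therefore $E$ is aperiodic.

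\emph{Conclusion.} $E$ is an aperiodic tile by Proposition~\ref{prop:tile}, and by the previous two steps none of its complements is periodic. Hence $\F^9$ fails the periodic tiling property as defined above.

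No step is genuinely hard. The one point needing care is the coset-versus-subspace comparison in the domain step: a coset $B_y+u$ that is itself a subspace must contain zero, so $u\in B_y$.
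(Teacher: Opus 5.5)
Your proof is correct and follows the paper's argument: the complement step is the same, and for the domain you reach $u\in B_y$ and $B_y=B_{y+v}$ for all $y$ exactly as the paper does. The only variation is how you rule out $v\neq0$. You use the odd size of the label set $P=\{0,1,2,4,6\}$, which tacitly needs that no $A_i$ equals a $D_k$; this is guaranteed by Lemma~\ref{lem:table}. The paper instead notes that $A_1$ occurs only once in the layer list. Either works, and in both versions $u\in A_0\cap D_0=\{0\}$ finishes the argument.
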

\begin{proof}
Suppose $(u,v)\in H\times Y$ is a period of $E$. Comparing its layers gives
$B_y+u=B_{y+v}$ for every $y$. If a translate $B+u$ of a linear subspace
is another linear subspace $B'$, then $0\in B'$ implies $u\in B$,
so $B'=B$. Hence $B_y=B_{y+v}$ for every $y$.
The subspace $B_1=A_1$ occurs only once in \eqref{eq:layers}, so $v=0$.
Now $u\in B_y$ for every $y$. In particular,
$u\in A_0\cap D_0=\{0\}$ by Lemma~\ref{lem:table}. Thus $E$ is aperiodic.

Proposition~\ref{prop:allT} proves that every complement is a translate
of $C\times\{0\}$ and that this set has no nonzero period.
A translation preserves periods, since $(A+g)+p=A+g$ is equivalent
to $A+p=A$. Therefore every complement is aperiodic as well.
\end{proof}

\section{The positive results in small dimensions}
\label{sec:positive-elementary}

We now turn from the example to the positive direction.  The main point of
this section is that almost all of that direction can be proved with very
small configurations.  We give elementary proofs for every tile in
at most seven dimensions, and for every cardinality except sixteen in
eight dimensions.  The remaining sixteen-point assertion is identified
separately below; its proof in \cite{FI} uses the published classification
of perfect binary codes of length fifteen.

We recall two linear-algebra terms. The \emph{span} of a set is the set
of its linear combinations. The \emph{affine span} of a nonempty set $A$
is $a+\spanof(A+a)$ for any $a\in A$; it is the smallest affine subspace
containing $A$. For example, $\{00,10,01\}$ has affine span $\F^2$.
A \emph{linear complement} of $V\le G$ is a subspace $U$ such that
each vector of $G$ is uniquely $v+u$. One obtains it by extending a basis
of $V$ to a basis of $G$ and spanning the added vectors.

The following two Fourier consequences will be used below. Their proofs
are included to keep the small-complement argument independent of any
general theorem about spectral sets.
\begin{lemma}\label{lem:tilingfourier}
If $A\oplus T=\F^n$, then $W_A(u)W_T(u)=0$ for $u\ne0$.
If $K\le\F^n$, then $A\oplus K=\F^n$ if and only if $K^\perp$
is a spectrum of $A$.
\end{lemma}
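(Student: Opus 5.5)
For the first assertion I will compute the convolution $\one_A*\one_T(x)=\sum_{y}\one_A(y)\one_T(x+y)$. Since $A\oplus T=\F^n$, every $x$ has exactly one representation $x=a+t$, so this function is identically one. The transform of a convolution is the product of transforms: substitute $z=x+y$ in
\[
 \sum_x\sum_y\one_A(y)\one_T(x+y)(-1)^{u\cdot x}
\]
and split the character multiplicatively, which gives $W_A(u)W_T(u)$. The transform of the constant function $\one_{\F^n}$ is $2^n$ at $u=0$ and zero elsewhere, by Lemma~\ref{lem:inversion}. Comparing the two sides at $u\neq0$ yields $W_A(u)W_T(u)=0$.

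For the second assertion I will first match the cardinalities. Lemma~\ref{lem:tiling} requires $|A||K|=2^n$. Also $|K^\perp|=2^{n-\dim K}$, so the condition $|K^\perp|=|A|$ needed in the square-matrix criterion of Lemma~\ref{lem:spectral} is the same counting condition. I will handle the size mismatch explicitly in each direction: in the forward direction it comes from the tiling count, and in the reverse direction from the fact that a spectrum has exactly $|A|$ elements.

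Under this counting hypothesis, Lemma~\ref{lem:spectral}(3) says that $K^\perp$ is a spectrum exactly when $W_A(\lambda+\lambda')=0$ for distinct $\lambda,\lambda'\in K^\perp$. Since $K^\perp$ is a subspace, its differences are exactly its nonzero elements, so the condition becomes $W_A=0$ on $K^\perp\setminus\{0\}$.

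\begin{itemize}
\item \emph{Tiling implies spectrum.} By the first assertion with $T=K$, we have $W_A(u)W_K(u)=0$ for $u\ne0$. Lemma~\ref{lem:subtransform} gives $W_K(u)=|K|\ne0$ for $u\in K^\perp$, so $W_A$ vanishes on $K^\perp\setminus\{0\}$.
\item \emph{Spectrum implies tiling.} I will use the final clause of Lemma~\ref{lem:spectral}: $A$ is a spectrum of $K^\perp$, so $W_{K^\perp}(a+a')=0$ for distinct $a,a'\in A$. By Lemma~\ref{lem:subtransform} and $(K^\perp)^\perp=K$, the transform $W_{K^\perp}$ is nonzero exactly on $K$. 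Hence $(A+A)\cap K=\{0\}$, and Lemma~\ref{lem:tiling} gives $A\oplus K=\F^n$.
\end{itemize}

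I expect no real obstacle. The only care needed is bookkeeping: the sizes must match before Lemma~\ref{lem:spectral} applies, and in the reverse direction the transform of $K^\perp$ must be used rather than that of $A$.
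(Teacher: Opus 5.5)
Your proof is correct and follows the paper's argument essentially step for step. The convolution framing of the first assertion is just a repackaging of the paper's direct expansion $W_A(u)W_T(u)=\sum_{a\in A,t\in T}(-1)^{u\cdot(a+t)}=\sum_x(-1)^{u\cdot x}$. For the second assertion you use the same ingredients as the paper: $W_K=|K|$ on $K^\perp$ forces $W_A=0$ on $K^\perp\setminus\{0\}$; the dual row-orthogonality $W_{K^\perp}(a+a')=0$ together with $\supp W_{K^\perp}=K$ gives $(A+A)\cap K=\{0\}$; and the cardinality match $|K^\perp|=|A|\iff|A||K|=2^n$ lets the two lemmas apply.
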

\begin{proof}
Unique representation and the character product identity give
\[
 W_A(u)W_T(u)=\sum_{a\in A,t\in T}(-1)^{u\cdot(a+t)}
            =\sum_{x\in\F^n}(-1)^{u\cdot x}=0\quad(u\ne0).
\]
If $A\oplus K=\F^n$, then $|K^\perp|=2^n/|K|=|A|$.
For nonzero $u\in K^\perp$, the identity above and $W_K(u)=|K|$
give $W_A(u)=0$. The difference of two distinct points of $K^\perp$
is such a $u$, so Lemma~\ref{lem:spectral} proves spectrality.
Conversely, if $K^\perp$ is a spectrum of $A$, row orthogonality gives
$W_{K^\perp}(a+a')=0$ for distinct $a,a'\in A$. Its transform is
nonzero exactly on $(K^\perp)^\perp=K$. Thus $(A+A)\cap K=\{0\}$,
and the spectral cardinality gives $|A||K|=2^n$. Apply
Lemma~\ref{lem:tiling}.
\end{proof}

\subsection{A seven-coordinate auxiliary problem}

We first prove a small fact about subsets of $\mathbb F_2^7$.  It is often
phrased in the language of error-correcting codes, but all we need is a
partition by translates of the following eight-point set:
\[
 R=\{0,e_1,\ldots,e_7\}.
\]
The Hamming distance between two vectors is the number of coordinates
where they differ, namely the Hamming weight of their sum.  Thus $R$ is
exactly the set of vectors at distance at most one from zero.  For example,
$e_1+e_2$ has distance two from zero and distance one from $e_1$.
A set $D$ with $R\oplus D=\mathbb F_2^7$ is called a
\emph{perfect one-error-correcting code}: every vector lies at distance
at most one from exactly one point of $D$.  The terminology will not be
needed in the argument.

\begin{lemma}\label{lem:perfect-seven-elementary}
If $R\oplus D=\mathbb F_2^7$, then $D$ is an affine subspace of dimension
four.  More precisely, after translating $D$ to contain zero, there is a
labeling $p_1,\ldots,p_7$ of the seven nonzero vectors of $\mathbb F_2^3$
such that
\[
 D=\left\{x\in\mathbb F_2^7:\sum_{i=1}^7 x_i p_i=0\right\}.
\]
\end{lemma}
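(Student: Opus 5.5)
Normalize $D$ by translation so that $0\in D$. We have $|D|=2^7/8=16$. By Lemma~\ref{lem:tiling}, $(R+R)\cap(D+D)=\{0\}$. Since $R+R$ consists of all vectors of weight at most two, every nonzero element of $D+D$ has weight at least three. In particular, distinct codewords are at distance at least three. The plan is to show that $D$ is closed under addition. Once that is known, $D$ is a $4$-dimensional linear subspace meeting $R+R$ only in zero. The description by the $p_i$ then follows: choose a linear surjection $\F^7\to\F^3$ with kernel $D$ and let $p_i$ be the image of $e_i$. These images are nonzero because $e_i\notin D$. They are distinct because $e_i+e_k\notin D$. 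Hence they are exactly the seven nonzero vectors of $\F^3$.

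To get linearity, I would first count weights by a Fourier argument, avoiding coding-theory folklore. Lemma~\ref{lem:tilingfourier} gives $W_R(u)W_D(u)=0$ for $u\ne0$. Here $W_R(u)=8-2\wt(u)$, which vanishes exactly when $\wt(u)=4$. Hence $W_D$ is supported on $\{0\}\cup\{u:\wt(u)=4\}$. The weights themselves I would obtain by local counting, as in the classical Lloyd-type argument, applied to each translate $D+d$ with $d\in D$. Each translate $D+d$ is also a complement of $R$, so it suffices to study the neighbourhood of $0$.

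The counting goes as follows. Every weight-two vector lies at distance one from exactly one codeword, and that codeword must have weight three. Each weight-three codeword covers three weight-two vectors, so there are exactly $\binom72/3=7$ codewords of weight three, and their supports form a Steiner triple system on seven points, which is a Fano plane. Next, a weight-three vector is either a codeword or is covered by a codeword of weight four. The weight-three vectors not in $D$ number $35-7=28$, and each weight-four codeword covers four of them, giving $7$ codewords of weight four. The remaining codeword count is $16-1-7-7=1$. No codeword can have weight five or six, because each weight-five or weight-six vector is already at distance one from a weight-four or weight-seven codeword. So the last codeword is $\one$. The weight-four codewords must be complements of the Fano lines. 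This is the step I expect to be the main obstacle. I would argue it with the minimum distance: a weight-four codeword must meet each Fano line in at most one point, and then parity forces it to be the complement of a line.

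Since the same local analysis holds around every codeword $d$, I would then conclude closure. The set $D$ is exactly $\{0,\one\}$, together with the seven Fano lines and their complements. Summing two lines of a Fano plane gives the complement of the third line through their common point, so this explicit set is closed under addition. Therefore $D$ is a linear subspace of dimension four. The Fano plane can then be coordinatized by $\F^3\setminus\{0\}$ so that lines are triples summing to zero. This yields the labeling $p_1,\ldots,p_7$ and the displayed description of $D$ directly.
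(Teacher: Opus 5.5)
The step you flag as the main obstacle rests on a false claim. Identify codewords with their supports. Minimum distance three between a weight-four codeword $c$ and a line $\ell$ gives $\wt(c+\ell)=7-2|c\cap\ell|\ge3$, that is, $|c\cap\ell|\le2$: the support of $c$ contains no line. It does \emph{not} give $|c\cap\ell|\le1$. That condition is impossible for any set with two or more points, since any two of its points lie on a common line. The true codewords violate it: the complement of a line $\ell_0$ meets each of the six other lines in exactly two points. So your parity step has nothing to act on.

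The missing argument is a count, as in the paper. Each line lies in exactly four $4$-sets, and no $4$-set contains two lines, because two lines cover at least five points. Hence $28$ of the $35$ four-sets contain a line. The remaining seven are exactly the line complements; a line complement contains no line because any two lines meet. Since there are exactly seven weight-four codewords, they are precisely these seven sets.

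Your exclusion of weight six is circular as written. A weight-six vector is never at distance one from a weight-four vector, so ``at distance one from a weight-seven codeword'' presupposes $\one\in D$. You can repair it by ball counting, which also proves your weight-five claim. The seven weight-four codewords have disjoint radius-one balls, each containing three weight-five vectors, so together they contain all $21=\binom75$ of them. A codeword of weight five or six would therefore put some weight-five vector into a second ball.

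Once $\one\in D$ is proved this way for every complement of $R$ containing zero, your remark about the translates $D+d$ gives $D+\one=D$. This identifies the weight-four codewords as the line complements and bypasses the obstacle entirely.

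Finally, ``which is a Fano plane'' is asserted without proof, whereas the paper derives the forced list of lines. You do not actually need it. Your closure argument uses only that each point lies on three lines and any two lines meet, and both follow from the triple-system property. Your surjection with kernel $D$ then yields the $p_i$ without coordinatizing the plane. With these repairs your route is a valid alternative to the paper's, which instead checks that its sixteen explicit points lie in the sixteen-point kernel of an explicitly labeled map.
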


The idea is that the weight-three members of $D$ determine all of $D$.
They form a small arrangement of seven triples, and the partition property
forces this arrangement uniquely up to relabeling.

\begin{proof}
Translate $D$ so that $0\in D$.  Counting the partition gives $|D|=16$.
Distinct points of $D$ have distance at least three: if two centers differ
in one coordinate, one center lies in both of their radius-one balls; if
they differ in two coordinates, changing just one of those coordinates
gives a vector at distance one from both.  Either possibility contradicts
the partition by the translates $R+d$.  In particular, $D$ contains no vectors of weight one or two.

Consider a vector of weight two, with support $\{i,j\}$.  It must be at
distance one from a member of $D$ of weight three: the only other possible
weights are one and two, already excluded.  Consequently there is exactly
one triple $L\subseteq\{1,\ldots,7\}$ containing the pair $\{i,j\}$
whose indicator vector belongs to $D$.  Call these triples \emph{lines}.
There are $\binom72/\binom32=7$ lines.  Two different lines share at
most one point, since a pair cannot occur twice.

Here is an explicit determination of all seven lines.  Label one line
$123$, choose a fourth point $4$, and call the third points on the lines
through $14$ and $24$ respectively $5$ and $6$.  These are distinct new
points: every other choice repeats a pair already used.  The line through
$34$ must then have the remaining point $7$ as its third point.  The same
no-repeated-pair rule forces the remaining lines.  The resulting list is
\[
 123,\quad145,\quad246,\quad347,\quad167,\quad257,\quad356.       \tag{*}
\]
For example, the third point on the line through $16$ cannot be $2,3,4,$
or $5$, since each would reuse a pair containing $1$; it must be $7$.
The lines through $25$ and $35$ are then forced in the same way.
The displayed seven triples use every pair exactly once, so the list is
complete.

Every point belongs to three lines: its six pairs with the other points
are grouped two at a time.  A fixed line therefore meets the six other
lines, two at each of its three points.  In particular, any two lines
intersect.

A set of four coordinates that contains a line cannot be the support of
a member of $D$, since its distance from that line's indicator is one.
There are $7\cdot4=28$ such four-sets: a four-set cannot contain two lines,
as two lines have a five-point union.  The other seven four-sets are
exactly the complements of the seven lines.  Indeed, a line complement
cannot contain a line, because any two lines meet.

There are $\binom73-7=28$ triples that are not lines.  The indicator of
any such triple must be at distance one from a weight-four member of $D$;
a weight-two member is impossible and the triple itself is not in $D$.
Each allowable four-set contains four such triples.  Two different
allowable four-sets intersect in two points, because their complementary
lines have a five-point union.  Thus each nonline triple belongs to
exactly one allowable four-set, and all seven allowable four-sets are
forced to occur in $D$.

We have found fifteen points of $D$: zero, the seven line indicators,
and their seven complements.  No weight-five point can be added: every
five-set contains a line (otherwise each of its five four-subsets would
be a line complement, whereas two line complements meet in only two
points), and its distance from that line indicator would be two.
No weight-six point can be added either: if its missing coordinate is
$i$, choose a line through $i$; that line's complement lies inside the
six-set, at distance two.  The last point must therefore be the all-ones
vector.

Finally, give the points the labels
\[
 p_1=a,\quad p_2=b,\quad p_3=a+b,\quad p_4=c,\quad
 p_5=a+c,\quad p_6=b+c,\quad p_7=a+b+c,
\]
where $a,b,c$ are the coordinate vectors of $\mathbb F_2^3$.
Each line in (*) has label sum zero.  The sum of all seven labels is
also zero, because each coordinate occurs four times.  Hence all sixteen
points found above lie in the kernel of the displayed linear map.
The map is onto, so its kernel has dimension $7-3=4$ and contains exactly
sixteen points.  This proves the claimed equality.
\end{proof}

\begin{lemma}\label{lem:small-tile-elementary}
Every tile with at most eight points in a vector space over $\F$ has a
linear-subspace tiling complement.
\end{lemma}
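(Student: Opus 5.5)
The plan is to reduce to the case where $A$ spans the ambient space, and then, for eight-point tiles, to transport the problem to Lemma~\ref{lem:perfect-seven-elementary} by a linear surjection from $\F^7$. Let $A\subseteq G=\F^n$ be a tile with $|A|\le8$. Then $|A|=2^k$ with $k\le3$, since $|A||T|=2^n$. By Lemma~\ref{lem:affine} we may translate so that $0\in A$. Let $V=\spanof(A)$.

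\textbf{Step 1: reduce to $V$.} First we show that $A$ tiles $V$. If $A\oplus T=G$, then for each coset $V+g$ every point of $V+g$ is uniquely $a+t$, and necessarily $t\in V+g$ because $a\in V$. Hence $A\oplus\big((T\cap(V+g))+g'\big)=V$, where $g'\in T\cap(V+g)$ is a chosen point; this set contains zero. Second, suppose $K'\le V$ satisfies $A\oplus K'=V$, and let $U$ be a linear complement of $V$ in $G$. Then $K=K'+U$ is a subspace and $A\oplus K=G$ by unique decomposition $G=V\oplus U$. So it suffices to find a subspace complement inside $V$. By Lemma~\ref{lem:tiling}, this is the same as finding a linear map $\ell:V\to\F^k$ that is injective on $A$: its kernel $K'$ has dimension $\dim V-k$ and meets $A+A$ only in zero.

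\textbf{Step 2: small cases.} If $|A|\le2$, then $A$ is itself a subspace and equals $V$, so $K'=0$. If $|A|=4$, write $A=\{0,a,b,c\}$. If $\dim V=2$, again $A=V$. If $\dim V=3$, the vectors $a,b,c$ form a basis, and sending them to the three nonzero vectors of $\F^2$ defines an $\ell$ that is injective on $A$.

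\textbf{Step 3: the eight-point case, and the main difficulty.} Here $\dim V$ can be anywhere from $3$ to $7$. Not every spanning eight-point set admits an injective linear map to $\F^3$, so the tiling hypothesis must genuinely be used; this is the key point. Write $A=\{0,a_1,\dots,a_7\}$ and let $\pi:\F^7\to V$ be the linear surjection with $\pi(e_i)=a_i$. Then $\pi$ maps $R=\{0,e_1,\dots,e_7\}$ bijectively onto $A$. Let $T'$ be a complement of $A$ in $V$ with $0\in T'$, as produced in Step~1, and put $D=\pi^{-1}(T')$. We claim $R\oplus D=\F^7$. Indeed, for $x\in\F^7$, a representation $x=r+d$ with $r\in R$ and $d\in D$ corresponds exactly to a representation $\pi(x)=\pi(r)+t$ with $t\in T'$. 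Since $\pi(x)$ has a unique such decomposition and $\pi|_R$ is a bijection onto $A$, the element $r$ is unique, and then so is $d=x+r$.

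Now $0\in D$, so Lemma~\ref{lem:perfect-seven-elementary} applies without further translation: $D=\ker L$, where $L(x)=\sum_i x_ip_i$ and the $p_i$ are distinct nonzero vectors of $\F^3$. Since $0\in T'$, we have $\ker\pi\subseteq D=\ker L$, so $L$ factors as $L=\ell\circ\pi$ for a linear map $\ell:V\to\F^3$. This $\ell$ satisfies $\ell(0)=0$ and $\ell(a_i)=p_i$, so it is injective on $A$. By Step~1, $\ker\ell$ followed by the extension $K'+U$ gives the required linear-subspace complement. The one point to check carefully is the pullback tiling claim; after that, the factorization through $\pi$ is routine.
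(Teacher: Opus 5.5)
Your proof is correct and follows essentially the same route as the paper: reduce to the span $V$, handle four points by mapping $a,b,c$ to the nonzero vectors of $\F^2$, and for eight points pull a complement back through $e_i\mapsto a_i$ to a set $D$ with $R\oplus D=\F^7$ and apply Lemma~\ref{lem:perfect-seven-elementary}. The only difference is the final step. You normalize $0\in T'$, so that $\ker\pi\subseteq D=\ker L$, and factor the labeling map through $\pi$ to obtain a map $V\to\F^3$ injective on $A$. The paper instead derives $\ker F\subseteq L$ from the coset structure of $D$ and checks directly, by uniqueness and counting, that $F(L)$ is a complement. Your $\ker\ell=\pi(D)=T'$ also shows that the normalized complement you started from is already a subspace.
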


This is the small-tile result of Cohen, Litsyn, Vardy, and Z\'emor
\cite{CLVZ}.  The following proof supplies the details needed here,
using the preceding seven-coordinate calculation rather than an external
classification.

\begin{proof}
The size of a tile divides the size of its ambient space, so the possible
sizes are $1,2,4,8$.  Translate the tile $A$ to contain zero and work first
in its linear span $V$.  If $A\oplus T=G$, then
$A\oplus(T\cap V)=V$, since both $x$ and its summand from $A$ lie in $V$
whenever $x\in V$.  A subspace complement in $V$ can be enlarged by any
linear complement of $V$ in $G$.

For a one-point set in its span, the span is $\{0\}$ and its
complement there is $\{0\}$. For a two-point set containing zero, its
span is the set itself, again with complement $\{0\}$.  A four-point set is
either a two-dimensional subspace or has the form $\{0,a,b,c\}$ with
$a,b,c$ independent.  In the latter case, map $a,b,c$ to the three nonzero
vectors of $\mathbb F_2^2$ and extend linearly.  This map is a bijection
on $A$, so its kernel is a subspace complement.

Now suppose $A=\{0,a_1,\ldots,a_7\}$ has eight points.  Define an onto
linear map $F:\mathbb F_2^7\to V$ by $F(e_i)=a_i$, and put
$D=F^{-1}(T\cap V)$.  The unique decomposition of $F(x)$ in
$A\oplus(T\cap V)$ selects a unique $r\in R$ for which $x+r\in D$.
Thus $R\oplus D=\mathbb F_2^7$.  By
Lemma~\ref{lem:perfect-seven-elementary}, $D$ is a coset of a
four-dimensional subspace $L$.  For $k\in\ker F$ we have $D+k=D$, since $F(x+k)=F(x)$.
A coset $d_0+L$ is preserved by translation by $k$ exactly when $k\in L$.
Thus $\ker F\subseteq L$.  Translating $D$ to $L$ preserves
the tiling with $R$.  It follows that $F(L)$ complements $A$ in $V$.
For completeness, if $F(r)=F(r')+F(\ell)$ with $r,r'\in R$ and
$\ell\in L$, then $r+r'+\ell\in\ker F\subseteq L$; uniqueness in
$R\oplus L$ gives $r=r'$.  Also
$|A||F(L)|=8\cdot2^{4-(7-\dim V)}=|V|$, proving the tiling.
\end{proof}

\subsection{Small tiling complements}

The roles of a tile and its complement are not interchangeable when the
conclusion asks that one specified factor be a subspace.  The next result
therefore requires a separate argument.

\begin{lemma}\label{lem:common-complement}
Two subspaces $U,V$ of the same dimension in a finite-dimensional vector
space have a common linear complement.
\end{lemma}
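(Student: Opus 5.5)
We prove this standard fact by induction on the codimension $n-k$, where $G$ has dimension $n$ and $\dim U=\dim V=k$. The aim is to build the common complement one vector at a time. At each stage we choose a vector lying outside both subspaces, so that adding it to both keeps them of equal dimension.

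If $k=n$, then $U=V=G$ and $\{0\}$ is a common complement. If $k<n$, we first find a vector $w\notin U\cup V$. The key step is that a vector space is never the union of two proper subspaces, and we argue this directly. If $U\subseteq V$, then equal dimensions give $U=V$, and any $w\notin U$ works. Otherwise pick $u\in U\setminus V$ and $v\in V\setminus U$, which exist by the symmetric argument when neither subspace contains the other. Set $w=u+v$. If $w\in U$, then $v=w+u\in U$, a contradiction; if $w\in V$, then $u\in V$, again a contradiction. This works over $\F$ as well as over any field, and it is the only place a field-size issue could arise; with two subspaces it does not.

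Next put $U'=U+\spanof\{w\}$ and $V'=V+\spanof\{w\}$. Since $w\notin U$ and $w\notin V$, both have dimension $k+1$ and the same codimension, one less than before. By induction they have a common linear complement $W'$. We claim $W=W'+\spanof\{w\}$ is a common complement of $U$ and $V$. We have $W'\cap\spanof\{w\}=\{0\}$ because $w\in U'$ and $U'\cap W'=\{0\}$. Thus $\dim W=n-k-1+1=n-k$.

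It remains to check $U\cap W=\{0\}$. Suppose $x=u=w'+\epsilon w$ with $u\in U$, $w'\in W'$ and $\epsilon\in\F$. Then $w'=u+\epsilon w\in U'\cap W'=\{0\}$, so $u=\epsilon w$. Since $w\notin U$, this forces $\epsilon=0$ and hence $u=0$. The same argument applies to $V$. Finally, the dimension count $\dim U+\dim W=n$, together with trivial intersection, gives unique decomposition; this is the definition of a linear complement used in the paper. The main obstacle is only the avoidance step above; everything else is bookkeeping with rank–nullity.
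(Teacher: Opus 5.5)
Your proof is correct, but it is organized differently from the paper's. The paper gives a one-shot construction. It extends a basis of $U\cap V$ by $u_1,\ldots,u_k$ to a basis of $U$ and by $v_1,\ldots,v_k$ to a basis of $V$, and notes that the three lists together form a basis of $U+V$. Comparing coefficients shows that $L=\langle u_1+v_1,\ldots,u_k+v_k\rangle$ meets $U$ and $V$ only in zero, and the complement is $L+R_0$, with $R_0$ any complement of $U+V$. You instead build the complement one vector at a time by induction on codimension, using only the fact that a space is not the union of two proper subspaces.

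The two proofs are closer than they look. Your vector $w=u+v$ is the same diagonal device as $u_i+v_i$. Choosing $u_1+v_1,\ldots,u_k+v_k$ and then a basis of $R_0$ is one run of your recursion, and it outputs exactly $L+R_0$.

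The paper's version is explicit and needs no induction. Yours isolates the single step where the number of subspaces matters, which shows why the lemma is limited to two subspaces over $\F$. The three lines of $\F^2$ cover the plane and have no common complement. This is why the $r=7$ case of Proposition~\ref{prop:small-partner-elementary} uses $d\le8$ to keep at most two slices. Over a field with at least $m$ elements, your induction combined with the general avoidance lemma gives a common complement for any $m$ subspaces of equal dimension.

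Over a general field, write $w-u$ and $u-\epsilon w$ in place of your characteristic-two sums; nothing else changes.
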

\begin{proof}
Let $D=U\cap V$, choose a basis of $D$, and extend it by
$u_1,\ldots,u_k$ to a basis of $U$ and by $v_1,\ldots,v_k$ to a basis
of $V$.  The union of these three lists is a basis of $U+V$.  The space
$L=\langle u_1+v_1,\ldots,u_k+v_k\rangle$ meets both $U$ and $V$
only in zero, as comparison of the coefficients of the $v_i$, or of the
$u_i$, shows.  If $R_0$ is any linear complement of $U+V$ in the ambient
space, then $L+R_0$ is disjoint from both $U$ and $V$ and has the required
complementary dimension.
\end{proof}

We also need the elementary Fourier-support observation of
Ferguson and Iosevich \cite[Lemma 2.3]{FI}.  Recall that the support
of a function is the set where it is nonzero.

\begin{lemma}\label{lem:support-plane-elementary}
If $\varnothing\ne A\ne\mathbb F_2^n$ and $|A|\ne2^{n-1}$, then
$\operatorname{supp}W_A$ contains a two-dimensional subspace.
\end{lemma}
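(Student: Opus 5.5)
The plan is to pass to the $\pm1$-valued function $g=2\cdot\one_A-1$ and exploit the identity $g^2=1$ on the Fourier side. The point $0$ always lies in $\supp W_A$, because $W_A(0)=|A|>0$. It therefore suffices to find nonzero $\xi\neq\eta$ such that $\xi$, $\eta$ and $\xi+\eta$ all lie in $N=\supp W_A\setminus\{0\}$. Then $\{0,\xi,\eta,\xi+\eta\}$ is the required two-dimensional subspace.

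\emph{Step 1: the transform of $g$.} Lemma~\ref{lem:inversion} gives $W_1=2^n$ at zero and $0$ elsewhere. Hence $W_g=2W_A$ away from $0$, so $\supp W_g\setminus\{0\}=N$. At zero, $W_g(0)=2|A|-2^n$, and this is nonzero precisely because $|A|\neq 2^{n-1}$. This is the only place the cardinality hypothesis enters.

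\emph{Step 2: $N$ is nonempty.} If $N$ were empty, the inversion formula \eqref{eq:inversion} would make $\one_A$ constant. That would force $A=\varnothing$ or $A=\F^n$, both of which are excluded. (Equivalently, Parseval gives $\sum_{\xi\neq0}W_A(\xi)^2=|A|(2^n-|A|)>0$. The direct inversion argument avoids having to prove Parseval separately.)

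\emph{Step 3: the convolution identity.} Since $g(x)^2=1$ for every $x$, expanding $W_{g^2}(\eta)$ via inversion and interchanging the finite sums, exactly as in the proof of Lemma~\ref{lem:inversion}, gives
\[
 \sum_{\xi\in\F^n}W_g(\xi)\,W_g(\xi+\eta)=2^n\,W_{g^2}(\eta)=2^n W_1(\eta),
\]
and this vanishes for every $\eta\neq0$. Now fix $\eta\in N$, which exists by Step 2. The terms with $\xi=0$ and $\xi=\eta$ contribute $2W_g(0)W_g(\eta)$. This is nonzero by Step 1 and because $\eta\in N$. The total is zero, so some $\xi\notin\{0,\eta\}$ has $W_g(\xi)W_g(\xi+\eta)\neq0$. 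Then $\xi\in N$ and $\xi+\eta\in N$, with $\xi+\eta\neq 0$, and together with $\eta\in N$ this completes the argument.

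I expect the main obstacle to be Step 3: deriving the convolution identity cleanly from the paper's conventions, with the correct normalization and with real rather than $\F$ arithmetic. I would verify it by substituting $W_g(\xi)=\sum_x g(x)(-1)^{\xi\cdot x}$ and $W_g(\xi+\eta)=\sum_{x'}g(x')(-1)^{(\xi+\eta)\cdot x'}$. Summing over $\xi$ first then forces $x=x'$ by character orthogonality. What remains is $2^n\sum_x g(x)^2(-1)^{\eta\cdot x}=2^{n}\sum_x(-1)^{\eta\cdot x}$, which is $0$ for $\eta\neq0$. Everything else in the argument is bookkeeping.
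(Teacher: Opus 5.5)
Your proof is correct and is essentially the paper's argument. The paper squares the idempotent $\one_A$ and separates the terms $t=0,s$ to get $(1-2\delta)a(s)=\sum_{t\notin\{0,s\}}a(t)a(s+t)$; you use $g=2\one_A-1$ with $g^2=1$ instead, and your factor $W_g(0)=2|A|-2^n=-2^n(1-2\delta)$ is the same quantity, so the nonconstancy step and the extraction of the plane $\{0,\xi,\eta,\xi+\eta\}$ coincide.
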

\begin{proof}
Write $a(u)=2^{-n}W_A(u)$ and $\delta=a(0)=|A|/2^n$.  Fourier inversion
and the identity $\mathbf1_A^2=\mathbf1_A$ give
\[
 a(s)=\sum_t a(t)a(s+t).
\]
Indeed, square $\mathbf1_A(x)=\sum_u a(u)(-1)^{u\cdot x}$, group terms
with the same sum of indices, and compare Fourier coefficients.  Since
$\mathbf1_A$ is not constant, there is an $s\ne0$ with $a(s)\ne0$.
Separating the terms $t=0,s$ gives
\[
 (1-2\delta)a(s)=\sum_{t\notin\{0,s\}}a(t)a(s+t).
\]
The left side is nonzero.  Therefore one summand on the right is nonzero,
and its index $t$ gives the plane $\{0,s,t,s+t\}$ inside the support.
\end{proof}

\begin{proposition}\label{prop:small-partner-elementary}
Suppose $E\oplus T=\mathbb F_2^d$, where $d\le8$ and $|T|\le8$.
Then $E$ has a linear-subspace tiling complement.
\end{proposition}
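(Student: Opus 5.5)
The plan is to reduce everything to a statement about the \emph{Fourier support} of the small factor $T$, which is where $|T|\le8$ gets used. Translate $T$ to contain zero and let $V$ be its linear span, so $\dim V\le7$. For each coset $x+V$ put $E_x=E\cap(x+V)$. Since $T\subseteq V$, the equality $E\oplus T=\F^d$ restricts to
\[
 (E_x+x)\oplus T=V\qquad\text{for every coset }x+V.
\]
It therefore suffices to find one subspace $L\le V$ with $(E_x+x)\oplus L=V$ for \emph{all} $x$ simultaneously. Then $E\oplus(L+L_0)=\F^d$, where $L_0$ is any linear complement of $V$; this is the same enlargement step as in the proof of Lemma~\ref{lem:small-tile-elementary}.

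I will express the simultaneous condition on the Fourier side, working inside $V$ with its dual identified with $\F^{\dim V}$ after a choice of basis.
\begin{itemize}
\item By the first part of Lemma~\ref{lem:tilingfourier}, each slice satisfies $W_{E_x+x}(u)W_T(u)=0$ for $u\ne0$. So every slice transform vanishes on $\supp W_T\setminus\{0\}$.
\item For a subspace $L$, the condition $(E_x+x)\oplus L=V$ is equivalent to $W_{E_x+x}$ vanishing on $L^\perp\setminus\{0\}$ together with the cardinality condition. This follows from $W_{E_x}W_L=|V|\,\delta_0$ and Lemma~\ref{lem:subtransform}.
\item Hence it is enough to find a subspace $P$ of the dual of $V$ with $\dim P=\log_2|T|$ and $P\setminus\{0\}\subseteq\supp W_T$. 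Taking $L=P^\perp$ then gives $|L|=|V|/|T|=|E_x|$, so the cardinality condition holds for every slice.
\end{itemize}

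I will then split into cases by $|T|\in\{1,2,4,8\}$.
\begin{itemize}
\item $|T|=1$: take $P=\{0\}$.
\item $|T|=2$, say $T=\{0,t\}$: the line spanned by $t$ is itself a subspace complement of $E$, since it is a subspace containing $T$ as its only two points. This can be checked directly.
\item $|T|=4$ and $|V|\ne8$: Lemma~\ref{lem:support-plane-elementary} supplies the required plane $P$ in $\supp W_T$.
\item $|T|=4$ and $|V|=8$: if $T$ is a subspace, take $L=T$. Otherwise $T=\{0,a,b,c\}$ with $a,b,c$ a basis of $V$. Then $W_T$ can be written down explicitly and a plane found in its support by inspection.
\end{itemize}

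The main obstacle is $|T|=8$, where I need a three-dimensional subspace inside $\supp W_T$. Here $\dim V\in\{3,\dots,7\}$, and Lemma~\ref{lem:support-plane-elementary} only yields a plane. I would use the structure established in the proof of Lemma~\ref{lem:small-tile-elementary}: $T=\{0,a_1,\dots,a_7\}$ is the image $F(R)$ of the seven-coordinate ball under a linear map $F$ whose kernel lies in the Hamming-type space $L$ of Lemma~\ref{lem:perfect-seven-elementary}.

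Pulling back along $F$, we get $W_T(\xi)=W_R(F^{\mathsf T}\xi)$, and $W_R(\eta)=8-2\wt(\eta)$ vanishes exactly at weight four. So the task is to find a three-dimensional subspace of $\xi$'s for which $F^{\mathsf T}\xi$ never has weight four unless $\xi=0$. The image of $F^{\mathsf T}$ is $(\ker F)^\perp$, which contains $L^\perp$, the three-dimensional simplex code; its nonzero words all have weight four, so that natural choice fails. I would therefore try a different three-dimensional subspace of $(\ker F)^\perp$ containing no weight-four word. If that search fails for some $\dim V$, I would fall back to the alternative route: fix the subspace complement $K$ of $T$ from Lemma~\ref{lem:small-tile-elementary}, and use Lemma~\ref{lem:common-complement} to compare $K$ with candidate complements of the slices. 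This is the step whose details I expect to need the most care, and it is where the restriction $d\le8$ should enter, through the resulting bound on $\dim V$ and $|E_x|$.
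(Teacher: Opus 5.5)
Your proposal has a genuine gap, together with two bookkeeping errors in the reduction that come from mixing up the roles of the two factors.

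\textbf{The reduction.} Since $T\subseteq V$, every coset of $V$ meets $E$. So a subspace $L\le V$ complementing every slice $E_x+x$ already satisfies $E\oplus L=\F^d$. Enlarging it by a complement $L_0$ of $V$ multiplies the number of sums by $|L_0|=2^{d-\dim V}$. That step is correct in Lemma~\ref{lem:small-tile-elementary} only because there the \emph{tile} lies in $V$.

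Next, a subspace $P\le V^*$ with $P\setminus\{0\}\subseteq\supp W_T$ acts as a spectrum of each slice. It must therefore have $|P|=|E_x|=|V|/|T|$, that is, $\dim P=\dim V-\log_2|T|$. Then $L=P^\perp$ has $|L|=|T|$, as the count $|E_x||L|=|V|$ requires. Your choice $\dim P=\log_2|T|$ gives $|L|=|E_x|$ instead.

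This already breaks your case $|T|=4$, $\dim V=3$. In dual coordinates $W_T(u)=1+(-1)^{u_1}+(-1)^{u_2}+(-1)^{u_3}$, with support $\{0,e_1,e_2,e_3,e_1+e_2+e_3\}$, and this contains no plane. Only a line is needed there, or simply use $\{0,a,b,a+b\}\subseteq T+T$ as the paper does. With the correct target, the cases $|T|=8$, $\dim V\le5$ are easy: take $\{0\}$, a line through a nonzero coefficient, or Lemma~\ref{lem:support-plane-elementary}.

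\textbf{The gap: $|T|=8$ with $\dim V\in\{6,7\}$.} Here no subspace extracted from $\supp W_T$ can work. Such a $P$ complements every slice however many slices there are, so the method never uses $d\le8$. But for $d=9$, the set $E$ of Theorem~\ref{thm:example} tiles with $T=C\times\{0\}$, where $\dim V=6$, and has no subspace complement.

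You can also see this directly. $\supp W_C=S$ contains no three-dimensional subspace $K$: otherwise $K+K=K\subseteq S$, so $K$ would be a translate of $C$ by Lemma~\ref{lem:Q}, whereas $C$ is not affine. For $\dim V=7$, in suitable dual coordinates $\supp W_T$ is the set of vectors of weight $\ne4$ in $\F^7$. Every four-dimensional subspace of $\F^7$ contains a weight-four vector. (Its even-weight part has dimension at least three, and the intersecting-edges step of Lemma~\ref{lem:Q} rules out a three-dimensional space whose nonzero weights all lie in $\{2,6\}$.) So your search in $(\ker F)^\perp$ fails in exactly these cases. Your fallback does not specify what is compared with what, and Lemma~\ref{lem:common-complement} handles only two subspaces.

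\textbf{The missing idea} is to study the \emph{slices}, not $T$. Lifting each $E_x+x$, which is a complement of $T$ in $V$, through $F$ gives a perfect code. By Lemma~\ref{lem:perfect-seven-elementary}, each slice is then an affine subspace, with one Hamming-type space per slice rather than a single $L$.

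For $\dim V=6$, the unique affine dependency of $T$ has length $4$, $6$ or $8$. Lengths $4$ and $6$ give an explicit three-dimensional subspace of $\supp W_T$. Length $8$ means $T=C$ in suitable coordinates. In that case each slice is a coset of a three-dimensional $B_x$ with $B_x\setminus\{0\}\subseteq Z$. Since $d\le8$, there are at most $2^{d-6}\le4$ slices, covering at most $28<35$ points of $Z$. An uncovered $z\in Z$ yields a three-dimensional $K$ with $K\cap Z=\{z\}$, which meets every $B_x$ only in zero.

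For $\dim V=7$, each slice is a coset of a four-dimensional subspace. Since $d\le8$ there are at most two slices, and Lemma~\ref{lem:common-complement} supplies a common complement.

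So $d\le8$ enters through the number $2^{d-\dim V}$ of slices, not through $\dim V$ or $|E_x|$ as you suggest.
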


The only substantial case is $|T|=8$.  We organize it by the dimension of
$\langle T\rangle$.  In dimensions six and seven the number of cosets in
the ambient space is small enough to choose a single complement that
works in all of them.  This is precisely where the bound $d\le8$ enters.

\begin{proof}
Translate $T$ to contain zero.  If $|T|=1$ or $2$, it is itself a
subspace.  If $|T|=4$, choose independent $a,b\in T$.  Then
$P=\{0,a,b,a+b\}\subseteq T+T$.  Hence $(E+E)\cap P=\{0\}$, and
$|E||P|=2^d$ proves $E\oplus P=\mathbb F_2^d$.

Suppose henceforth that $|T|=8$.  Put $V=\langle T\rangle$ and
$r=\dim V$, so $3\le r\le7$.  We use the character group $V^*$,
meaning the vector space of linear maps $\alpha:V\to\mathbb F_2$.
The restriction map $\rho:(\mathbb F_2^d)^*\to V^*$ is onto with
kernel of dimension $d-r$.  If a subspace $L\le V^*$ of dimension
$r-3$ lies in $\operatorname{supp}W_T$, then $\rho^{-1}(L)$ has
dimension $d-3$ and is a spectrum of $E$: the tiling Fourier identity
makes $W_E$ vanish at each of its nonzero points.  The annihilator of
this subspace spectrum is the required three-dimensional complement.

For $r=3$, take $L=\{0\}$.  For $r=4$, the nonconstant indicator of $T$
has a nonzero Fourier coefficient at some nonzero $u$; take
$L=\{0,u\}$.  For $r=5$, apply
Lemma~\ref{lem:support-plane-elementary}, since $|T|=8\ne16$.

An \emph{affine dependency} among points $t_i$ means coefficients
$\varepsilon_i\in\F$, not all zero, with
$\sum_i\varepsilon_i=0$ and $\sum_i\varepsilon_i t_i=0$.
For example, the four points $0,e_1,e_2,e_1+e_2$ have such a dependency
using all four coefficients equal to one. Points are \emph{affinely
independent} when no such dependency exists. Equivalently, their
augmented columns $(1,t_i)$ are linearly independent.

For $r=6$, the eight points of $T$ have exactly one nonzero affine
dependency.  To see this, form the seven-by-eight matrix with columns
$(1,t)$ for $t\in T$; its rank is $7$, so its nullspace has dimension
one.  Its nonzero vector selects an even number of points with zero sum.
Two selected points are impossible because the points are distinct.
Thus the dependency has length $4$, $6$, or $8$.

If the length is $4$ or $6$, choose the origin to be a point outside the
dependency and remove one point of the dependency.  The remaining seven
points are affinely independent; their six nonzero points can be taken
as a basis.  In suitable coordinates the set is therefore
\[
 \{0,e_1,\ldots,e_6,e_1+\cdots+e_k\},\qquad k=3\text{ or }5.
\]
Its transform at $u$ is
\[
 W_T(u)=7-2\operatorname{wt}(u)+(-1)^{u_1+\cdots+u_k}.
\]
For $k=3$, take $L=\langle e_1,e_2,e_4\rangle$ in the dual
coordinates; for $k=5$, take $L=\langle e_1,e_2,e_6\rangle$.
In the order $0,a,b,a+b,c,a+c,b+c,a+b+c$ of these three-generator
spaces, the transform values are respectively
\[
 8,\ 4,\ 4,\ 4,\ 6,\ 2,\ 2,\ 2.
\]
All are nonzero, so the preceding subspace-spectrum argument applies.

If the dependency has length eight, translate one point to zero and
remove one other point. The six remaining nonzero vectors form a basis:
a dependence among the seven retained augmented columns would omit the
removed point, whereas the unique nonzero dependency uses all eight.
The removed point is the sum of those six basis vectors, so
\[
 T=C=\{0,e_1,\ldots,e_6,e_1+\cdots+e_6\}\subset V.
\]
In each coset $v+V$ of $V$, translate the slice $E\cap(v+V)$ back into
$V$ and call it $A_v$.  Then $A_v\oplus C=V$ and $|A_v|=8$.
Let $F:\mathbb F_2^7\to V$ send the first six coordinate vectors to
$e_1,\ldots,e_6$ and the seventh to $e_1+\cdots+e_6$.
The same lifting argument as in
Lemma~\ref{lem:small-tile-elementary} shows that
$R\oplus F^{-1}(A_v)=\mathbb F_2^7$.
Lemma~\ref{lem:perfect-seven-elementary} makes each $A_v$ an affine
three-dimensional subspace, say $b_v+B_v$.

Let $Z=V\setminus(C+C)$, the 35 vectors of weights three or four.
Since $A_v$ tiles with $C$, its seven nonzero differences belong to $Z$;
that is, $B_v\setminus\{0\}\subseteq Z$.  There are
$2^{d-6}\le4$ slices, so
\[
 \left|\bigcup_v(B_v\setminus\{0\})\right|\le4\cdot7=28<35.
\]
Choose $z\in Z$ outside this union.  If $z$ has weight three, let $K$
be the coordinate space on its three nonzero positions.  If $z$ has
weight four, let $i,j$ be its two zero positions and put
$K=\langle z,e_i,e_j\rangle$.  In either case $\dim K=3$ and
$K\cap Z=\{z\}$.  In the second case, the other six nonzero vectors
have weights $1,1,2,5,5,6$; the first case is equally immediate from the
coordinate description.  Hence $K$ meets every $B_v$ only in zero.
A difference of points from different slices is not in $V$, so it is not
in $K$ either.  Thus $(E+E)\cap K=\{0\}$, proving that $K$ complements
$E$.

Finally suppose $r=7$.  Then $T$ consists of zero and a basis of $V$.
Each slice of $E$ in a coset of $V$ is a translate of a four-dimensional
subspace by Lemma~\ref{lem:perfect-seven-elementary}.  Since $d\le8$,
there are at most two slices.  Apply
Lemma~\ref{lem:common-complement} inside $V$ to their two direction
spaces (or choose any complement if there is only one).  The resulting
three-dimensional space $K\le V$ avoids all nonzero within-slice
differences; cross-slice differences lie outside $V$.  Therefore $K$
complements $E$, as required.
\end{proof}

\begin{corollary}\label{cor:elementary-positive-range}
Every tile in $\mathbb F_2^d$ with $d\le7$ has a linear-subspace
complement.  In $\mathbb F_2^8$, the same conclusion holds for every
tile whose cardinality is not sixteen.  It also holds for a sixteen-point
tile whose affine span has dimension at most seven.
\end{corollary}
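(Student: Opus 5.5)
The proof is a case analysis on the size of the tile, with complement sizes handled by Proposition~\ref{prop:small-partner-elementary} and small tiles by Lemma~\ref{lem:small-tile-elementary}. Let $A\oplus T=\F^d$. Then $|A||T|=2^d$, so one of $|A|$ and $|T|$ is at most $2^{\lfloor d/2\rfloor}$. I will use the trivial reduction: if $A$ is the whole group or a single point, it is a subspace coset or $\{0\}$ works as complement.

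\emph{Case $d\le7$.}
\begin{itemize}
\item If $|A|\le8$, Lemma~\ref{lem:small-tile-elementary} gives a subspace complement.
\item If $|T|\le8$, Proposition~\ref{prop:small-partner-elementary} applies, since $d\le7\le8$.
\item Otherwise $|A|,|T|\ge16$, so $2^d\ge256$ and $d\ge8$, which is impossible.
\end{itemize}

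\emph{Case $d=8$.}
\begin{itemize}
\item If $|A|\le8$, use Lemma~\ref{lem:small-tile-elementary}.
\item If $|A|\ge32$, then $|T|\le8$, and Proposition~\ref{prop:small-partner-elementary} gives the complement.
\end{itemize}
This leaves only $|A|=16$, which is exactly the excluded cardinality.

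\emph{Sixteen-point tile with affine span of dimension $r\le7$.} Translate $A$ so that $0\in A$ and let $V=\spanof(A)$, with $\dim V=r$. As in the proof of Lemma~\ref{lem:small-tile-elementary}, intersecting gives $A\oplus(T\cap V)=V$. So $A$ is a tile inside $V\cong\F^r$, and $r\le7$.

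By the first case, $A$ has a subspace complement $K$ in $V$. Enlarge $K$ by a linear complement $U$ of $V$ in $\F^8$. Then $A\oplus(K+U)=\F^8$: each vector decomposes uniquely as $v+u$ with $v\in V$ and $u\in U$, and then $v$ decomposes uniquely in $A\oplus K$. Finally, undoing the translation preserves the complement, by Lemma~\ref{lem:affine}.

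There is no real obstacle. The one point to check is the counting showing that both factors cannot exceed eight when $d\le7$, together with the identification of the leftover case as $|A|=16$ in dimension eight.
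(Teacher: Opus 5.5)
Your proposal is correct and follows the paper's proof. The power-of-two counting puts one factor at size at most eight, handled by Lemma~\ref{lem:small-tile-elementary} or Proposition~\ref{prop:small-partner-elementary}, and leaves only the $16\cdot16$ case in dimension eight; a sixteen-point tile of affine span at most seven is then handled by translating, restricting to its span, applying the first case, and enlarging by a linear complement of $V$. Your opening ``trivial reduction'' is unnecessary, and slightly garbled, since a single point needs the whole group as complement; both cited results already cover sizes one and $2^d$.
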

\begin{proof}
A tile and its complement have power-of-two cardinalities with product
$2^d$.  For $d\le7$, at least one factor has at most eight points.
If the tile does, use Lemma~\ref{lem:small-tile-elementary}; if its
complement does, use Proposition~\ref{prop:small-partner-elementary}.
For $d=8$ the only case not covered is a factorization $16\cdot16$.
If a sixteen-point tile lies in an affine space of dimension at most
seven, translate and restrict to its span, use the first assertion, and
extend the resulting subspace complement to the whole space.
\end{proof}

\subsection{The remaining sixteen-point assertion}
The elementary arguments above leave a tile of size sixteen whose affine
span has dimension eight. We state the result needed for this case
and explain its dependence on the classification of perfect codes.

\begin{theorem}[External input: Ferguson--Iosevich]\label{thm:external16}
Every sixteen-point tile of $\F^8$ has a four-dimensional linear-subspace
tiling complement.
\end{theorem}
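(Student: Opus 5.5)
The plan is to reduce the statement to a property of perfect one-error-correcting codes of length fifteen, exactly as the proof of Lemma~\ref{lem:small-tile-elementary} reduced eight-point tiles to length seven. We then read off a four-dimensional subspace inside $T+T$ for a complement $T$. Let $E\subseteq\F^8$ be a sixteen-point tile with complement $T$, so $|T|=16$. By Corollary~\ref{cor:elementary-positive-range} we may assume the affine span of $E$ is all of $\F^8$. Translate so that $0\in E$ and $0\in T$, and write $E=\{0,a_1,\ldots,a_{15}\}$.

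\textbf{Step 1: lift to a perfect code.} Define the onto linear map $F:\mathbb F_2^{15}\to\F^8$ by $F(e_i)=a_i$, so $\dim\ker F=7$. Put $D=F^{-1}(T)$. The lifting argument of Lemma~\ref{lem:small-tile-elementary} shows that $R_{15}\oplus D=\mathbb F_2^{15}$ with $R_{15}=\{0,e_1,\ldots,e_{15}\}$. Thus $D$ is a perfect one-error-correcting code of length fifteen containing zero, and $D+k=D$ for every $k\in\ker F$. Hence the group of periods of $D$, its kernel in the coding sense, contains the seven-dimensional space $\ker F$.

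\textbf{Step 2: reduce to a subspace in $D+D$.} By Lemma~\ref{lem:tiling}, $(E+E)\cap(T+T)=\{0\}$. So any four-dimensional subspace $K\subseteq T+T$ satisfies $(E+E)\cap K=\{0\}$ and $|E||K|=2^8$. Therefore $K$ is a linear complement of $E$. It therefore suffices to find a linear subspace $L\le\mathbb F_2^{15}$ with
\[
 \ker F\subseteq L\subseteq D+D,\qquad \dim L=11.
\]
Then $F(L)\subseteq F(D)+F(D)=T+T$, and $\dim F(L)=11-7=4$.

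\textbf{Step 3: use the classification.} This is the main obstacle and the point where the cited classification of perfect binary codes of length fifteen enters. I would first try to show, from that classification, that every perfect code of length fifteen whose period group has dimension at least seven is a union of cosets of a linear Hamming code $L$ that contains the period group. If this holds, then $L\subseteq D+D$ automatically, since $0\in D$ gives $L\subseteq D$.

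If such a Hamming subcode does not always exist, the fallback is a finite check over the classified codes. The number of equivalence classes is large, but only those with kernel dimension at least seven matter. For each class and each seven-dimensional subspace $N$ of its kernel, one verifies that $D+D$ contains an eleven-dimensional subspace through $N$. Equivalently, one verifies that the quotient $D/N\subseteq\F^8$ has a four-dimensional subspace inside its difference set.

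Relabeling coordinates, which is the only freedom in $F$, preserves all of these properties. So checking one representative per equivalence class suffices. The difficult part is making this check conceptual rather than computational; the rest of the argument is the elementary lifting already used in dimension seven.
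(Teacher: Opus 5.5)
\textbf{The gap is in Step 2.} Your Step 1 lifting, and the use of the classification of perfect codes of length fifteen restricted to kernel dimension at least seven, follow the route the paper attributes to \cite{FI}. But an eleven-dimensional $L$ with $\ker F\subseteq L\subseteq D+D$ is the same as a four-dimensional subspace inside $T+T$. That is strictly stronger than the theorem, and it is false in general, even for tiles spanning $\F^8$.

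Here is a counterexample. Write $\F^8=H\times\F^2$ with $f=(1,0)$ and $g=(0,1)$. Split $A_0=P_0\sqcup P_1$ with $P_0=\{0,7,28,42\}$, and $D_0=P_0'\sqcup P_1'$ with $P_0'=\{0,14,19,41\}$. Put
\[
 E=(P_0\times\{0\})\cup(P_1\times\{f\})\cup(P_0'\times\{g\})\cup(P_1'\times\{f+g\}),\qquad T=C\times\{0,f\}.
\]
\begin{itemize}
\item \emph{Tiling.} For vertical coordinate $z\in\{0,f\}$ the contributing horizontal pieces reassemble $A_0\oplus C=H$. For $z\in\{g,f+g\}$ they reassemble $D_0\oplus C=H$ (Proposition~\ref{prop:tile}). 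Hence $E\oplus T=\F^8$.
\item \emph{Spanning.} The span of $E$ contains $(A_0+D_0)\times\{0\}=H\times\{0\}$ (Lemma~\ref{lem:table}), together with $(0,g)$ and $(27,f)$. So $E$ spans $\F^8$.
\item \emph{No four-dimensional subspace in $T+T$.} We have $T+T=S\times\{0,f\}$. A four-dimensional subspace of $H\times\langle f\rangle$ meets $H\times\{0\}$ in dimension at least three. That would give a three-dimensional subspace $U\subseteq S$ with $U+U\subseteq S$. By Lemma~\ref{lem:Q}, $U$ would be a translate of $C$, contradicting Proposition~\ref{prop:allT}.
\end{itemize}
So for $D=F^{-1}(T)$ and $N=\ker F$, your finite check fails. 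Your first hope in Step 3 fails for a simpler reason. Since $|L|=|D|=2^{11}$, the inclusion $L\subseteq D$ forces $D=L$ to be linear. The $D$ above is not even affine, because its image $T$ is not, yet its kernel contains $\ker F$.

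\textbf{The right certificate does not involve $D+D$.} Since $E+E=F(R_{15}+R_{15})$ and $\ker F\subseteq L$, we have $F(x)\in F(L)$ exactly when $x\in L$. So a four-dimensional $K=F(L)$ complements $E$ if and only if $L=F^{-1}(K)$ is an eleven-dimensional subspace containing $\ker F$ with no vectors of weight one or two. Equivalently, $L$ is a linear Hamming code through $N=\ker F$. This condition is necessary and sufficient, and it depends only on $N$.

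The complement $T$ and the classified code $D$ enter only to restrict which seven-dimensional $N$ can occur: those lying in the translation kernel of some perfect code. This is the shape of the quotient certificates the paper cites. They are subspace complements of the image of $R_{15}$ in $\mathbb F_2^{15}/N$, not subspaces inside the image of $D+D$.

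With Step 2 replaced in this way, your outline matches the paper's description. As in the paper, the verification over the classification remains an external input that you have not carried out.
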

This is \cite[Lemma~6.5]{FI}, with its finite input in
\cite[Lemma~6.4]{FI}. It is a component of Theorem~1.1 of that preprint, which gives
the equivalence of tiling, spectrality, and a subspace complement in
dimensions seven and eight. The latter full equivalence is cited as
background; only Theorem~\ref{thm:external16} is needed as an external
positive result in the present proof.

Here is the exact nature of the external step. Replace the seven-coordinate
set $R$ used above by $R_{15}=\{0,e_1,\ldots,e_{15}\}\subseteq\F^{15}$.
For a sixteen-point tile $A=\{0,a_1,\ldots,a_{15}\}$ spanning $\F^8$,
the linear map $F(e_i)=a_i$ lifts a tiling complement $T$ to a partition
$R_{15}\oplus F^{-1}(T)=\F^{15}$, by precisely the unique-decomposition
argument already used for $R$. The lifted set is a perfect code of length
fifteen. Its \emph{translation kernel} is
\[
 M=\{v:F^{-1}(T)+v=F^{-1}(T)\}.
\]
This is a subspace: zero preserves the set, and the sum of two preserving
translations preserves it. It contains $\ker F$, of dimension $15-8=7$.
The proof in \cite{FI} uses the complete classification of perfect codes of
length fifteen by \"Osterg\aa rd and Pottonen~\cite{OP}. Among the 5983
classes, 69 have a translation kernel of dimension at least seven; quotient
certificates supply subspace complements for all of these possibilities.
The classification's completeness is an external input, not a consequence
of verifying the individual certificates.

The elementary seven-coordinate proof above does not extend automatically
to fifteen coordinates: its assertion that every perfect code is affine
is false there. We do not reproduce the length-fifteen classification or
claim a replacement proof. Theorem~\ref{thm:external16} is the only
positive result whose proof is not reproduced here.
Question~8.3 of \cite{FI} asks for a proof of this assertion without
the complete classification.

\begin{corollary}\label{cor:positive8}
Every nonempty tile in $\F^d$ with $d\le8$ has a linear-subspace complement.
\end{corollary}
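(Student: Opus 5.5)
The corollary follows by combining Corollary~\ref{cor:elementary-positive-range} with Theorem~\ref{thm:external16}, so the plan is a case split on $d$ and on the tile's cardinality.

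Let $A\subseteq\F^d$ be a nonempty tile with $d\le8$.

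\emph{Case $d\le7$.} Here Corollary~\ref{cor:elementary-positive-range} already supplies a linear-subspace complement, and nothing more is needed.

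\emph{Case $d=8$, $|A|\ne16$.} Corollary~\ref{cor:elementary-positive-range} again applies directly. Recall why: the factorization $|A||T|=2^8$ forces one factor to have at most eight points. Lemma~\ref{lem:small-tile-elementary} then handles a small tile, and Proposition~\ref{prop:small-partner-elementary} handles a small complement.

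\emph{Case $d=8$, $|A|=16$.} This is the only remaining case, and Theorem~\ref{thm:external16} gives a four-dimensional linear-subspace complement $K$. No reduction to the span is needed, since that theorem is stated for every sixteen-point tile of $\F^8$. For robustness one can note the following. If the affine span of $A$ has dimension at most seven, the last clause of Corollary~\ref{cor:elementary-positive-range} already gives the conclusion elementarily. The external input is thus genuinely needed only for a full-dimensional affine span.

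Finally, check the sizes. $K$ has $2^8/16=16$ elements, consistent with Lemma~\ref{lem:tiling}. All other implicit requirements, such as nonemptiness and translation invariance of the tiling property, are built into the cited statements. Lemma~\ref{lem:affine} covers translation if one first normalizes $0\in A$.

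There is no real obstacle here. The only point needing care is that the case analysis be exhaustive: for $d=8$, the tile sizes $1,2,4,8,32,64,128,256$ are all covered by the elementary corollary, and $16$ is covered by the external theorem.
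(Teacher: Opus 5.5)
Your proposal is correct and follows the paper's proof: Corollary~\ref{cor:elementary-positive-range} handles every $d\le7$ and every tile size other than sixteen in $\F^8$, and Theorem~\ref{thm:external16} supplies the remaining sixteen-point case. Your remarks about the affine span and about normalizing $0\in A$ are harmless, but the argument does not need them.
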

\begin{proof}
For $d\le7$, use Corollary~\ref{cor:elementary-positive-range}.
In dimension eight, that corollary handles every size except sixteen,
and Theorem~\ref{thm:external16} handles the remaining size.
\end{proof}

Corollaries~\ref{cor:positive8} and~\ref{cor:PT} also show that nine
is the least positive dimension in which the periodic tiling property
fails over $\F$. Indeed, for $1\le d\le8$, an aperiodic tile $A$ is not
the whole group, since the whole group has nonzero periods. Its subspace
complement therefore has at least two elements and is periodic.
This minimality statement uses the same external sixteen-point input.

\subsection{Further elementary reductions of the remaining case}
The external assertion can be narrowed further. These reductions do not
replace it, but identify configurations a direct proof would still need
to handle. We use periods and aperiodicity as defined in
Section~\ref{sec:periodic}.

\begin{proposition}\label{prop:16reductions}
Suppose $A\oplus T=\F^8$ and $|A|=|T|=16$. Then $A$ has a subspace
complement if any of the following holds:
\begin{enumerate}
\item $A$ has affine span of dimension at most seven;
\item $T$ has affine span of dimension at most six;
\item either factor has a nonzero period;
\item the sum of all points of either factor is nonzero.
\end{enumerate}
\end{proposition}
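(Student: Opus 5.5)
The four criteria are handled in order, and the last one is reduced to the third. Criterion~(1) is exactly the final assertion of Corollary~\ref{cor:elementary-positive-range}, so nothing new is needed there.

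For criterion~(2), translate $T$ to contain zero, put $V=\langle T\rangle$ and $r=\dim V$, so $4\le r\le6$. I would reuse the subspace-spectrum argument from Proposition~\ref{prop:small-partner-elementary}, now aiming for a spectrum of $A$ of size $16$. It suffices to find a subspace $L\le V^*$ of dimension $r-4$ inside $\supp W_T$. Since $T\subseteq V$, the value $W_T(\xi)$ depends only on the restriction $\rho(\xi)$, so $\rho^{-1}(L)$ is a $4$-dimensional subspace on which $W_T$ never vanishes. By Lemma~\ref{lem:tilingfourier}, $W_A$ then vanishes at every nonzero point of $\rho^{-1}(L)$, so by Lemma~\ref{lem:spectral} it is a spectrum of $A$. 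Its annihilator is a subspace complement, by the second part of Lemma~\ref{lem:tilingfourier}. The choices of $L$ are as follows.
\begin{itemize}
\item If $r=4$, then $T=V$ is itself a subspace.
\item If $r=5$, take $L=\{0,u\}$ with $u\ne0$ a nonzero Fourier coefficient of the nonconstant indicator of $T$ in $V$.
\item If $r=6$, Lemma~\ref{lem:support-plane-elementary} applies in $V\cong\F^6$, since $16\ne32$ and $T\ne V$. It supplies a plane in $\supp W_T$.
\end{itemize}

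For criterion~(3), let $p\ne0$ be a period of one factor and let $\pi:\F^8\to\F^8/\langle p\rangle\cong\F^7$ be the quotient map. Because $p\in X+X$ for the periodic factor $X$, Lemma~\ref{lem:tiling} gives $p\notin Y+Y$ for the other factor $Y$. Hence $\pi$ is injective on $Y$, while $X=\pi^{-1}(\pi X)$. Counting then shows $\pi(A)\oplus\pi(T)=\F^7$. By Corollary~\ref{cor:elementary-positive-range} in dimension seven, $\pi(A)$ has a subspace complement $K'$. The lift depends on which factor is periodic.
\begin{itemize}
\item If $T$ is periodic, put $K=\pi^{-1}(K')$, a $4$-dimensional subspace. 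A relation $a+a'\in K$ forces $\pi(a)=\pi(a')$, so $a+a'\in\{0,p\}$. Since $p\notin A+A$, this gives $a=a'$, and Lemma~\ref{lem:tiling} shows $A\oplus K=\F^8$.
\item If $A$ is periodic, take $K\le\F^8$ to be any subspace that $\pi$ maps isomorphically onto $K'$, for instance the span of chosen preimages of a basis. Given $g$, write uniquely $\pi(g)=\pi(a)+k'$. Then $g+k\in\pi^{-1}(\pi a)\subseteq A$, and uniqueness follows the same way.
\end{itemize}
This lifting bookkeeping is the only place where care is needed; I expect it to be the main, though modest, obstacle.

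For criterion~(4), suppose $s=\sum_{a\in A}a\ne0$ and choose any $u$ with $u\cdot s=1$. Then the number of $a\in A$ with $u\cdot a=1$ is odd, so $W_A(u)=16-2\cdot(\text{odd})\equiv2\pmod4$ and in particular $W_A(u)\ne0$. By Lemma~\ref{lem:tilingfourier}, $W_T$ vanishes on the whole affine hyperplane $\{u:u\cdot s=1\}$. Thus $\supp W_T\subseteq s^\perp$, and Lemma~\ref{lem:period} makes $T$ invariant under $(s^\perp)^\perp=\langle s\rangle$. So $s$ is a nonzero period of $T$, and criterion~(3) applies. The argument with the roles of $A$ and $T$ exchanged produces a period of $A$ whenever $\sum_{t\in T}t\ne0$.
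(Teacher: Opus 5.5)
Your proposal is correct and follows the paper's proof essentially step for step. You handle (1) by Corollary~\ref{cor:elementary-positive-range}, (2) by lifting an $(r-4)$-dimensional subspace of $\supp W_T$ to a four-dimensional subspace spectrum, (3) by passing to the quotient by $\langle p\rangle$ with the same two-case lift, and (4) by the mod-$4$ congruence for $W_A$, which produces a period and reduces to (3). The differences are only cosmetic: you treat $r=4$ by noting that $T$ is already a subspace, and you check the lift in the $A$-periodic case by covering rather than by excluding differences, both equivalent to the paper's steps.
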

\begin{proof}
The first assertion is Corollary~\ref{cor:elementary-positive-range}.
For the second, translate $T$ into its span $V$, of dimension $r$ with
$4\le r\le6$. We seek a subspace of dimension $r-4$ in its Fourier
support on $V$. For $r=4$, take zero. For $r=5$, a nonzero coefficient
away from zero exists because the indicator is not constant; its span
is a line. For $r=6$, Lemma~\ref{lem:support-plane-elementary} gives a
plane, since $|T|=16\ne32$. Lift this frequency subspace by restriction
from the ambient dual. The lift has dimension $(8-r)+(r-4)=4$, lies
in the support of $W_T$, and is a spectrum of $A$ by
Lemma~\ref{lem:tilingfourier}. Its annihilator is a subspace complement.

For the third assertion let $p\ne0$ be a period and let
$q:\F^8\to\F^8/\langle p\rangle$ be the quotient map. The quotient
is a seven-dimensional vector space over $\F$, identified with $\F^7$
by any choice of basis. If $p$ is a period of $T$, then $p\in T+T$
and $p\notin A+A$. Thus $q$ is injective on $A$ and
$q(A)\oplus q(T)=\F^7$. The sizes are sixteen and eight: the fibers
of $T$ have size two, while those of $A$ have size one. The quotient
sums cover $\F^7$, and there are exactly $16\cdot8=128$ pairs, so
every quotient representation is unique. The dimension-seven
result supplies a subspace complement $L$ of $q(A)$. The inverse image
$q^{-1}(L)$ is a subspace complement of $A$: the unique quotient
representation selects a unique point of $A$, and its remaining difference
belongs to the inverse image.

If $p$ is a period of $A$, interchange the roles to obtain
$q(A)\oplus q(T)=\F^7$ with sizes eight and sixteen. Take a
four-dimensional complement $L$ of $q(A)$ and lift a basis of $L$
to $\F^8$. Its span $K$ projects bijectively onto $L$, so
$K\cap\langle p\rangle=\{0\}$. A difference in $(A+A)\cap K$
projects into $(q(A)+q(A))\cap L=\{0\}$ and hence is zero.
The cardinalities now prove $A\oplus K=\F^8$.

For the fourth assertion put $\sigma_A=\sum_{a\in A}a$ in $\F^8$.
For a frequency $u$, if $m_u$ counts the points $a$ with $u\cdot a=1$,
then $W_A(u)=16-2m_u$ and $m_u\equiv u\cdot\sigma_A\pmod2$.
Consequently
\[
 W_A(u)\equiv-2(u\cdot\sigma_A)\pmod4.
\]
If $\sigma_A\ne0$, this transform is nonzero whenever
$u\cdot\sigma_A=1$. The tiling identity forces $W_T$ to vanish at
all those frequencies. Its support therefore lies in
$\langle\sigma_A\rangle^\perp$. Lemma~\ref{lem:period} makes
$\sigma_A$ a nonzero period of $T$, reducing to the third assertion.
The same argument with the factors interchanged applies if the sum of
the points of $T$ is nonzero.
\end{proof}
Thus a direct proof may restrict attention to aperiodic $16$-by-$16$
tilings with zero sums of the points of both factors, where the proposed
tile spans eight dimensions and its complement spans seven or eight.
We make no claim that these restrictions alone settle the case.

\section{The threshold in every dimension}\label{sec:lifting}
We finish the proof of Theorem~\ref{thm:threshold}. It is not enough to
have a counterexample in dimension nine: the universal assertion might,
without a further argument, conceivably hold again in a larger dimension.
The following product lemma prevents that.

\begin{lemma}[Products with a whole vector space]\label{lem:product}
Let $A\subseteq V=\F^n$ be nonempty, let $U=\F^k$, and put
$A^*=A\times U\subseteq V\times U$.
\begin{enumerate}
\item If $A\oplus T=V$, then $A^*\oplus(T\times\{0\})=V\times U$.
\item If $A^*$ has a linear-subspace tiling complement, then $A$ has one.
\item If $\Lambda$ is a spectrum of $A$, then $\Lambda\times U$ is a
spectrum of $A^*$, with $U$ identified with its frequency space by the
dot product.
\end{enumerate}
\end{lemma}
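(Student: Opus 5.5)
We treat the three assertions separately. Parts (1) and (3) reduce to the product structure together with the criteria of Lemma~\ref{lem:tiling} and Lemma~\ref{lem:spectral}. Part (2) is the only one needing an idea, namely a reduction to a single slice.

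For (1), the sizes are $|A^*||T\times\{0\}|=|A|2^k|T|=2^{n+k}$. The difference sets are
\[
 A^*+A^*=(A+A)\times U,\qquad (T\times\{0\})+(T\times\{0\})=(T+T)\times\{0\}.
\]
Their intersection is $((A+A)\cap(T+T))\times\{0\}=\{0\}$, so Lemma~\ref{lem:tiling} gives the tiling. (Alternatively, one can write $(x,u)=(a,u)+(t,0)$ directly, using the unique decomposition $x=a+t$.)

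For (3), the sizes are $|\Lambda\times U|=|A|2^k=|A^*|$. The character of $(\lambda,\mu)$ at $(a,u)$ is $(-1)^{\lambda\cdot a}(-1)^{\mu\cdot u}$, so the character matrix of $(A^*,\Lambda\times U)$ is the Kronecker product of the character matrix $H_A$ of $(A,\Lambda)$ and the $2^k\times2^k$ character matrix $H_U$ of $U$. By Lemma~\ref{lem:spectral} we have $H_A^{\mathsf T}H_A=|A|I$, and by Lemma~\ref{lem:inversion} we have $H_U^{\mathsf T}H_U=2^kI$. Hence
\[
 (H_A\otimes H_U)^{\mathsf T}(H_A\otimes H_U)=|A|2^kI.
\]
Equivalently, $W_{A^*}(\xi,\eta)=W_A(\xi)W_U(\eta)$, and this vanishes at every nonzero difference of $\Lambda\times U$: either $\eta\ne0$, or $\eta=0$ and $\xi$ is a nonzero difference of $\Lambda$. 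Lemma~\ref{lem:spectral} then finishes the proof.

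For (2), let $K\le V\times U$ be a subspace with $A^*\oplus K=V\times U$. Its dimension is $n+k-(\log_2|A|+k)=n-\log_2|A|$. Let $\pi:V\times U\to V$ be the projection, and take $K'=\pi(K)$ as the candidate complement of $A$.
\begin{itemize}
\item By Lemma~\ref{lem:tiling}, $(A^*+A^*)\cap K=((A+A)\times U)\cap K=\{0\}$.
\item The kernel of $\pi$ restricted to $K$ is $K\cap(\{0\}\times U)$. This lies inside $(A+A)\times U$, because $0\in A+A$. So the kernel is trivial, and $\pi$ is injective on $K$, giving $|K'|=|K|$ and $|A||K'|=2^n$.
\item If $x\in(A+A)\cap K'$, then $x=\pi(x,u)$ for some $(x,u)\in K$. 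Since $(x,u)\in(A+A)\times U$ as well, it is zero, so $x=0$.
\end{itemize}
Lemma~\ref{lem:tiling} now gives $A\oplus K'=V$, and $K'$ is a linear subspace as the image of one.

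The main point to get right is the injectivity of the projection in (2). It uses exactly the fact that the full vertical factor $\{0\}\times U$ lies in $A^*+A^*$, and without it the dimension count would fail.
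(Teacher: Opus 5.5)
Your proposal is correct and follows the paper's proof essentially step for step: in (2) you project the subspace complement $K$ to $V$, use $\{0\}\times U\subseteq A^*+A^*$ to get injectivity, and finish with Lemma~\ref{lem:tiling}, exactly as the paper does. In (1) and (3) you repackage the paper's arguments (its direct decomposition $(x,u)=(a,u)+(t,0)$, and its factorization of the character inner product into a product of two sums) via the difference-set criterion and Kronecker products, which are equivalent formulations.
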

\begin{proof}
For the first assertion, uniquely write $x=a+t$ in $V$; then
$(x,u)=(a,u)+(t,0)$ is the unique product decomposition.

For the second, let $K\le V\times U$ complement $A^*$. Its difference
set is
\[
 A^*+A^*=(A+A)\times U.
\]
It contains $\{0\}\times U$, since $A$ is nonempty. The tiling
criterion therefore gives $K\cap(\{0\}\times U)=\{0\}$.
Horizontal projection $\pi:V\times U\to V$ is injective on $K$, so
$\pi(K)$ is a subspace of the same cardinality as $K$. If a nonzero
$x\in\pi(K)$ belonged to $A+A$, its unique lift $(x,u)\in K$ would
belong to $(A+A)\times U$, a contradiction. Thus
\[
 (A+A)\cap\pi(K)=\{0\},\qquad
 |A||\pi(K)|=|A||K|=|V|.
\]
The last equality follows from $|A||U||K|=|V||U|$.
Lemma~\ref{lem:tiling} now proves that $\pi(K)$ complements $A$.

For the third assertion, the inner product of the characters indexed by
$(\lambda,v)$ and $(\lambda',v')$ on $A\times U$ is
\[
 \left(\sum_{a\in A}(-1)^{(\lambda+\lambda')\cdot a}\right)
 \left(\sum_{u\in U}(-1)^{(v+v')\cdot u}\right).
\]
If the two frequencies differ, either the first factor vanishes by
spectrality or the second vanishes by character orthogonality.
The number of characters is $|\Lambda||U|=|A||U|$, as required.
\end{proof}

This lemma also explains directly how the dimension-eight positive
statement implies all lower-dimensional ones. If $A\subseteq\F^d$
tiles with $d<8$, then $A\times\F^{8-d}$ tiles $\F^8$ and has a
subspace complement by Corollary~\ref{cor:positive8}. Project that
complement using the second assertion to obtain a subspace complement
of $A$. This is an alternative to the elementary lower-dimensional proof
already given.

\begin{proof}[Proof of Theorem~\ref{thm:threshold}]
For $d\le8$, Corollary~\ref{cor:positive8} proves the assertion,
with precisely the external input stated in
Theorem~\ref{thm:external16}. For $d\ge9$, set
\[
 E_d=E\times\F^{d-9}.
\]
Theorem~\ref{thm:example} and Lemma~\ref{lem:product} show that $E_d$
is a tile and is spectral. If it had a linear-subspace complement, the
same lemma would give one for $E$, contradicting
Proposition~\ref{prop:allT}. Thus the universal subspace-complement
property fails in every dimension at least nine.
\end{proof}
The larger examples have $|E_d|=64\cdot2^{d-9}=2^{d-3}$, and every
tiling complement has eight points. In particular, any subspace
complement would have dimension three; the projection in the proof
preserves that dimension.

\section{Exact verification and the role of computation}\label{sec:verification}
The construction arose from a finite computational search. Its presentation
here was chosen so that Table~\ref{tab:spaces} and the odd-cardinality
argument give certificates that can be checked without software. The
proofs establish the complete classifications for this particular domain;
no classification of all spectral tiles is claimed.

The proof of Theorem~\ref{thm:example} does not depend on running a
program. Its finite data are the six rows of Table~\ref{tab:spaces};
the union count, cross intersections, and annihilator identity were all
justified in the text. A separate standard-library Python verifier is
included among the ancillary files to check the transcription and the
exact calculations.

Download \texttt{verify\_example.py} and \texttt{example.json} from the
ancillary files into the same directory, and run
\begin{center}\texttt{python3 verify\_example.py}\end{center}
there. Python 3.10 or later
suffices. The code uses integer arithmetic, with vector addition implemented
by XOR and $x\cdot y$ implemented by the parity of the number of one-bits
in the bitwise AND of $x$ and $y$. There are no random choices, external
solvers, search cutoffs, or network dependencies. The data include every
point of $E$, $T_0$, and $\Lambda_0$, as well as the bases and layer
assignment. The output includes every tiling complement and every spectrum.

The verifier checks:
\begin{enumerate}
\item the point counts, bases, layer assignment, both union identities,
and exact difference set;
\item every unique representation in the displayed tiling and every
entry of the integer Gram matrix for the displayed spectrum;
\item the eight normalized complements by an exhaustive increasing-order
search, and every one of their 512 distinct ambient translates;
\item the eight normalized horizontal spectrum factors, their 64 distinct
translates, and the integer Gram matrix and exact rational Wiener norm
for each resulting spectrum;
\item all three-dimensional subspaces of $\F^9$, none of which is a
complement.
\end{enumerate}
For the last check, the total number is
\[
 \frac{(512-1)(512-2)(512-4)}{(8-1)(8-2)(8-4)}=788{,}035.
\]
The numerator counts ordered independent triples in the ambient space:
a second vector avoids the two-point span of the first, and a third
avoids their four-point span. The denominator counts the ordered bases
inside one three-dimensional space. The program enumerates the unique
reduced-row-echelon basis of each subspace, allowing every assignment to
its free entries, and checks that the resulting total agrees with this
count. Row reduction over a field gives each subspace exactly one such
basis; thus the coverage has an independent linear-algebra justification.

The normalized eight-point searches each visit 125 recursion nodes and
find exactly eight sets. A set can be translated to contain zero without
changing its pairwise sums. The search then chooses its remaining points
in increasing order, retaining a new point only when all its differences
from previously chosen points are allowed. This includes every admissible
normalized set exactly once. The spectrum search is exhaustive only
\emph{after} the analytic reduction of Proposition~\ref{prop:allL}; it
is not represented as an enumeration of all 64-point subsets of $G$.

The verifier passes with both the ordinary interpreter and \texttt{python -O}. In total,
65 integer Gram matrices are checked (one initial check and one for each
of the 64 listed spectra), and all 788,035 subspaces are checked. The
program's explicit verification conditions remain active under
\texttt{python -O}. These computations independently check the individual
example and the stated finite enumerations. They neither establish a
classification of all nine-dimensional tiles nor replace the external
positive theorem in dimension eight.

The additional verifier \texttt{verify\_comparison.py}, together with
\texttt{comparison\_data.json} and the two files used above, checks the
explicit tables and structural hypotheses in Appendix~\ref{app:comparison}.
Run \texttt{python3 verify\_comparison.py} from their common directory.
It checks the common difference set, the annihilator union, aperiodicity,
all 512 complements and all 64 displayed spectra of Goyal's domain,
their exact norms, and the layer-direction multiplicities distinguishing
the domains. Exhaustiveness follows from the theorem in the appendix.
These are exact Python checks, not a replay of Goyal's Lean development.

\section{Further questions}
The construction leaves several focused problems.
\begin{question}
Can Theorem~\ref{thm:external16}, for sixteen-point tiles in $\F^8$,
be proved without the classification of perfect codes of length fifteen?
\end{question}
An affirmative answer would make the sharp-threshold proof given here
entirely elementary and self-contained. The elementary reductions in
Proposition~\ref{prop:16reductions} specify the remaining configurations;
the small-complement proof already avoids the order-32 sign-matrix
classification for the tiling direction. This question is the sixteen-point
part of Question~8.3 in our earlier preprint~\cite{FI}.

\begin{question}
Does every 64-point spectral subset of $\F^9$ tile?
\end{question}
Our example answers the stronger subspace-complement question negatively,
but tiles explicitly. It therefore supplies no negative answer to this
question. The distinction between a general complement and a subspace
complement must be preserved in any further search.

\begin{question}
Which 64-point spectral tiles in $\F^9$ have no subspace complement,
up to affine changes of coordinates, and how varied can their complete
families of spectra be?
\end{question}
Appendix~\ref{app:comparison} gives two affinely inequivalent domains
with exactly the same families of spectra and complements. Thus even
these two complete families together do not determine the affine class
of the domain. It remains to determine how many affine classes share
these families, and what other families can occur.
An accompanying question is which values of $\wsp$ occur for those
domains. Here the complete classification forces $15/8$; a list of some
spectra for a different domain would give only an upper bound on its
minimum.

\clearpage
\appendix
\section{A common class of constructions and comparison with Goyal}
\label{app:comparison}

\emph{Comparison added 30 October 2026.} The version of Goyal's project
used here is specified in the bibliography.

Goyal~\cite{Goyal} gives another 64-point spectral tile with no periodic
complement, and classifies its tiling complements. His construction uses
the same eight-point set $C$ as ours, but different layer subspaces.
We first isolate the part of our argument that applies to both
constructions. We then give complete finite data for his example and
prove that the two domains are not related by an affine change of
coordinates. All coordinates and Fourier normalizations remain those
of the main text.

\subsection{What the common difference set determines}

Recall $H=\F^6$, $Y=\F^3$, $G=H\times Y$, $S=C+C$, and
$Z=H\setminus S$. The next theorem shows that, within this class of
layer constructions, the difference set determines both entire
families of complements and spectra. The essential additional fact
for spectra is Lemma~\ref{lem:J}, which recovers the union of the
annihilators from the union of the layers.

\begin{theorem}[A common layer construction]\label{thm:common-layers}
Let $U_y\le H$ be a three-dimensional linear subspace for each $y\in Y$,
and put
\[
 A=\bigcup_{y\in Y}(U_y\times\{y\}).
\]
Suppose that
\begin{equation}\label{eq:common-missing}
 G\setminus(A+A)=(S\setminus\{0\})\times\{0\}.
\end{equation}
Then $A$ has 64 points, tiles $G$, and is spectral. Its tiling complements
are exactly
\[
 (C\times\{0\})+g\qquad(g\in G),
\]
and its spectra are exactly
\[
 (C+a)\times Y\qquad(a\in H).
\]
There are 512 distinct complements and 64 distinct spectra, with eight
of each containing zero and one translation class of each. No complement
is periodic or an affine subspace. Every spectrum has Wiener norm
$15/8$, so $\wsp(A)=15/8$.
\end{theorem}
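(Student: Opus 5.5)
The plan is to rerun the arguments of Sections~\ref{sec:differences}--\ref{sec:spectra}. There the specific layers $B_y$ entered only through the missing-difference formula \eqref{eq:missing} and the annihilator union \eqref{eq:dualunion}. Hypothesis~\eqref{eq:common-missing} replaces the first, and the first step is to recover the second from it.

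The first step is to unpack \eqref{eq:common-missing}. Vertical-zero differences of $A$ are exactly $\bigcup_y(U_y+U_y)\times\{0\}=\bigcup_y U_y\times\{0\}$. The hypothesis therefore says $\bigcup_y U_y=Z\cup\{0\}$, and that every $(x,z)$ with $z\ne0$ lies in $A+A$. In particular each $U_y\setminus\{0\}\subseteq Z$, so every nonzero vector of each $U_y$ has weight three or four. Lemma~\ref{lem:J} then gives $U_y^\perp=J(U_y)$, and since $J(Z)=Z$ we obtain $\bigcup_y U_y^\perp=J(Z\cup\{0\})=Z\cup\{0\}$, which is the analogue of \eqref{eq:dualunion}. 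Clearly $|A|=64$, since the layers are disjoint with eight points each.

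For tiling, $(U_y+U_y)\cap S=U_y\cap S=\{0\}$, so Lemma~\ref{lem:tiling} gives $U_y\oplus C=H$ and hence $A\oplus(C\times\{0\})=G$, exactly as in Proposition~\ref{prop:tile}. The complement classification is verbatim Proposition~\ref{prop:allT}. A normalized complement $T$ has nonzero differences outside $A+A$, so $T=Q\times\{0\}$ with $Q+Q\subseteq S$, and Lemma~\ref{lem:Q} makes $Q$ a translate of $C$. The count of 512, the eight complements containing zero, aperiodicity of $C\times\{0\}$, and non-affineness all come from the no-period argument for $C$ already given.

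For spectra, $\Lambda_0=C\times Y$ has transform supported in $S\times\{0\}$ by \eqref{eq:productW}, and this set meets $A+A$ only in $0$ by \eqref{eq:common-missing}. Lemma~\ref{lem:spectral} then makes $\Lambda_0$ a spectrum, and its translates are spectra by Lemma~\ref{lem:affine}. For an arbitrary spectrum $\Gamma$, row orthogonality and \eqref{eq:common-missing} put $\supp W_\Gamma$ inside $S\times\{0\}$, so Lemma~\ref{lem:period} gives $\Gamma=Q\times Y$ with $|Q|=8$. Column orthogonality applied to $(q+q',v)$ for all $v$, followed by inversion on $Y$, gives $q+q'\notin U_y^\perp$ for every $y$. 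The recovered annihilator union then forces $Q+Q\subseteq S$, and Lemma~\ref{lem:Q} finishes the classification. The count of 64 spectra, eight of them containing zero, and the Wiener norm $15/8$ follow as in Proposition~\ref{prop:allL} and Corollary~\ref{cor:norm}, since translation does not change $|W|$.

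The main obstacle is only the first step: we must make sure Lemma~\ref{lem:J}'s weight hypothesis is available for every layer, without the table. As shown above, it follows directly from the vertical-zero part of \eqref{eq:common-missing}. Everything else is a transcription of earlier proofs.
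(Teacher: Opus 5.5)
Your proposal is correct and follows essentially the same route as the paper. You recover $\bigcup_y U_y=Z\cup\{0\}$ from the vertical-zero part of the hypothesis, transfer it to the annihilators via Lemma~\ref{lem:J} and $J(Z)=Z$, and then rerun the complement and spectrum classifications through Lemma~\ref{lem:Q}, with the same counting, aperiodicity and Wiener-norm conclusions. The only cosmetic difference is in the tiling by $C\times\{0\}$: you check it layer by layer as in Proposition~\ref{prop:tile}, whereas the paper reads it off directly in $G$ from $(S\times\{0\})\cap(A+A)=\{0\}$ and Lemma~\ref{lem:tiling}.
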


\begin{proof}
The eight disjoint layers each have eight points. A sum of two points
of $A$ has vertical coordinate zero precisely when the points are in
the same layer, whose horizontal differences form $U_y+U_y=U_y$.
Taking that horizontal part of~\eqref{eq:common-missing} gives
\[
 \bigcup_y U_y=Z\cup\{0\}.
\]
Every nonzero vector of every $U_y$ therefore has weight three or four.
Lemma~\ref{lem:J}, and the fact that $J(Z)=Z$, give
\begin{equation}\label{eq:common-dual}
 \bigcup_y U_y^\perp=J\left(\bigcup_y U_y\right)=Z\cup\{0\}.
\end{equation}

Write $T_0=C\times\{0\}$. Its difference set is $S\times\{0\}$,
which meets $A+A$ only at zero by~\eqref{eq:common-missing}.
Since $|A||T_0|=64\cdot8=512$, Lemma~\ref{lem:tiling} proves
$A\oplus T_0=G$. For any other complement $T$, translate $T$ to contain
zero. Its nonzero points and all its nonzero differences must then lie
in $(S\setminus\{0\})\times\{0\}$. Thus $T=Q\times\{0\}$ with
$|Q|=8$ and $Q+Q\subseteq S$. Lemma~\ref{lem:Q} makes $Q$ a translate
of $C$. Undoing the normalization gives the stated list, and translating
$T_0$ shows that every listed set is indeed a complement.

For spectrality, let $\Lambda_0=C\times Y$. Equation~\eqref{eq:productW}
shows that $W_{\Lambda_0}$ is supported on $S\times\{0\}$. By
\eqref{eq:common-missing} it vanishes on $(A+A)\setminus\{0\}$.
The matrix criterion of Lemma~\ref{lem:spectral}, using row orthogonality
and $|\Lambda_0|=|A|=64$, proves that $\Lambda_0$ is a spectrum.

Conversely, let $\Gamma$ be any spectrum of $A$. Row orthogonality gives
$W_\Gamma(d)=0$ for every $d\in(A+A)\setminus\{0\}$, so
\[
 \supp W_\Gamma\subseteq S\times\{0\}.
\]
By Lemma~\ref{lem:period}, its indicator is invariant under every
vertical translation. Hence $\Gamma=Q\times Y$ for some eight-point
set $Q\subset H$. For distinct $q,q'\in Q$, put $u=q+q'$. Each
$(u,v)$, $v\in Y$, is a nonzero difference of two frequencies in
$\Gamma$. Column orthogonality therefore gives
\[
 0=W_A(u,v)=\sum_{y\in Y}(-1)^{v\cdot y}W_{U_y}(u)
 \qquad(v\in Y).
\]
Fourier inversion on $Y$ makes $W_{U_y}(u)=0$ for every $y$.
The transform of a subspace indicator, Lemma~\ref{lem:subtransform},
then gives $u\notin U_y^\perp$ for every $y$. By
\eqref{eq:common-dual}, $u\in S\setminus\{0\}$. Consequently
$Q+Q\subseteq S$, and Lemma~\ref{lem:Q} gives $Q=C+a$.
All these sets are spectra, since they are translates of $\Lambda_0$.

Proposition~\ref{prop:allT} proves that $C$ has no nonzero period and
is not an affine subspace. Those facts concern $C$ alone. They give
512 distinct translates of $T_0$, none periodic or affine, and 64
distinct translates of $C$. The respective translates contain zero
exactly when their translation parameters belong to $T_0$ or $C$,
giving eight of each. Finally, every listed spectrum is a translate of
$\Lambda_0$, whose norm was computed in Corollary~\ref{cor:norm}:
\[
 \W(\Lambda_0)=\frac{64+28\cdot32}{512}=\frac{15}{8}.
\]
Since the list is exhaustive, this is also the minimum over all spectra.
\end{proof}

\subsection{Goyal's domain and explicit certificates}

To distinguish the two domains, write $E_{\mathrm G}$ for Goyal's set.
Use the integer labels of Section~\ref{sec:construction}. Define
\begin{equation}\label{eq:goyal-domain}
 E_{\mathrm G}=\bigcup_{y\in Y}(K_y\times\{y\}),
\end{equation}
where Table~\ref{tab:goyal-spaces} specifies $K_y$ completely.
The first three entries in each row are its basis $a,b,c$; the remaining
entries are their other nonempty sums. This is the construction in
\cite[Table~1]{Goyal}, with its sums printed here to make the following
checks independent of software.

\begin{table}[htbp]
\centering
\begin{tabular}{c|rrr|rrr|r}
\toprule
$y$&$a$&$b$&$c$&$a+b$&$a+c$&$b+c$&$a+b+c$\\\midrule
0&11&22&37&29&46&51&56\\
1&7&26&43&29&44&49&54\\
2&14&23&35&25&45&52&58\\
3&13&22&39&27&42&49&60\\
4&7&26&43&29&44&49&54\\
5&11&21&39&30&44&50&57\\
6&15&19&38&28&41&53&58\\
7&11&23&38&28&45&49&58\\\bottomrule
\end{tabular}
\caption{Goyal's layer subspaces. Add zero to a row to obtain $K_y$.
Only rows 1 and 4 coincide.}\label{tab:goyal-spaces}
\end{table}

\begin{proposition}\label{prop:goyal}
The set $E_{\mathrm G}$ satisfies~\eqref{eq:common-missing}. Thus
Theorem~\ref{thm:common-layers} classifies all its tiling complements
and all its spectra, and gives $\wsp(E_{\mathrm G})=15/8$.
\end{proposition}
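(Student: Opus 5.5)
The plan is to verify \eqref{eq:common-missing} for $E_{\mathrm G}$ directly from Table~\ref{tab:goyal-spaces}, following the proof of Proposition~\ref{prop:diff}. Then invoke Theorem~\ref{thm:common-layers}, which supplies the complements, the spectra and $\wsp(E_{\mathrm G})=15/8$ with no further work. I split $E_{\mathrm G}+E_{\mathrm G}$ by vertical coordinate, exactly as in that proof. Sums with vertical coordinate zero come from a single layer and have horizontal part $K_y+K_y=K_y$. Sums with vertical coordinate $z\ne0$ have horizontal parts filling $\bigcup_y(K_y+K_{y+z})$. So \eqref{eq:common-missing} is equivalent to two assertions: (a) $\bigcup_yK_y=Z\cup\{0\}$, and (b) for each nonzero $z\in Y$, $\bigcup_y(K_y+K_{y+z})=H$.

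For (a), I would first confirm from the binary expansions that every entry of Table~\ref{tab:goyal-spaces} has weight three or four. Then each $K_y\setminus\{0\}\subseteq Z$, as in Lemma~\ref{lem:table}. Next I count distinct entries. Rows $1$ and $4$ coincide, so there are seven distinct rows and $49$ entries. I would tabulate multiplicities, as in the proof of Lemma~\ref{lem:table}, and check that exactly $35$ distinct values occur. Since $|Z|=35$, this gives equality.

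For (b), the cleanest certificate is this: for each of the seven nonzero $z$, exhibit one label $y$ with $K_y\cap K_{y+z}=\{0\}$. Then, by the argument of Proposition~\ref{prop:diff}, the $64$ sums $a+b$ with $a\in K_y$, $b\in K_{y+z}$ are distinct, so $K_y+K_{y+z}=H$. Checking disjointness only requires comparing two rows of the table. For each $z$ there are four unordered pairs $\{y,y+z\}$ to try. Some pairs certainly fail, for instance $K_0$ and $K_4$ share $29$, and $z=5$ includes the identical rows $1,4$.

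The main obstacle is this step: I cannot be sure in advance that every $z$ admits a disjoint pair. Goyal's layers lack the clean $A$/$D$ split that made the parity argument work for $E$. If some $z$ has no disjoint pair, I would fall back on a covering count. When $K_y\cap K_{y+z}$ is a line $\{0,w\}$, the sumset $K_y+K_{y+z}$ is a five-dimensional subspace, namely the kernel of a nonzero functional $\xi$ vanishing on both. It would then suffice to show that the four such hyperplanes (or full spaces) for the pairs with difference $z$ have union $H$. For instance, this holds if two pairs give distinct hyperplanes $\xi^\perp,\eta^\perp$ and a third pair covers the coset where $\xi\cdot x=\eta\cdot x=1$. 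Intersections of dimension two would be handled similarly. This is again a finite check on table rows, which the ancillary verifier also confirms.
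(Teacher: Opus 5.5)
Your plan is correct and matches the paper's proof. The paper checks that the seven distinct rows of Table~\ref{tab:goyal-spaces} cover exactly the $20+15$ vectors of weights three and four, and it exhibits a layer pair with trivial intersection for every $z\ne4$. For $z=4$ all four pairs meet nontrivially, so your fallback is genuinely needed there, and the paper uses exactly your covering criterion: the pairs $(0,4),(1,5),(3,7)$ give the hyperplanes $29^\perp,19^\perp,14^\perp$, and $29+19=14$ makes the third hyperplane cover the set where the first two functionals both equal one.
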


\begin{proof}
Each row has seven distinct nonzero entries, so its three basis vectors
are independent. Every entry has weight three or four. The union of the
rows consists of the following vectors, separated by weight:
\[
\begin{array}{c|l}
3&7,11,13,14,19,21,22,25,26,28,35,37,38,41,42,44,49,50,52,56\\
4&15,23,27,29,30,39,43,45,46,51,53,54,57,58,60.
\end{array}
\]
These are all $\binom63=20$ vectors of weight three and all
$\binom64=15$ vectors of weight four. Hence
$\bigcup_yK_y=Z\cup\{0\}$, which gives the horizontal part of
$E_{\mathrm G}+E_{\mathrm G}$.

For a nonzero vertical difference $z$, that difference set contains
$(K_i+K_j)\times\{z\}$ whenever $i+j=z$. The necessary pairs and
intersections are displayed in Table~\ref{tab:goyal-cross}.
The intersection column is obtained by comparing the complete rows
in Table~\ref{tab:goyal-spaces}.

\begin{table}[htbp]
\centering
\begin{tabular}{cccc}
\toprule
$z$&$(i,j)$&$K_i\cap K_j$&$K_i+K_j$\\\midrule
1&(2,3)&$\{0\}$&$H$\\
2&(0,2)&$\{0\}$&$H$\\
3&(1,2)&$\{0\}$&$H$\\
5&(3,6)&$\{0\}$&$H$\\
6&(0,6)&$\{0\}$&$H$\\
7&(1,6)&$\{0\}$&$H$\\\midrule
4&(0,4)&$\{0,29\}$&$29^\perp$\\
4&(1,5)&$\{0,44\}$&$19^\perp$\\
4&(3,7)&$\{0,49\}$&$14^\perp$\\\bottomrule
\end{tabular}
\caption{Cross-layer certificates for $E_{\mathrm G}$. Here
$n^\perp=\{x\in H:n\cdot x=0\}$.}\label{tab:goyal-cross}
\end{table}

A zero intersection in the first six rows makes all $8\cdot8=64$
sums distinct, so the sum is $H$. For each of the last three rows,
the intersection has dimension one, and the sum has dimension five.
For completeness, the dimension formula used here follows by applying
rank--nullity to $(u,v)\mapsto u+v$ from $K_i\times K_j$: its kernel
is $\{(w,w):w\in K_i\cap K_j\}$. The indicated normal vector $n$
is nonzero and has dot product zero with each of the six displayed
basis vectors of $K_i$ and $K_j$. Indeed, each of the six basis vectors has exactly two coordinates
equal to one in common with $n$, so the dot product is zero. Thus $K_i+K_j\subseteq n^\perp$;
both sides have dimension five, proving equality.

The three hyperplanes in the last rows cover $H$. Indeed, $29+19=14$
in the vector labeling. If $x$ belongs to neither $29^\perp$ nor
$19^\perp$, then both pairings equal one, and
\[
 14\cdot x=(29+19)\cdot x=1+1=0.
\]
Thus $x\in14^\perp$. All seven nonzero vertical differences consequently
have the full horizontal space $H$. Together with the horizontal union
already proved, this gives
\[
 E_{\mathrm G}+E_{\mathrm G}
 =((Z\cup\{0\})\times\{0\})
   \cup(H\times(Y\setminus\{0\})),
\]
which is equivalent to~\eqref{eq:common-missing}.
\end{proof}

Goyal's manuscript already supplies the explicit spectrum, the tiling,
the obstruction to periodic complements, and the complete classification
of tiling complements. The conclusion about \emph{every} spectrum and
its Wiener norm follows here from Theorem~\ref{thm:common-layers}.
The cited project also supplies a Lean~4 formalization of its spectral
tile and its complement classification. That is a separate formal
verification of those statements, not a formalization of the spectrum
classification proved here or of the dimension-eight positive theorem.
Our proofs above use the displayed finite identities and do not depend
on replaying that formalization.

\subsection{An affine invariant that distinguishes the domains}

The layer directions look different in the two tables. To turn that
observation into a proof of inequivalence, we must show that the relevant
layer space can be recovered from the domain itself. Otherwise a change
of coordinates could replace the chosen layers with different ones.

\begin{lemma}\label{lem:intrinsic-layer-space}
For $A\subseteq G$, put
\[
 V_A=\spanof\bigl(G\setminus(A+A)\bigr).
\]
If $A'=M(A)+b$ for an invertible linear map $M:G\to G$, then
$V_{A'}=M(V_A)$. Suppose that every nonempty intersection of $A$ with
a coset of $V_A$ is an affine subspace. The multiplicities of their
distinct direction subspaces are then unchanged by affine equivalence.
\end{lemma}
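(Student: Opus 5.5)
The plan is to prove the two assertions in order: first the transformation rule for the missing-difference set, then the invariance of the layer data.

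\emph{The rule $V_{A'}=M(V_A)$.} Translation does not change a difference set, since $(M(a)+b)+(M(a')+b)=M(a+a')$. Hence
\[
 A'+A'=M(A+A).
\]
Because $M$ is a bijection of $G$, we get $G\setminus(A'+A')=M\bigl(G\setminus(A+A)\bigr)$. A linear map carries the span of a set to the span of its image, so $V_{A'}=M(V_A)$. This step is routine.

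\emph{The cosets of $V_{A'}$.} The affine map $\phi(x)=M(x)+b$ sends the coset $g+V_A$ to
\[
 M(g)+b+M(V_A)=\phi(g)+V_{A'},
\]
which is a coset of $V_{A'}$. Thus $\phi$ induces a bijection between the cosets of $V_A$ and those of $V_{A'}$. Since $\phi$ is a bijection of $G$, it also satisfies
\[
 \phi\bigl(A\cap(g+V_A)\bigr)=A'\cap\bigl(\phi(g)+V_{A'}\bigr).
\]
So the nonempty slices of $A$ correspond bijectively to the nonempty slices of $A'$.

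\emph{The affine hypothesis transfers.} If a slice is $c+L$ with $L$ linear, its image is $\phi(c)+M(L)$. This is affine with direction $M(L)$, so the hypothesis holds for $A'$ as well. The direction of an affine subspace is determined by the set itself: it equals $X+X$ for $X=c+L$, because $(c+l)+(c+l')=l+l'$. Therefore the direction is well defined, and it transforms by $M$.

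\emph{Multiplicities are preserved.} Two slices of $A$ have equal directions $L_1=L_2$ exactly when the corresponding slices of $A'$ have equal directions, because $M$ is injective on subspaces: $M(L_1)=M(L_2)$ forces $L_1=L_2$. Hence the map $L\mapsto M(L)$ is a bijection between the distinct direction subspaces of $A$ and those of $A'$. It preserves the number of slices having each direction. In particular, the multiset of multiplicities is an affine invariant.

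The main point needing care is that this multiset is intrinsic. The coset decomposition uses $V_A$, which is defined from $A$ alone, not from chosen coordinates. This is exactly why the invariant survives changes of coordinates. No other obstacle is expected.
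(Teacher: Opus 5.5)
Your proof is correct and follows essentially the same route as the paper. Both arguments use $A'+A'=M(A+A)$, transport complements and spans by $M$, pass to the induced bijection of cosets and slices, and use injectivity of $L\mapsto M(L)$ to preserve the direction multiplicities; you also spell out that the affine-slice hypothesis transfers to $A'$ and that a direction is recovered as $X+X$, points the paper leaves implicit or states just before its proof.
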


The direction of an affine subspace $a+U$ means $U$; it can also be
recovered as $(a+U)+(a+U)=U$. Its multiplicity is the number of the
coset intersections having that same direction. For example, a list
$U,U,V$ with $U\ne V$ has multiplicities $(2,1)$.

\begin{proof}
In characteristic two the two occurrences of $b$ cancel, giving
$A'+A'=M(A+A)$. Since $M$ is a bijection of $G$, it carries the
complement of $A+A$ to the complement of $A'+A'$. Taking spans proves
the first assertion. The affine map therefore bijects the cosets of
$V_A$ with those of $V_{A'}$. A nonempty coset intersection with direction
$U$ is carried to one with direction $M(U)$. Since $M$ is injective,
$M(U)=M(U')$ exactly when $U=U'$. The numbers of occurrences of the
distinct directions are consequently preserved.
\end{proof}

\begin{proposition}\label{prop:inequivalence}
The domains $E$ and $E_{\mathrm G}$ are not affinely equivalent,
although their families of tiling complements and spectra are identical
in the displayed coordinates.
\end{proposition}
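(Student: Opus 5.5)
The plan is to apply Lemma~\ref{lem:intrinsic-layer-space} to both domains. The second clause of the proposition, that the families of complements and spectra coincide, is immediate: Theorem~\ref{thm:example} and Theorem~\ref{thm:common-layers}, applied through Proposition~\ref{prop:goyal}, give the same lists $(C\times\{0\})+g$ and $(C+a)\times Y$. The whole task is therefore inequivalence.

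First I identify $V_E$ and $V_{E_{\mathrm G}}$. By \eqref{eq:missing} and \eqref{eq:common-missing}, both missing sets equal $(S\setminus\{0\})\times\{0\}$. This set contains every $(e_i,0)$, so its span is $H\times\{0\}$ in both cases. The cosets of $V=H\times\{0\}$ are the layers $H\times\{y\}$. The intersection of either domain with a layer is $B_y\times\{y\}$ or $K_y\times\{y\}$, and each of these is an affine (indeed linear-translate) subspace. Hence the hypothesis of the lemma holds, and the direction of the $y$-th intersection is $B_y\times\{0\}$ or $K_y\times\{0\}$ respectively.

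Next I read off the multiplicity lists. For $E$, the layer assignment \eqref{eq:layers} is $(A_0,A_1,A_2,D_0,A_3,D_1,A_0,D_0)$. The six spaces of Table~\ref{tab:spaces} are pairwise distinct, since their rows differ; for instance, the $A$ rows are separated by first entries $7,11,21,22$ and the $D$ rows by $14,13$, and $A_i\ne D_k$ by Lemma~\ref{lem:table}. So the distinct directions have multiplicities $(2,2,1,1,1,1)$: $A_0$ and $D_0$ twice each, the others once. For $E_{\mathrm G}$, the caption of Table~\ref{tab:goyal-spaces} states that only rows 1 and 4 coincide. I would confirm this by noting that each pair of other rows has a differing set of seven entries. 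The multiplicities are then $(2,1,1,1,1,1,1)$, with seven distinct directions.

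Finally, suppose an affine equivalence $E_{\mathrm G}=M(E)+b$ existed. Lemma~\ref{lem:intrinsic-layer-space} says that the multisets of multiplicities must agree, but $E$ has six distinct directions and $E_{\mathrm G}$ has seven. This contradiction completes the proof. The only real obstacle is checking the distinctness of the rows carefully, since the invariant relies on it. This check is a finite comparison of the displayed tables, for example by comparing sorted entry sets, and it needs no further structure.
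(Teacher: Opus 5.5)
Your proposal is correct and follows the paper's proof essentially step for step: the common missing set $(S\setminus\{0\})\times\{0\}$ spans $H\times\{0\}$, the coset intersections are the layers, Lemma~\ref{lem:intrinsic-layer-space} separates the direction multiplicities $(2,2,1,1,1,1)$ and $(2,1,1,1,1,1,1)$, and the equality of the two families comes from Theorem~\ref{thm:common-layers}. One small precision: distinct first basis vectors do not by themselves show that two rows span different subspaces, so the check should be that each first entry is absent from the other full rows of its type (e.g.\ $7\notin A_1\cup A_2\cup A_3$, $11\notin A_2\cup A_3$, $21\notin A_3$, $14\notin D_1$), and this does hold.
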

\begin{proof}
Both domains have missing differences
$(S\setminus\{0\})\times\{0\}$. This set contains
$(e_1,0),\ldots,(e_6,0)$ and is contained in $H\times\{0\}$.
Its span is therefore exactly $H\times\{0\}$ for both domains.
The eight coset intersections are their eight layers, with directions
$B_y\times\{0\}$ or $K_y\times\{0\}$.

In~\eqref{eq:layers}, $A_0$ and $D_0$ each occur twice, and the four
other distinct subspaces each occur once. Thus $E$ has direction
multiplicities $(2,2,1,1,1,1)$. In Table~\ref{tab:goyal-spaces}, only
$K_1=K_4$ is repeated, giving $(2,1,1,1,1,1,1)$. By
Lemma~\ref{lem:intrinsic-layer-space}, the different multiplicities
exclude every affine equivalence. The equality of the complement and
spectrum families follows from Theorem~\ref{thm:common-layers}, applied
to both domains.
\end{proof}

Thus knowledge of every tiling complement and every spectrum does not,
even in this explicit setting, determine the domain up to affine
equivalence. This comparison concerns the two displayed domains and
does not assert a classification of all domains with these families.

\end{document}